\newtheorem{theorem}{Theorem}[section]
\newtheorem{proposition}[theorem]{Proposition}
\newtheorem{lemma}[theorem]{Lemma}
\newtheorem{corollary}[theorem]{Corollary}
\theoremstyle{definition}
\newtheorem{definition}[theorem]{Definition}
\newtheorem{remark}[theorem]{Remark}
\newcommand{\desu} [2] []
{\dfrac {\partial #1} {\partial #2}}
\newcommand{\lra}{\longrightarrow}
\newcommand{\Thetatwo}{{\mathbf L}}
\newcommand{\diffmap}{\phi}
\newcommand{\lb}{K_S(2\Delta)}
\newcommand{\tphi}{\widetilde\phi}
\newcommand{\one}{\alpha}
\newcommand{\due}{\beta}
\newcommand{\three}{\gamma}
\newcommand{\isom}{\Psi}
\newcommand{\thetacar}{F}
\newcommand{\tras}{\mathcal{T}}
\renewcommand{\Im}{\operatorname{Im}}
\numberwithin{equation}{section}
\begin{document}

\baselineskip=15pt

\title[Theta functions and projective structures]{Theta functions and projective structures}

\author[I. Biswas]{Indranil Biswas}

\address{Department of Mathematics, Shiv Nadar University, NH91, Tehsil
Dadri, Greater Noida, Uttar Pradesh 201314, India}

\email{indranil.biswas@snu.edu.in, indranil29@gmail.com}

\author[A. Ghigi]{Alessandro Ghigi}
	
\address{Dipartimento di Matematica, Universit\`a di Pavia, via
Ferrata 5, I-27100 Pavia, Italy}
	
\email{alessandro.ghigi@unipv.it}
	
\author[L. Vai]{Luca Vai}
	
\address{Dipartimento di Matematica, Universit\`a di Pavia, via
Ferrata 5, I-27100 Pavia, Italy}
	
\email{luca.vai01@universitadipavia.it}

\subjclass[2010]{14H10, 14H42, 14K25, 53B10}

\keywords{Theta function, canonical bidifferential, projective structure, Fay's Trisecant Formula}

\date{}

\begin{abstract} 
Given a compact Riemann surface $X$, we consider the line, in the space of
holomorphic sections of $2\Theta$ on $J^0(X)$, orthogonal to all the
sections that vanish at the origin. This line produces a natural
meromorphic bidifferential on $X\times X$ with a pole of order two
on the diagonal. This bidifferential is extensively investigated. In
particular we show that it produces a projective structure on $X$
which is different from the standard ones.
\end{abstract}

\maketitle

\tableofcontents

\section{Introduction}\label{se1}

Let $X$ be a compact connected Riemann surface, and let
$\Delta\, \subset\, S\,:=\,X\times X$ be the reduced diagonal divisor.
Holomorphic and meromorphic sections of $K_S \,=\, \Omega^2_S$ provide a
tool to study holomorphic 1-forms and Hodge theory on $X$. Especially
useful are the so-called \emph{bidifferentials}, i.e., meromorphic
sections of $K_S$ whose polar divisor contains no subset of the form
$\{x\}\times X$ or $X\times \{x\}$. Many bidifferentials have in fact
polar divisor equal to $2\Delta$, hence can be seen as holomorphic
sections of $K_S(2\Delta)$. Recently, sections of $K_S(2\Delta)$ have been
used to describe projective structures on $X$, see \cite {BR1,
BR,Tyu}. Several bidifferentials were known
classically \cite{Fay,Gu2,Tyu}, but most of them are not intrinsic, in
the sense that they depend on some choice: a symplectic basis of
$H_1(X,\,\mathbb Z)$ or a theta characteristic. Other bidifferentials
are intrinsic but are not defined for every $X$ (this is the case for
Klein's bidifferential $\omega_X$; see \cite[p. 22]{Fay} and \cite[p. 185]{Tyu}).

One classically known bidifferential, that we will denote by $\eta_X$, is important in the
differential-geometric study of the Torelli map $j\,:\, M_g \,\longrightarrow\, A_g$. This map is an
embedding outside the hyperelliptic locus $HE_g$. So $J\,:=\,j (M_g - HE_g)$ is
a submanifold of $A_g$ in the orbifold sense. As $A_g$ is provided
with the Siegel metric, it is natural to consider the extrinsic
differential geometry of this submanifold $J \,\subset\, A_g$. After
appropriate identifications, the second fundamental form of this
submanifold coincides with the multiplication by $\eta_X$; see
\cite{EPG} and \cite{cpt}. See also \cite{Gu2} and \cite{Lo} for
related results. Although the section $\eta_X$ is constructed using
just basic Hodge theory, its construction is not completely
straightforward. One starts working on $X \times \{x\}$ for any
$x\,\in\, X$, then one glues everything together to a section on $X\times X$.

 Anyway $\eta_X$ turns out to be completely intrinsic in the sense
 that its construction does not require making any choice. Moreover, by
 \cite{BR}, the bidifferential $\eta_X$ produces a projective structure
 on $X$, which is also completely intrinsic. This projective structure has been
 studied in \cite{bcfp}, where it is shown that it is actually different from
 the projective structure obtained by uniformization.

 The main goal of the present paper is to introduce another completely
 intrinsic bidifferential, that we will denote by $\sigma_X$. Its
 construction is cleaner than that of $\eta_X$, in the sense that one
 works from the beginning on the Jacobian of $X$ and on $X\times X$.
 Nevertheless the two constructions have a similar flavor, and at the
 beginning we expected that $\eta_X \,=\, \sigma_X$ for any $X$. Our main
 result is that in fact they differ for general $X$, at least for
 genus $1$.

 The section $\sigma_X$ is constructed using theta functions. There is
 a canonical theta divisor on $J^{g-1}(X)$, where
 $g\,=\, \text{genus}(X)$. Using this divisor it can be shown that
 there is a canonical line bundle $2\Theta$ on the Jacobian $J^0(X)$.
 The vector space $H^0(J^0(X),\, 2\Theta)$ admits a Hermitian inner product
 which is intrinsic up to a constant scalar multiple. Since $2\Theta$ is base
 point-free, $H^0(J^0(X),\, 2\Theta)$ contains a unique line
 $\mathbb S$ which is orthogonal to all sections vanishing at the origin
 ${\mathcal O}_X\, \in\, J^0(X)$. This is the content of Section
 \ref{section:one}.

 Next let $\phi\,:\, S\,=\,X\times X \,\lra\, J^0(X)$ be the
 difference map $\phi(x,y) \,:=\, \mathcal O_{X}(x-y)$. Using the
 formalism of determinant bundles we show in Proposition \ref {prop1}
 that the pullback $\phi^*(2\Theta)$ is isomorphic to
 $K_{S }(2\Delta)$; see Remark \ref{pr-re} for an elegant proof of the proposition
communicated by the referee. (Another proof is described later in the paper.)
 The bidifferential $\sigma_X$ is by definition the unique element of
 the line $\phi^*\mathbb S$ with biresidue 1 along the diagonal
 (the line $\mathbb S$ as defined above). We close Section \ref{section:two}
 recalling several results from a paper \cite{vgvg} by van Geemen and
 van der Geer which describe both the kernel and the image of the map
\begin{gather}
\phi^*\,\,:\,\, H^0(J^0(X),\,2\Theta) \,\,\lra\,\, H^0(S ,\,
\Omega_{S}^2 (2\Delta)).
\label{pbintro}
\end{gather}

The next step (comprising Sections \ref {sec:pullback:special} and
\ref{section:5}) is a rather explicit study of the map $\phi^*$ using
the classical formalism of theta functions, as in e.g. \cite{Fay,Gu3}.
We first concentrate on a special class of sections $s_\zeta$ of
$ H^0(J^0(X),\,2\Theta) $. We are able to compute the divisor of $\phi^*s_\zeta$. Among other
things, this yields another proof of Proposition \ref{prop1} which
says that $\phi^*(2\Theta) $ is isomorphic to $K_{S}(2\Delta)$ (see
Proposition \ref{prop:omega:zeta}).

Next we compute $\phi^*(s)$ for arbitrary sections
$s \in H^0(J^0(X),\,2\Theta)$. To pass from the analysis of
$\phi^*(s_\zeta)$ to the general case of $\phi^*(s)$ we rely on a deep
result: Fay's Trisecant Identity. We start by differentiating twice
this identity. After some nontrivial computations we arrive at a very
explicit formula for the pullback map \eqref{pbintro} (see Proposition
\ref{prop:operativepullback}). This formula is inspired by a result in
\cite{Gu3}. It is crucial for the rest of the paper. This is the
content of Section \ref{section:5}.

The final Section \ref {section:6} compares the above mentioned
sections $\sigma_X$ and $\eta_X$. We start by recalling briefly the
construction of $\eta_X$ given in \cite{EPG}. It should be mentioned
that this bidifferential had been studied earlier, at least by Gunning
\cite{Gu2}. In fact, the characterization of $\eta_X$ given by Gunning
is very useful for the problem at hand. After some work we deduce from
it a system of equations (real analytic in the period matrix) which
describe the locus where the sections $\eta_X$ and $\sigma_X$
coincide. It is shown that these two sections are different in general
(Theorem \ref{final}). This is proved by considering the special case
of $g\,=\,1$. It is natural to expect that the two projective
structures differ also for the generic point in $M_g$ for any
$g$. Nevertheless it seems hard to generalize our proof to higher
genera, on the one hand because the computations with theta functions
become difficult, on the other hand because the generic period matrix
does not correspond to a Jacobian as soon as we have $g \geq 4$.

Since $\sigma_X$ is an intrinsic bidifferential with biresidue 1, it
gives rise to a new intrinsic projective structure on $X$. Our last
result shows that in general this structure is different from the one
produced by $\eta_X$. Indeed we show that in the special case of
$g\,=\,1$ the two projective structures are different for a generic
point $[X]\in M_{1,1}$ (see Corollary \ref{cor-d}). Corollary
\ref{cor-d} is actually proved using Theorem \ref{final}.

Intrinsic projective structures on compact Riemann surfaces fit nicely
in the picture put forward in \cite{BFPT,BGT} (using the result of
\cite{fpt}) and in fact they are the motivation for that picture. As
shown in those papers there are bijections between three objects:
\begin{enumerate}
\item  smooth global sections of the
bundle of projective structures over the moduli space of curves,

\item connections on the dual of the Hodge line bundle over the moduli
space of curves, and

\item closed $(1,1)$-forms on the moduli space of curves representing a particular
Dolbeault cohomology class.
\end{enumerate}
(See the Introduction to \cite{BGT} and Theorem 3.5 in the same
paper.) The projective structure induced by $\sigma_X$ gives rise to
a new element of (1). It would be very interesting to identify the corresponding
elements of the sets (2) and (3). In other words, it would be
interesting to characterize the projective structure obtained from
$\sigma_X$ in terms of its $\bar{\partial}$, as is done in \cite
{BFPT} for the projective structure induced by the uniformization and
in \cite {BGT} for the projective structure induced by $\eta_X$. We
hope to come back to these questions in the future.

\section{A line of sections}\label{section:one}

Let $X$ be a compact connected Riemann surface. The holomorphic
cotangent bundle of $X$ will be denoted by $K_X$. For any
$d\, \in\, {\mathbb Z}$, let $J^d(X)$ denote the component of the
Picard group of $X$ that parametrizes the isomorphism classes of all
holomorphic line bundles on $X$ of degree $d$. Let $g$ be the genus of
$X$. Then $J^{g-1}(X)$ has a natural reduced effective divisor
\begin{equation}\label{e1}
\Theta\, :=\, \{L\, \in\, J^{g-1}(X)\, \big\vert\,\, H^0(X,\, L)\, \not=\, 0\}
\, \subset\, J^{g-1}(X)\, ,
\end{equation}
which is known as the \textit{theta divisor}. For any
$L\, \in\, J^{g-1}(X)$ the Riemann--Roch theorem says that
$\dim H^0(X,\, L)-\dim H^1(X,\, L)\,=\, 0$, and hence any
$L\, \in\, J^{g-1}(X)$ lies in $\Theta$ if and only if
$H^1(X,\, L)\, \not=\, 0$.

Take a theta characteristic $\thetacar$ on $X$. So $\thetacar$ is a
holomorphic line bundle on $X$ of degree $g-1$ such that
$\thetacar\otimes\thetacar\,=\, K_X$. Consider the holomorphic
isomorphism
\begin{equation}\label{e2}
\tras_{\thetacar}\, :\, J^{0}(X)\, \longrightarrow\, J^{g-1}(X)\, ,\ \
L\, \longmapsto\, L\otimes\thetacar\, .
\end{equation}
Let
\begin{equation}\label{e3}
\Theta_{\thetacar}\, :=\, \tras^{-1}_{\thetacar}(\Theta) \, \subset\, J^{0}(X)
\end{equation}
be the inverse image of the divisor $\Theta$ defined in
\eqref{e1}. Define the involution
\begin{equation}\label{io}
\iota\, :\, J^{0}(X)\, \longrightarrow\, J^{0}(X)\, ,\ \ L\, \longmapsto\, L^*\, .
\end{equation}

The following lemma is well-known. Its proof below was supplied by a referee.

\begin{lemma}\label{lem1}\mbox{}
\begin{enumerate}
\item The divisor $\Theta_{\thetacar}$ in \eqref{e3} satisfies the
equality
$$
\iota (\Theta_{\thetacar})\,=\, \Theta_{\thetacar}.
$$

\item The isomorphism class of the holomorphic line bundle on $J^0(X)$
$$
{\mathcal O}_{J^{0}(X)}(\Theta_{\thetacar} +
\iota(\Theta_{\thetacar})) \,=\, {\mathcal O}_{J^{0}(X)}(2\Theta_{\thetacar})
$$
is actually independent of the choice of the theta characteristic $\thetacar$.
\end{enumerate}
\end{lemma}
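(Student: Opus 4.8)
The plan is to prove the two assertions separately, treating part (1) first and then deducing part (2) from it together with a standard fact about the ambiguity of theta characteristics.

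For part (1), I would use the two classical facts about the canonical theta divisor $\Theta\subset J^{g-1}(X)$: first, that it is a translate of the Abel--Jacobi image $W_{g-1}$ of the $(g-1)$-fold symmetric product, and more relevantly, that the Riemann--Roch theorem gives a symmetry for $\Theta$ under the involution $L\mapsto K_X\otimes L^*$ on $J^{g-1}(X)$. Concretely, for $L\in J^{g-1}(X)$ one has $h^0(X,L)=h^1(X,L)=h^0(X,K_X\otimes L^*)$, so $L\in\Theta$ if and only if $K_X\otimes L^*\in\Theta$; that is, $\Theta$ is invariant under the involution $\kappa\colon L\mapsto K_X\otimes L^*$. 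Now I would simply chase this through the isomorphism $\tras_\thetacar$. If $M\in J^0(X)$, then $\tras_\thetacar(\iota(M))=M^*\otimes\thetacar$, while $\kappa(\tras_\thetacar(M))=K_X\otimes(M\otimes\thetacar)^*=K_X\otimes M^*\otimes\thetacar^*=\thetacar\otimes M^*$, using $\thetacar\otimes\thetacar=K_X$. Hence $\tras_\thetacar\circ\iota=\kappa\circ\tras_\thetacar$, and since $\kappa(\Theta)=\Theta$ we get $\iota(\Theta_\thetacar)=\iota(\tras_\thetacar^{-1}(\Theta))=\tras_\thetacar^{-1}(\kappa(\Theta))=\tras_\thetacar^{-1}(\Theta)=\Theta_\thetacar$, as divisors.

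For part (2), let $\thetacar'$ be another theta characteristic. Then $\thetacar'=\thetacar\otimes\xi$ for a unique $2$-torsion point $\xi\in J^0(X)[2]$, since $(\thetacar'\otimes\thetacar^*)^{\otimes 2}=K_X\otimes K_X^*=\mathcal O_X$. The two maps $\tras_\thetacar$ and $\tras_{\thetacar'}$ differ by the translation $t_\xi\colon L\mapsto L\otimes\xi$ on $J^0(X)$: precisely $\tras_{\thetacar'}=\tras_\thetacar\circ t_\xi$. Therefore $\Theta_{\thetacar'}=t_\xi^{-1}(\Theta_\thetacar)=t_{-\xi}(\Theta_\thetacar)=t_\xi(\Theta_\thetacar)$, the last equality because $\xi$ is $2$-torsion. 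So passing from $\thetacar$ to $\thetacar'$ translates the divisor $\Theta_\thetacar$ by a $2$-torsion point $\xi$. It remains to see that $\mathcal O_{J^0(X)}(\Theta_\thetacar+\iota(\Theta_\thetacar))$ is unchanged under such a translation. Using part (1), $\iota(\Theta_\thetacar)=\Theta_\thetacar$, so the line bundle in question is $\mathcal O_{J^0(X)}(2\Theta_\thetacar)$. Under $t_\xi$ this pulls back to $\mathcal O_{J^0(X)}(2\,t_\xi(\Theta_\thetacar)) = t_{-\xi}^*\mathcal O_{J^0(X)}(2\Theta_\thetacar)$, and by the theorem of the square $t_\eta^*\mathcal O(D)\otimes\mathcal O(D)^{-1}$ depends linearly (via the polarization $\lambda$) on $\eta$; applied to $D=2\Theta_\thetacar$ and $\eta$ a $2$-torsion point it gives $t_\eta^*\mathcal O(2\Theta_\thetacar)\cong\mathcal O(2\Theta_\thetacar)$ because $2\Theta_\thetacar$ is an even multiple and $2\eta=0$. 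Hence the isomorphism class is independent of $\thetacar$.

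The main obstacle, such as it is, is the very last step: one must be careful that invariance of the \emph{divisor class} (not merely the linear equivalence class with a fixed polarization) really does follow, i.e.\ that the relevant translation acts trivially on $\mathrm{Pic}(J^0(X))$ at the level of the class $[2\Theta_\thetacar]$. This is exactly where the factor of $2$ and the $2$-torsion of $\xi$ conspire: writing $\phi_{\mathcal O(\Theta_\thetacar)}\colon J^0(X)\to J^0(X)^\vee$ for the polarization homomorphism, one has $t_\xi^*\mathcal O(\Theta_\thetacar)\otimes\mathcal O(\Theta_\thetacar)^{-1}\cong \phi_{\mathcal O(\Theta_\thetacar)}(\xi)\in J^0(X)^\vee$, a $2$-torsion element, and squaring it (because we have $2\Theta_\thetacar$) kills it. I would state this cleanly via the theorem of the square rather than computing with theta functions with characteristics, since that keeps the argument choice-free and short; the theta-function bookkeeping is exactly what the lemma is designed to avoid.
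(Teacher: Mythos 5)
Your proposal is correct and follows essentially the same route as the paper: part (1) via Serre duality (plus $\chi(L)=0$ in degree $g-1$) conjugated through $\tras_{\thetacar}$, and part (2) via the Theorem of the Square, where the factor $2$ in $2\Theta_{\thetacar}$ kills the $2$-torsion discrepancy $\xi=\thetacar'\otimes\thetacar^*$. You merely spell out the details the paper leaves implicit; no substantive difference.
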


\begin{proof}
Statement (1) follows immediately from the Serre duality and the fact that
$\thetacar\otimes\thetacar\,=\, K_X$. (A more general result
can be found in \cite[Theorem 11.2.4]{lange-birkenhake}.)

Take another theta characteristic ${\thetacar}'$ on $X$. So $\xi\, :=\, {\thetacar}'\otimes {\thetacar}^*$
is a holomorphic line bundle on $X$ of order two. The second statement follows from the
Theorem of the Square, \cite{Mu}, which says that
$$
{\mathcal O}_{J^{0}(X)}(\Theta_{{\thetacar}'})\otimes {\mathcal O}_{J^{0}(X)}(\Theta_{{\thetacar}'})
\, =\, {\mathcal O}_{J^{0}(X)}(\mathcal{T}^*_{\xi^2} \Theta_{\thetacar})\otimes
{\mathcal O}_{J^{0}(X)}(\Theta_{\thetacar})\,=\, {\mathcal O}_{J^{0}(X)}(2\Theta_{\thetacar}),
$$
where $\mathcal{T}_{\xi^2} \, :\, J^{0}(X)\,\longrightarrow\, J^{0}(X)$ is the translation by $\xi^{\otimes 2}$
(so it is the identity map of $J^0(X)$).
\end{proof}

The holomorphic line bundle
${\mathcal O}_{J^{0}(X)}(\Theta_{\thetacar} +
\iota(\Theta_{\thetacar})) \,=\, {\mathcal
O}_{J^{0}(X)}(2\Theta_{\thetacar})$ on $J^{0}(X)$ will be denoted by
$\mathbf L$.

The complex vector space
\begin{equation}\label{e7}
\mathbb{V}\, \,=\,\, H^0(J^{0}(X),\, {\mathbf L})
\end{equation}
is of dimension $2^g$, because $c_1(\Theta_F)$ is a principal
polarization. It has a Hermitian structure which is unique up to
multiplication by a constant scalar; in other words, $\mathbb{V}$ has
a canonical conformal class of Hermitian structures. We will briefly
recall a construction of this conformal class of Hermitian structures.

The cup product
$H^1(X,\, {\mathbb Z})\otimes H^1(X,\, {\mathbb Z})\,
\longrightarrow\, H^2(X,\, {\mathbb Z})\,=\, \mathbb Z$ on $X$
produces a K\"ahler form
\begin{equation}\label{kf}
\omega_0\, \in\, \Omega^{1,1}(J^{0}(X))
\end{equation}
on $J^{0}(X)$. We have $c_1({\mathbf L})\,=\, 2[\omega_0]$. Fix a
Hermitian structure $h$ on ${\mathbf L}$ such that the curvature of
the Chern connection on
\begin{equation}\label{kf2}
({\mathbf L},\, h)
\end{equation}
coincides with $2\omega_0$. Any two such Hermitian structures on
$\mathbf L$ differ by multiplication by a constant scalar. Then we
have the $L^2$ metric on the vector space $\mathbb{V}$ (in \eqref{e7})
defined by
\begin{equation}\label{kf3}
\langle s,\, t\rangle\,=\, \int_{J^{0}(X)} \langle s,\, t\rangle_h\cdot \omega^{g}_0
\end{equation}
for $s,\, t\, \in\, \mathbb{V}$. Since any two choices of $h$ differ
by multiplication by a constant scalar, the conformal class of the
$L^2$ metric on $\mathbb{V}$ in \eqref{kf3} does not depend on the
choice of $h$.

Next we note that the complete linear system $\big\vert{\mathbf L}\big\vert$ is
base-point free, meaning for every point $z\, \in\, J^{0}(X)$, there
is a section $s_z\, \in\,\mathbb{V}$ such that $s_z(z)\, \not=\, 0$
\cite[p. 60, Application 1(iii)]{Mu}. Let
\begin{equation}\label{v0}
V_0\, \subset\, \mathbb{V}
\end{equation}
be the subspace of codimension one defined by all sections that vanish
at the identity element $0\,=\, [{\mathcal O}_X]\, \in\, J^{0}(X)$. We
have its orthogonal complement
\begin{equation}\label{e8}
{\mathbb S}\,\, :=\,\, V^\perp _0\,\, \subset\,\, \mathbb{V}\, .
\end{equation}
Note that $\mathbb S$ is identified with the fiber ${\mathbf L}_0$, of
$\mathbf L$ over $0$, by the evaluation map that sends any
$s\,\in\, {\mathbb S}\, \subset\, H^0(J^{0}(X),\, {\mathbf L})$ to its
evaluation $s(0)$ at $0$.

\section{The section $\sigma_X$}\label{section:two}

As done in the introduction, set
\begin{gather*}
S\,\,:=\,\,X\times X.
\end{gather*}
For $i\,=\,1,\, 2$, let
\begin{equation}\label{ep}
p_i\, :\, X\times X\, \longrightarrow\, X
\end{equation}
be the natural projection to the $i$-th factor. Let
$$
\Delta\, :=\, \{(x,\, x)\, \in\, S\, \mid\,\, x\, \in\, X\}\,
\subset\, S,
$$
be the reduced diagonal divisor.

Let
\begin{equation}\label{e9}
\phi\, :\, X\times X\, \longrightarrow\, J^{0}(X)
\end{equation}
be the holomorphic map defined by
$(x,\, y)\, \longmapsto\, {\mathcal O}_X(x-y)$.

\begin{proposition}\label{prop1}
The pullback $\phi^*{\mathbf L}$ of the line bundle in \eqref{e7} by
$\phi$ in \eqref{e9} satisfies the following:
$$
\phi^*{\mathbf L}\,=\, (p^*_1 K_X)\otimes (p^*_2 K_X)\otimes{\mathcal
O}_{S}(2\Delta)\, .
$$
\end{proposition}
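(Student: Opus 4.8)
The plan is to compute both sides of the claimed isomorphism as line bundles on $S = X \times X$ and check they agree, ideally by exhibiting a canonical isomorphism via the theory of determinant bundles. First I would recall the description of $\mathbf L$ on $J^0(X)$ in terms of a determinant line bundle: if $\mathcal P \to X \times J^0(X)$ is a Poincar\'e bundle (suitably normalized), then $2\Theta_{\thetacar}$ is, up to a translation that doesn't affect the isomorphism class, the determinant of cohomology $\det Rp_{2*}(\mathcal P \otimes p_1^*\thetacar)^{\otimes(-1)}$ raised to the appropriate power; in fact since $\chi(\thetacar) = 0$, the line bundle $\det Rp_{2*}(\mathcal P \otimes p_1^* \thetacar)$ has first Chern class equal to the principal polarization, and its inverse is $\mathcal O(\Theta_{\thetacar})$ (after a translation). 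Pulling back along $\phi$ amounts to replacing the universal family by the family of line bundles $\mathcal O_X(x - y) \otimes \thetacar$ over $(x,y) \in S$, i.e. working with the line bundle $\mathcal L := (\mathrm{id}_X \times \phi)^* \mathcal P$ on $X \times S$, which can be written concretely as $\mathcal O_{X \times S}(\Gamma_1 - \Gamma_2) \otimes q^* \thetacar$ where $\Gamma_1, \Gamma_2 \subset X \times S$ are the two "diagonal-type" divisors $\{(x,x,y)\}$ and $\{(x,y,x)\}$ and $q : X\times S \to X$ is the projection.

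Next I would use the standard base-change / Whitney-type formula for how the determinant of cohomology behaves under elementary modifications along these divisors. Concretely, twisting $\mathcal L$ down by $\mathcal O(-\Gamma_1)$ and $\mathcal O(-\Gamma_2)$ changes the determinant of cohomology on $S$ by the determinant of the restriction of $\mathcal L$ (or $\thetacar$) to $\Gamma_1$ and $\Gamma_2$; since $\Gamma_i$ maps isomorphically to $S$ and the restriction of $\mathcal O_{X \times S}$ things is computed by the normal bundle of the diagonal in $X \times X$, each such modification contributes a factor of the form $p_i^* K_X \otimes \mathcal O_S(\pm \Delta)$. Tracking the two modifications (one for $\Gamma_1$, one for $\Gamma_2$) and remembering we want $\mathbf L = (2\Theta_{\thetacar})$, i.e. the $(-2)$-nd power of the determinant of cohomology of the twisted Poincar\'e bundle, should assemble precisely into $p_1^* K_X \otimes p_2^* K_X \otimes \mathcal O_S(2\Delta)$; the contribution of $\thetacar$ itself cancels because $\thetacar^{\otimes 2} = K_X$ enters with the right multiplicity, and the translation ambiguity in the determinant description is harmless since we only claim an isomorphism of line bundles, not a canonical one.

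A cleaner alternative, which I would at least sketch, is to bypass determinant bundles entirely: restrict to a slice $X \times \{y\}$, where $\phi$ becomes the Abel--Jacobi-type embedding $x \mapsto \mathcal O_X(x - y)$, and use the classical fact that the pullback of $\mathcal O(\Theta_{\thetacar})$ along $x \mapsto \mathcal O_X(x) \otimes (\thetacar \otimes \mathcal O_X(-y))$ is $K_X(y + \ast)$-type; more precisely the pullback of $\mathbf L = \mathcal O(2\Theta_{\thetacar})$ to each slice is $K_X(2y)$ up to something, and by symmetry the same on the other slices, so the seesaw principle forces $\phi^* \mathbf L$ to be $p_1^* K_X \otimes p_2^* K_X \otimes \mathcal O_S(2\Delta)$ once the coefficient of $\Delta$ is pinned down by restricting to a single slice. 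The main obstacle in either approach is bookkeeping the signs and the exact powers: one must be careful that $\mathbf L$ is the \emph{square} of the principally polarizing bundle (factor of $2$ everywhere), that the determinant of cohomology of $\mathcal P \otimes p_1^*\thetacar$ has Euler characteristic zero so its $c_1$ is the polarization with no translation correction to $c_1$, and that the normal-bundle computation along $\Delta \subset X \times X$ correctly produces $K_X$ (not $K_X^{-1}$) with the stated sign of the $\Delta$-coefficient. I expect the determinant-bundle computation with the elementary modifications along $\Gamma_1$ and $\Gamma_2$ to be the technical heart, and the seesaw argument to serve as a reassuring cross-check.
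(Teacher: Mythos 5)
Your main determinant-of-cohomology argument is essentially the paper's own proof of Proposition \ref{prop1}: the paper identifies $\mathcal{O}(\Theta)$ with the (dual of the) determinant bundle and performs exactly your two elementary modifications -- fiberwise, via the short exact sequences twisting by $x$ and by $-y$, with the Poincar\'e adjunction formula supplying the normal-bundle factor -- so your family version along $\Gamma_1,\Gamma_2$ is just a relativization of the same computation (note only that the $\thetacar$ factors do not cancel but combine with the $TX$ factors to yield the $K_X$'s, via $\thetacar^{\otimes 2}=K_X$). Your ``cleaner alternative'' via restriction to slices and see--saw is also in the paper verbatim: it is the referee's proof recorded in Remark \ref{pr-re}.
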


\begin{proof}
Take any Poincar\'e line bundle ${\mathbb L}\, \longrightarrow\, X\times 
J^{g-1}(X)$. Let $\Phi\, :\, X\times J^{g-1}(X)\, \longrightarrow\, J^{g-1}(X)$
be the natural projection. Consider the line bundle
$$
\mathbb{D}(\mathbb{L})\ :=\ (\det \Phi_*\mathbb{L}) \otimes (\det R^1 \Phi_*\mathbb{L})^*
\ \longrightarrow\ J^{g-1}(X)
$$
(see \cite[Ch.~V,\S~6]{Ko} for the determinant bundle). This line bundle
$\mathbb{D}(\mathbb{L})$ does not depend on the choice of the Poincar\'e line bundle
$\mathbb{L}$. The fiber of $\mathbb{D}(\mathbb{L})$ over any $L\, \in\, J^{g-1}(X)$
is identified with
$$
d(L)\, :=\, \bigwedge\nolimits^{\rm top} H^0(X,\, L)\otimes
\bigwedge\nolimits^{\rm top} H^1(X,\, L)^*.
$$

The holomorphic line bundle
${\mathcal O}_{J^{g-1}(X)}(\Theta) \, \longrightarrow\, J^{g-1}(X)$,
where $\Theta$ is the divisor in \eqref{e1}, 
is canonically identified with the line bundle $\mathbb{D}(\mathbb{L})$
(see \cite[p.~567]{NR}, \cite[p.~396]{Fal}). Consequently,
the fiber of ${\mathcal O}_{J^{g-1}(X)}(\Theta)$ over any line bundle
$\zeta\, \in\, J^{g-1}(X)$ is canonically identified with the line
$$
\theta_\zeta\, :=\, \mathbb{D}(\mathbb{L})^*_\zeta \,=\, d(\zeta)^*.
$$

Note that since $\chi(\zeta)\, =\, 0$, the automorphisms of $\zeta$
given by the nonzero scalar multiplications actually act trivially on
the line $\theta_\zeta$. Fix a theta characteristic $\tau$ on
$X$. From the above description of the fibers of
${\mathcal O}_{J^{g-1}(X)}(\Theta)$ it follows that the fiber of the
line bundle ${\mathcal O}_{J^0(X)}(\Theta_{\tau})$ (see \eqref{e3})
over any $L\, \in\, J^0(X)$ is the line
$$
\theta_{L\otimes{\tau}}\,=\, d(L\otimes{\tau})^*\, .
$$

Take any point $(x,\, y)\, \in\, X\times X$, and consider the line bundle
$\mathbf L$ in \eqref{e7}. From the above
description of the fibers of ${\mathcal O}_{J^0(X)}(\Theta_{\tau})$ we
conclude that the fiber of the line bundle $\phi^*\mathbf L$ (see
\eqref{e9}) over $(x,\, y)$ is the line
\begin{equation}\label{el}
d({\mathcal O}_X(x-y)\otimes{\tau})^*
\otimes d({\mathcal O}_X(y-x)\otimes{\tau})^*\, .
\end{equation}

To compute $d({\mathcal O}_X(x-y)\otimes{\tau})$, first consider the
short exact sequence of sheaves on $X$
$$
0\, \longrightarrow\, {\mathcal O}_X(x-y)\otimes{\tau}\,
\longrightarrow\, {\mathcal O}_X(x)\otimes{\tau} \longrightarrow\,
({\mathcal O}_X(x)\otimes{\tau})_y \, \longrightarrow\, 0\, ,
$$
where the sheaf $({\mathcal O}_X(x)\otimes{\tau})_y$ is supported on
$y$. From this short exact sequence we conclude that
\begin{equation}\label{e10}
d({\mathcal O}_X(x)\otimes{\tau})\,=\, d({\mathcal O}_X(x-y)\otimes{\tau})
\otimes ({\mathcal O}_X(x)\otimes{\tau})_y\, .
\end{equation}

Next, using the short exact sequence of sheaves on $X$
$$
0\, \longrightarrow\, {\tau}\, \longrightarrow\, {\mathcal
O}_X(x)\otimes{\tau} \longrightarrow\, ({\mathcal
O}_X(x)\otimes{\tau})_x \, \longrightarrow\, 0\, ,
$$
where the sheaf $({\mathcal O}_X(x)\otimes{\tau})_x$ is supported on
$x$, we have
\begin{equation}\label{e10b}
d({\mathcal O}_X(x)\otimes{\tau})\,=\, d(\tau)\otimes ({\mathcal O}_X(x)\otimes{\tau})_x
\,=\, d(\tau)\otimes{\tau}_x\otimes {\mathcal O}_X(x)_x\, .
\end{equation}
Using the Poincar\'e adjunction formula (see \cite[p.~146]{GH}),
\begin{equation}\label{e10a} {\mathcal O}_X(x)_x\,=\, T_xX\, .
\end{equation}
Hence combining \eqref{e10b} and \eqref{e10a},
$$d({\mathcal O}_X(x)\otimes{\tau})\,=\, d(\tau)\otimes {\tau}_x\otimes
T_xX\,=\, d(\tau)\otimes {\tau}^*_x\, ,$$ because
${\tau}^{\otimes 2}\,=\, (TX)^*$ (recall that $\tau$ is a theta
characteristic). Combining this with \eqref{e10} it follows that
\begin{equation}\label{e11}
d({\mathcal O}_X(x-y)\otimes{\tau})\,=\, 
d(\tau)\otimes {\tau}^*_x\otimes {\tau}^*_y\otimes
{\mathcal O}_X(-x)_y\, .
\end{equation}

Interchanging $x$ and $y$, from \eqref{e11} we have
\begin{equation}\label{e12}
d({\mathcal O}_X(y-x)\otimes{\tau})\,=\,
d(\tau)\otimes {\tau}^*_y\otimes {\tau}^*_x\otimes
{\mathcal O}_X(-y)_x\, .
\end{equation}
Note that both ${\mathcal O}_X(-x)_y$ and ${\mathcal O}_X(-y)_x$ are
identified with ${\mathcal O}_{X\times
X}(-\Delta)_{(x,y)}$. Therefore, substituting \eqref{e11} and
\eqref{e12} in \eqref{el}, we have
$$
d({\mathcal O}_X(x-y)\otimes{\tau})^* \otimes d({\mathcal
O}_X(y-x)\otimes{\tau})^* \,=\, \left((p^*_1 K_X)\otimes (p^*_2
K_X)\otimes{\mathcal O}_{X\times X}(2\Delta)\right)_{(x,y)}\, .
$$

Here we use that $d(\tau)$ is a line independent of $(x,y)$.

Note that $d({\mathcal O}_X(x-y)\otimes{\tau})^* \otimes d({\mathcal
O}_X(y-x)\otimes{\tau})^*$ is the fiber of $\phi^*\mathbf L$ over
$(x,\, y)$, and hence the proposition follows.
\end{proof}

\begin{remark}\label{pr-re}
A referee supplied the following elegant proof of Proposition \ref{prop1}.
Fix a theta characteristic $\tau$ on $X$ with $H^0(X,\, \tau)\,=\, 0$.
Then for any $x,\, y\, \in\, X$, on we have
$$
\phi^{-1}(\iota^*\Theta_\tau) \cap (
X\times\{y\})\ = \ \{z\, \in\, X\,\,\big\vert\,\,
H^0(X,\, \tau\otimes {\mathcal O}_X(y-z))\, \not=\, 0\},
$$
$$
\phi^{-1}(\iota^*\Theta_\tau) \cap (\{x\}\times X)\ = \ \{z\, \in\, X\,\,\big\vert\,\,
H^0(X,\, \tau\otimes {\mathcal O}_X(z-x))\, \not=\, 0\}.
$$
Since $h^0(X,\, \tau)\,=\, 0$ and $h^0(X, \tau \otimes {\mathcal O}_X (y) )
 =1 $,  it can be deduced that
$$
\phi^* {\mathcal O}_{J^0(X)}(\iota^*\Theta_\tau)\big\vert_{X\times \{y\}}
\ = \ \tau\otimes {\mathcal O}_X(y),\ \ \,
\phi^* {\mathcal O}_{J^0(X)}(\iota^*\Theta_\tau)\big\vert_{\{x\}\times X}
\ = \ \tau\otimes {\mathcal O}_X(x).
$$
On the other hand,
$$
\phi^* {\mathcal O}_{J^0(X)}(\Theta_\tau)\big\vert_{X\times \{y\}}
\ = \ \tau\otimes {\mathcal O}_X(y),\ \ \,
\phi^* {\mathcal O}_{J^0(X)}(\Theta_\tau)\big\vert_{\{x\}\times X}
\ = \ \tau\otimes {\mathcal O}_X(x).
$$

Consequently, we have
$$
\phi^*{\mathbf L}\big\vert_{X\times \{y\}}\,=\, K_X\otimes {\mathcal O}_X(2y)\,=\,
((p^*_1 K_X)\otimes (p^*_2 K_X)\otimes{\mathcal
O}_{S}(2\Delta))\big\vert_{X\times \{y\}},
$$
$$
\phi^*{\mathbf L}\big\vert_{\{x\}\times X}\,=\, K_X\otimes {\mathcal O}_X(2x)\,=\,
((p^*_1 K_X)\otimes (p^*_2 K_X)\otimes{\mathcal
O}_{S}(2\Delta))\big\vert_{\{x\}\times X}.
$$
Now Proposition \ref{prop1} follows from the see--saw theorem \cite{Mu}.
\end{remark}

\begin{remark}\label{rem1}
The Poincar\'e adjunction formula says that the restriction of
${\mathcal O}_{S}(\Delta)$ to the divisor $\Delta\, \subset\,S$
coincides with the normal bundle of $\Delta$, which, in turn, is
identified with $TX$ after we identify $X$ with $\Delta$ via the map
$x\, \longmapsto\, (x,\, x)$. Consequently, the restriction of the
line bundle
$(p^*_1 K_X)\otimes (p^*_2 K_X)\otimes{\mathcal O}_{S}(2\Delta)$ to
$\Delta$ coincides with the trivial line bundle
${\mathcal O}_\Delta$ on $\Delta$.
\end{remark}

The map $\phi$ in \eqref{e9} satisfies the condition
$\phi(\Delta)\,=\, 0\, \in\, J^0(X)$, and hence for any nonzero
section $s\, \in\, \mathbb S$ (see \eqref{e8}), the evaluation
$\phi^*s(x,\, x)\, \in\, (\phi^*{\mathbf L})_{(x,x)}$ is nonzero for
every $x\, \in\, X$. Consequently, using Proposition \ref{prop1} it is
deduced that
\begin{equation}\label{e13}
\phi^*{\mathbb S} \, \subset\, H^0(X\times X,\, (p^*_1 K_X)\otimes
(p^*_2 K_X)\otimes{\mathcal O}_{S}(2\Delta))
\end{equation}
is a $1$-dimensional subspace (in particular, it is not zero
dimensional), and furthermore, for any nonzero
$$v \,\in\,\phi^*{\mathbb S} \, \subset\, H^0(X\times X, \, (p^*_1 K_X)\otimes (p^*_2
K_X)\otimes{\mathcal O}_{S}(2\Delta))$$ the evaluation
$v(x,\, x)\, \in\, ((p^*_1 K_X)\otimes (p^*_2 K_X)\otimes{\mathcal
O}_{S}(2\Delta))_{(x,x)}$ is nonzero for every $x\, \in\, X$.

Note that the subspace $\phi^*{\mathbb S}$ in \eqref{e13} does not
depend on the choice of the isomorphism (in Proposition \ref{prop1})
between
$(p^*_1 K_X)\otimes (p^*_2 K_X)\otimes{\mathcal O}_{S}(2\Delta)$ and
$\phi^*{\mathbf L}$.

Now in view of Remark \ref{rem1} we have the following:

\begin{proposition}\label{prop2}
There is a unique section
$$\sigma_X\, \in\, \phi^*{\mathbb S} \, \subset\, H^0(S,\,
(p^*_1 K_X)\otimes (p^*_2 K_X)\otimes{\mathcal O}_{S}(2\Delta))$$
such that the restriction of $\sigma_X$ to $\Delta$ coincides with
the constant function $1$ on $\Delta$ using the identification
between
$((p^*_1 K_X)\otimes (p^*_2 K_X)\otimes{\mathcal
O}_{S}(2\Delta))\big\vert_\Delta$ and ${\mathcal O}_\Delta$.
\end{proposition}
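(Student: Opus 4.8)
The plan is to combine the one‑dimensionality of $\phi^*\mathbb S$ (already established in \eqref{e13}) with the nonvanishing of any nonzero element of this line along the diagonal, using Remark \ref{rem1} to make sense of the "value along $\Delta$" of such a section. First I would fix a nonzero section $v \in \phi^*\mathbb S$; by \eqref{e13} its restriction $v|_\Delta$ is a nowhere‑vanishing section of the line bundle $\bigl((p^*_1 K_X)\otimes(p^*_2 K_X)\otimes\mathcal O_S(2\Delta)\bigr)\big|_\Delta$. By Remark \ref{rem1} this line bundle is canonically the trivial bundle $\mathcal O_\Delta$, so $v|_\Delta$ is identified with a nowhere‑vanishing holomorphic function on $\Delta \cong X$. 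Since $X$ is compact and connected, this function is a nonzero constant $c \in \mathbb C^\times$.

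Next I would set $\sigma_X := c^{-1} v$. This is again an element of the line $\phi^*\mathbb S$, and by construction its restriction to $\Delta$ is the constant function $1$ under the identification of Remark \ref{rem1}. For uniqueness, suppose $\sigma'_X \in \phi^*\mathbb S$ also restricts to the constant function $1$ on $\Delta$. Because $\phi^*\mathbb S$ is $1$‑dimensional, we have $\sigma'_X = \lambda \sigma_X$ for some $\lambda \in \mathbb C$; restricting to $\Delta$ gives $1 = \lambda\cdot 1$, whence $\lambda = 1$ and $\sigma'_X = \sigma_X$. This proves both existence and uniqueness.

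There is essentially no hard step here: the proposition is a formal consequence of facts already in hand, namely that $\phi^*\mathbb S$ is a line, that its nonzero elements do not vanish on $\Delta$, and that $\bigl((p^*_1 K_X)\otimes(p^*_2 K_X)\otimes\mathcal O_S(2\Delta)\bigr)\big|_\Delta \cong \mathcal O_\Delta$ canonically. The only point deserving a word of care — and the one I would state explicitly — is that the identification used to view $v|_\Delta$ as a function is the canonical one from the Poincaré adjunction formula (Remark \ref{rem1}), so it does not introduce any ambiguity; in particular, as already noted after \eqref{e13}, neither $\phi^*\mathbb S$ nor the normalization depends on the choice of isomorphism $\phi^*\mathbf L \cong (p^*_1 K_X)\otimes(p^*_2 K_X)\otimes\mathcal O_S(2\Delta)$ in Proposition \ref{prop1}. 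Hence the section $\sigma_X$ is well defined and intrinsic.
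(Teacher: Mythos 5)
Your argument is correct and is exactly the reasoning the paper intends: the proposition is stated there as an immediate consequence of the one-dimensionality of $\phi^*\mathbb S$, the nonvanishing of its nonzero elements along $\Delta$, and the trivialization of the restricted line bundle from Remark \ref{rem1}, which you have simply written out in full (including the observation that a nowhere-vanishing holomorphic function on the compact connected curve $\Delta$ is a nonzero constant, so one can normalize). No gaps.
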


Let
\begin{equation}\label{e14}
f_X\, :\, X\times X\, \longrightarrow\, X\times X\, ,\ \ (x,\, y) \,\longmapsto\,
(y,\, x)
\end{equation}
be the involution of $X\times X$. We note that
$$
f^*_X ((p^*_1 K_X)\otimes (p^*_2 K_X)\otimes{\mathcal O}_{S}(2\Delta))
\,=\, (p^*_1 K_X)\otimes (p^*_2 K_X)\otimes{\mathcal O}_{S}(2\Delta)\,
.
$$

\begin{lemma}\label{lem2}
The section $\sigma_X$ in Proposition \ref{prop2} satisfies the
equation
$$
f^*_X \sigma_X \,=\, \sigma_X.
$$
\end{lemma}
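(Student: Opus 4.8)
The plan is to show that $f_X^*\sigma_X$ lies in the same one-dimensional space $\phi^*\mathbb{S}$ and has the same restriction to $\Delta$, so that the uniqueness in Proposition \ref{prop2} forces $f_X^*\sigma_X = \sigma_X$. The key observation is the relation between the involutions $f_X$ on $S$ and $\iota$ on $J^0(X)$: for any $(x,y)\in S$ we have $\phi(f_X(x,y)) = \phi(y,x) = \mathcal{O}_X(y-x) = \mathcal{O}_X(x-y)^* = \iota(\phi(x,y))$, i.e. $\phi\circ f_X = \iota\circ\phi$.

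First I would note that $\iota$ fixes the origin $0\in J^0(X)$, and that $\iota$ acts on $\mathbb{V} = H^0(J^0(X),\mathbf{L})$: indeed, by Lemma \ref{lem1}(2) the line bundle $\mathbf{L} = \mathcal{O}_{J^0(X)}(\Theta_\tau + \iota(\Theta_\tau))$ is symmetric under $\iota$ (it was built precisely to be $\iota$-invariant), so $\iota^*\mathbf{L}\cong \mathbf{L}$ canonically and hence $\iota$ induces a linear automorphism of $\mathbb{V}$. Since the Hermitian structure on $\mathbf{L}$ has curvature $2\omega_0$ and $\iota^*\omega_0 = \omega_0$ (the cup product form is translation- and $\pm 1$-invariant), the induced action of $\iota$ on $\mathbb{V}$ is unitary for the $L^2$ metric \eqref{kf3}, up to the scalar ambiguity in $h$. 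Because $\iota(0) = 0$, the action of $\iota$ preserves the subspace $V_0$ of sections vanishing at $0$, and since $\iota$ acts unitarily it therefore also preserves the orthogonal complement $\mathbb{S} = V_0^\perp$. Thus $\iota^*\mathbb{S} = \mathbb{S}$.

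Next I would combine this with $\phi\circ f_X = \iota\circ\phi$: pulling back, $f_X^*\phi^*\mathbb{S} = \phi^*\iota^*\mathbb{S} = \phi^*\mathbb{S}$. Since $\phi^*\mathbb{S}$ is the one-dimensional space containing $\sigma_X$, we conclude $f_X^*\sigma_X \in \phi^*\mathbb{S}$, so $f_X^*\sigma_X = c\,\sigma_X$ for some constant $c$. To pin down $c = 1$, restrict to the diagonal $\Delta$, which is pointwise fixed by $f_X$. By Proposition \ref{prop2}, $\sigma_X|_\Delta$ is the constant function $1$ under the canonical identification of $((p_1^*K_X)\otimes(p_2^*K_X)\otimes\mathcal{O}_S(2\Delta))|_\Delta$ with $\mathcal{O}_\Delta$ from Remark \ref{rem1}. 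The involution $f_X$ acts as the identity on $\Delta$ and, one checks, acts compatibly with that canonical trivialization (it swaps the two copies of $K_X$ and swaps the local defining equation of $\Delta$ up to sign, with the sign appearing squared), so $(f_X^*\sigma_X)|_\Delta = \sigma_X|_\Delta = 1$. Hence $c = 1$ and $f_X^*\sigma_X = \sigma_X$.

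The main obstacle I anticipate is the bookkeeping in the last step: verifying that the residue/biresidue normalization is genuinely preserved by $f_X$, i.e. that the canonical identification of $((p_1^*K_X)\otimes(p_2^*K_X)\otimes\mathcal{O}_S(2\Delta))|_\Delta$ with $\mathcal{O}_\Delta$ is $f_X$-equivariant. Concretely one picks a local coordinate $z$ near a point and writes $\sigma_X = \bigl(g(z,w)/(z-w)^2\bigr)\,dz\,dw$ near the diagonal; then $f_X^*\sigma_X$ is obtained by $z\leftrightarrow w$, and since $(z-w)^2 = (w-z)^2$ and $dz\,dw = dw\,dz$, the biresidue along $\Delta$ is manifestly preserved — so the apparent sign issue is harmless. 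The rest is formal once the identity $\phi\circ f_X = \iota\circ\phi$ and the $\iota$-invariance of $\mathbb{S}$ are in place.
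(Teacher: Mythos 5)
Your proposal is correct and follows essentially the same route as the paper: the identity $\phi\circ f_X=\iota\circ\phi$, the unitarity (up to scalar) of the $\iota$-action on $\mathbb V$, and the resulting invariance of $\mathbb S=V_0^\perp$. The only cosmetic difference is in fixing the proportionality constant: the paper normalizes the lift $\iota'$ of $\iota$ to act as the identity on $\mathbf L_0$ so that the induced map is literally the identity on $\mathbb S$, while you instead compare biresidues along $\Delta$ -- both are valid, and your local check that $f_X$ preserves the trivialization of $((p_1^*K_X)\otimes(p_2^*K_X)\otimes\mathcal O_S(2\Delta))\big\vert_\Delta$ is the right one (the sign issue only appears after passing to $K_S(2\Delta)$ via the wedge, as in Remark \ref{rem1a}).
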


\begin{proof}
 The map $\iota$ in \eqref{io} preserves the divisor
 $\Theta_{\tau} +\iota(\Theta_{\tau})$ (see Lemma \ref{lem1}(1))
 and $\iota(0) = 0$.
 Therefore the
involution $\iota$ admits a unique lift to a holomorphic involution $\iota'$ of the line bundle
${\mathbf L}\, :=\, {\mathcal O}_{J^{0}(X)}(\Theta_{\tau} +
\iota(\Theta_{\tau}))$ on $J^{0}(X)$
such that $\iota' $ acts as the identity on $\mathbf L_0$.
%
This involution $\iota'$ of ${\mathbf L}$ produces an
involution
\begin{gather*}
\iota''\, :\, {\mathbb V}\,:=\, H^0(J^{0}(X),\, {\mathbf L})\,
\longrightarrow\, {\mathbb V}
\label{iopp}
\end{gather*}
defined by $s\, \longmapsto\, \iota' \circ s \circ \iota$.

The K\"ahler form $\omega_0$ in \eqref{kf} is preserved by $\iota$,
and hence the hermitian structure $h$ on ${\mathbf L}$ in \eqref{kf2}
is preserved by $\iota'$ up to a constant scalar. Consequently, the
conformal class of the $L^2$ Hermitian structure on $\mathbb V$ (see
\eqref{kf3}) is preserved by the above involution $\iota''$ of
$\mathbb V$. On the other hand, $\iota(0)\,=\, 0$. Therefore,
$\iota''$ preserves the line ${\mathbb S}\, \subset\, \mathbb V$ in
\eqref{e8}.  Since $\iota ' $ acts as the identity on $\mathbf L_0$,
we deduce that $\iota'' $ is the identity on $\mathbb S$.  In view of
this, and the fact that
\begin{equation}\label{ed3}
\phi\circ f_X\,\,=\,\, \iota\circ\phi\, ,
\end{equation}
where $\phi$ and $f_X$ are the maps in \eqref{e9} and \eqref{e14} respectively, the
lemma follows from Proposition \ref{prop2}.
\end{proof}

Set
\begin{equation*}
\mathbb V_{00} \,: =\, \{ s\,\in\, \mathbb V \,\,\big\vert\,\, m_0 (s) \,\geq\, 4\},
\end{equation*}
where $m_0(s)$ is the multiplicity of $s$ at the origin ${\mathcal O}_X$ of $J^0(X)$. Let
\begin{gather}\label{pht}
\begin{gathered}
\phi^*\,\, :\,\, {\mathbb V}\,\, \longrightarrow\,\, H^0(S,\, \phi^*{\mathbf L})\,\,
\cong\,\, H^0(S,\, K_S(2\Delta))
\end{gathered}
\end{gather}
be the pullback homomorphism for the map $\phi$ in \eqref{e9} (see
Proposition \ref{prop1} for the above isomorphism). Then we have
\begin{gather}\label{eq:2}
\mathbb V_{00}\,\, =\,\, \ker \diffmap^*;
\end{gather}
this is \cite[p.~625, Remark]{vgvg}. Note that we have
\begin{gather}\label{eq:3}
\dim \mathbb V_{00}\,\,=\,\, 2^g - \frac{g(g+1)}{2} -1.
\end{gather}
(see \cite[Proposition 1.1]{vgvg}) and
\begin{equation}\label{ed}
\dim {\rm image}(\diffmap^*)\,\,=\,\, \frac{g(g+1)}{2}+1.
\end{equation}

Set
\begin{gather*}
S\,\,=\,\,X\times X.
\end{gather*}
The two line bundles $K_S$ and $K_S(2\Delta)$ are both equivariant under
$f_X$ in \eqref{e14}. Set
\begin{gather}
H^0(S,\,K_S)^+ \,:= \,\{\alpha \,\in\, H^0(S,\,K_S)\,\big\vert\,\,
f_X^*\alpha \,=\, \alpha\}
\,\cong\, \bigwedge\nolimits^2 H^0(X,\,K_X), \nonumber\\
H^0(S,\,K_S)^-\,:=\, \{\alpha \,\in\, H^0(S,\,K_S)\,\big\vert\,\,
f_X^*\alpha \,=\, -\alpha\} \,\cong\, S^2H^0(X,\,K_X), \label{ka2}\\
H^0(S,\,\lb)^+\,\,:=\, \,\{\alpha \,\in\, H^0(S,\,\lb)\,\,\big\vert\,\, f_X^*\alpha \,=\, \alpha\}, \nonumber\\
H^0(S,\,\lb)^-\,\, :=\,\, \{\alpha \,\in\,
H^0(S,\,\lb)\,\big\vert\,\, f_X^*\alpha \,=\, -\alpha\}.\label{ka4}
\end{gather}
(See \cite[\S~2]{deba} for more details.)

\begin{remark}\label{rem1a}
Proposition \ref{prop1} says that
$\phi^\ast \Thetatwo\,\cong\, p_1^\ast K_X\otimes p_2^\ast
K_X\otimes \mathcal{O}_{S}(2\Delta)$. This line bundle
$p_1^\ast K_X\otimes p_2^\ast K_X\otimes \mathcal{O}_{S}(2\Delta)$
is isomorphic to $K_S(2\Delta)$. Denote by $\lambda$ the natural
isomorphism
\begin{gather*}
\lambda\,:\, H^0(S,\, p_1^\ast K_X\otimes p_2^\ast K_X\otimes
\mathcal{O}_{S}(2\Delta))\,\longrightarrow\, H^0(S,\, K_S(2\Delta)) \\
(p_1^\ast \psi_1)\otimes (p_2^\ast \psi_2)\,\longmapsto\,
(p_1^\ast\psi_1)\wedge (p_2^\ast \psi_2).
\end{gather*}
With a minor abuse of notation, let
\begin{gather*}
f_X^\ast\,:\, H^0(S,\, p_1^\ast K_X\otimes p_2^\ast K_X\otimes
\mathcal{O}_{S}(2\Delta))\,
\longrightarrow\, H^0(S,\, p_1^\ast K_X\otimes p_2^\ast K_X\otimes \mathcal{O}_{S}(2\Delta)),\\
f_X^\ast \,:\,H^0(S,\, \lb)\,\longrightarrow\, H^0(S,\, \lb)
\end{gather*}
denote the involutions induced by the map $f_X$ in \eqref{e14}. Then
we have
\begin{equation}\label{ed2}
f_X^\ast\circ \lambda \,\,=\,\,-\,\lambda \circ f_X^\ast.
\end{equation}
\end{remark}

\begin{lemma}\mbox{}
\begin{enumerate}
\item The image of the map
\begin{gather*}
\diffmap^* \,: \,H^0(J^0(X),\, \Thetatwo) \,\lra\, H^0(S,\, \lb)
\end{gather*}
(see \eqref{pht}) coincides with $H^0(S,\, \lb)^-$ in \eqref{ka4}.

\item The subspace $V_0\,\subset\, H^0(J^0(X),\,\Thetatwo)\, =\, {\mathbb V}$
(defined in \eqref{v0}) maps surjectively to $H^0(S,\,K_S)^-$ in \eqref{ka2}.
\end{enumerate}
\end{lemma}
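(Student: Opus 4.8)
The plan is to deduce both statements from the interaction of $\diffmap^{*}$ with the involution $f_{X}$ of \eqref{e14}, together with the numerical input of \cite{vgvg} recorded in \eqref{ed}, \eqref{eq:2} and \eqref{eq:3}.

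\emph{Step 1: the image sits in the $(-)$-part.} Recall from the proof of Lemma \ref{lem2} the linearization $\iota'$ of $\iota$ on $\Thetatwo$ acting as the identity on the fibre $\Thetatwo_{0}$, and the resulting involution $\iota''$ of $\mathbb{V}$. The first point I would establish is that $\iota''\,=\,\mathrm{id}_{\mathbb{V}}$, i.e. \emph{every} section of $\Thetatwo$ is even for this linearization: this is the classical statement that second order theta functions are even (here $\Thetatwo\,=\,{\mathcal O}_{J^{0}(X)}(2\Theta_{\tau})$ is twice a symmetric principal polarization; see \cite{Mu}). Granting this, combine $\diffmap\circ f_{X}\,=\,\iota\circ\diffmap$ from \eqref{ed3} with $\iota''s\,=\,s$ to see that the pullback of any $s\in\mathbb{V}$ to $H^{0}(S,\diffmap^{*}\Thetatwo)$ is $f_{X}$-invariant; transporting along the isomorphism $\lambda$ of Remark \ref{rem1a} and using the sign relation \eqref{ed2} then gives $\diffmap^{*}s\,\in\, H^{0}(S,\lb)^{-}$. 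Hence $\mathrm{image}(\diffmap^{*})\,\subseteq\, H^{0}(S,\lb)^{-}$ (this is the computation that produced $f_{X}^{*}\sigma_{X}\,=\,\sigma_{X}$ in Lemma \ref{lem2}, now applied to all of $\mathbb{V}$).

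\emph{Step 2: matching dimensions, proving (1).} It remains to show $\dim H^{0}(S,\lb)^{-}\,\le\,\tfrac{g(g+1)}{2}+1$, since the right-hand side equals $\dim\mathrm{image}(\diffmap^{*})$ by \eqref{ed}. For this I would pass to the $f_{X}$-equivariant exact sequences
\begin{gather*}
0\,\longrightarrow\, K_{S}\,\longrightarrow\,\lb\,\longrightarrow\,\lb\big\vert_{2\Delta}\,\longrightarrow\,0,\\
0\,\longrightarrow\, N_{\Delta}^{*}\otimes\lb\big\vert_{\Delta}\,\longrightarrow\,\lb\big\vert_{2\Delta}\,\longrightarrow\,\lb\big\vert_{\Delta}\,\longrightarrow\,0,
\end{gather*}
where $N_{\Delta}$ is the normal bundle of $\Delta$ and $\lb\big\vert_{\Delta}\,\cong\,{\mathcal O}_{\Delta}$ by Remark \ref{rem1}. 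A short local frame computation — the wedge in $K_{S}$ produces exactly the sign of \eqref{ed2}, while $f_{X}$ reverses the normal direction to $\Delta$ — shows that $f_{X}$ acts by $-1$ on $\lb\big\vert_{\Delta}$ and by $-1$ on $N_{\Delta}^{*}$, hence by $+1$ on $N_{\Delta}^{*}\otimes\lb\big\vert_{\Delta}\,\cong\, K_{X}$. Consequently $H^{0}$ and $H^{1}$ of $N_{\Delta}^{*}\otimes\lb\big\vert_{\Delta}$ are purely $f_{X}$-invariant while $H^{0}(\lb\big\vert_{\Delta})\,\cong\,\mathbb{C}$ is purely anti-invariant, so the second sequence gives $\dim H^{0}(S,\lb\big\vert_{2\Delta})^{-}\,=\,1$; feeding this together with $H^{0}(S,K_{S})^{-}\,\cong\, S^{2}H^{0}(X,K_{X})$ (from \eqref{ka2}) into the first sequence yields the bound $\dim H^{0}(S,\lb)^{-}\,\le\,\binom{g+1}{2}+1$, as required. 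Thus $\mathrm{image}(\diffmap^{*})\,=\,H^{0}(S,\lb)^{-}$.

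\emph{Step 3: proving (2).} If $s\in V_{0}$ then its constant term at the origin vanishes, and since $s$ is even (Step 1) so does its linear term, whence $m_{0}(s)\,\ge\,2$. Because $\diffmap(\Delta)\,=\,0$ we have $\diffmap^{*}{\mathfrak m}_{0}^{2}\,\subseteq\,{\mathfrak m}_{\Delta}^{2}$, so the pullback of $s$ vanishes to order at least $2$ along $\Delta$, and hence $\diffmap^{*}s\,\in\, H^{0}(S,K_{S})\,\subseteq\, H^{0}(S,\lb)$; by (1) it then lies in $H^{0}(S,K_{S})\cap H^{0}(S,\lb)^{-}\,=\,H^{0}(S,K_{S})^{-}$. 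So $\diffmap^{*}$ carries $V_{0}$ into $H^{0}(S,K_{S})^{-}$, and since $\ker\diffmap^{*}\,=\,\mathbb{V}_{00}\,\subseteq\, V_{0}$ (by \eqref{eq:2}, as $m_{0}(s)\ge4$ forces $s\in V_{0}$) we get, using \eqref{eq:3},
\begin{equation*}
\dim\diffmap^{*}(V_{0})\,=\,\dim V_{0}-\dim\mathbb{V}_{00}\,=\,(2^{g}-1)-\Bigl(2^{g}-\tfrac{g(g+1)}{2}-1\Bigr)\,=\,\tfrac{g(g+1)}{2}\,=\,\dim H^{0}(S,K_{S})^{-},
\end{equation*}
so the map is surjective. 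The step I expect to be the real obstacle is the equality in (1): the symmetry argument of Step 1 only gives an inclusion, and promoting it to an equality forces the bookkeeping of Step 2 — getting the signs of the $f_{X}$-action on the rank-one graded pieces of $\lb\big\vert_{2\Delta}$ right (the same wedge-sign as \eqref{ed2}) and then invoking the van Geemen--van der Geer count \eqref{ed}. By contrast Step 3 is essentially linear algebra once the evenness from Step 1 is in hand.
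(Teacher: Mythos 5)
Your proposal is correct, and its logical skeleton coincides with the paper's: first the equivariance relation $\phi\circ f_X=\iota\circ\phi$ together with the evenness of all sections of $\Thetatwo$ and the sign \eqref{ed2} places the image inside $H^0(S,K_S(2\Delta))^-$, and then the van Geemen--van der Geer count \eqref{ed} upgrades the inclusion to an equality; part (2) likewise follows in both treatments from $V_0=\{s\,:\,m_0(s)\ge 2\}$, the resulting inclusion $\phi^*(V_0)\subseteq H^0(S,K_S)^-$, and a dimension count. The one place where you genuinely diverge is the evaluation of $\dim H^0(S,K_S(2\Delta))^-$: the paper imports $h^0(S,K_S(2\Delta))=g^2+1$ and the existence of an anti-invariant section with nonzero biresidue from \cite{BR1}, whereas you derive the bound $\dim H^0(S,K_S(2\Delta))^-\le \tfrac{g(g+1)}{2}+1$ from the $f_X$-equivariant restriction sequences along $2\Delta$ and $\Delta$, after checking that $f_X$ acts by $-1$ on $N_\Delta$ and hence by $-1$ on $K_S(2\Delta)\vert_\Delta\cong\mathcal O_\Delta$ and by $+1$ on $N_\Delta^*\otimes K_S(2\Delta)\vert_\Delta\cong K_X$ (your signs are right). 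Your route is more self-contained and makes visible exactly where the ``$+1$'' in $\tfrac{g(g+1)}{2}+1$ comes from (the one-dimensional anti-invariant piece of $H^0(K_S(2\Delta)\vert_{2\Delta})$), at the cost of a slightly longer local computation; the paper's route is shorter but leans on external results. Your dimension count for part (2) via \eqref{eq:3} is equivalent to the paper's codimension-one argument. Two minor points of hygiene: the evenness claim $\iota''=\mathrm{id}_{\mathbb V}$, which you correctly flag as the classical statement that second-order theta functions are even, is also what the paper tacitly uses, so no gap there; and in Step 2 you only need the inequality $\dim H^0(K_S(2\Delta)\vert_{2\Delta})^-\le 1$, so you need not verify that the connecting homomorphism vanishes.
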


\begin{proof}
It follows from \eqref{ed3} that for any section $u$ of $\Thetatwo$,
the section $\phi^\ast u$ of $
p_1^\ast K_X\otimes p_2^\ast
K_X\otimes \mathcal{O}_{S}(2\Delta)$ is
$f_X^\ast$-invariant. In view of \eqref{ed2}, this implies that the
section of $\lb$ corresponding to $\phi^\ast u$ is
$f_X^\ast$-anti-invariant. This proves that the image of
$\diffmap^*$ is contained in $H^0(S,\,\lb)^-$.

Next recall that $h^0(S,\,K_S(2\Delta)) \,= \,g^2 + 1 = h^0(S, K_S) + 1$ \cite[Lemma
  2.11]{BR1} and that the section $\omega_\alpha$ of \cite[Corollary
  2.7]{BR1} is anti-invariant with respect to $f_X$.  (The same holds for
  the form $\eta$ that is recalled in Section \ref {section:6}.)
  Hence
  \begin{gather*}
h^0(S,\,K_S(2\Delta))^- \,\,=\,\, \frac{g(g+1)}{2} +1.
  \end{gather*}
Note that this coincides with the dimension of the image of $\phi^*$
(see \eqref{ed}), and hence the first statement of the lemma follows.

{}From the fact that all the sections of $\Thetatwo$ are invariant by
the involution $\iota$ in \eqref{io} it also follows that
\begin{gather*}
V _0 \,= \,\{ s\,\in\, H^0(J^0(X),\,\mathbf{L})\,\, \big\vert\,\, m_0 (s) \,\geq\, 2\},
\end{gather*}
  where $m_0$, as before, is the multiplicity at the origin of
  $J^0(X)$. Note that each nonzero section of $V_0$ vanishes at $0$ to order 2,
and hence the pullback of such a section to $S$ has no pole along $\Delta$. In other words, we
  have
  \begin{equation}\label{eq:4}
\phi^* (V_0) \,\,\subseteq\,\, H^0(S,\, K_S)^-.  
  \end{equation}
  Since the two subspaces $V_0\,\subset\, H^0(J^0(X),\, \Thetatwo)$ and
  $H^0(K_S)^-\,\subset \,H^0(S,\, K_S(2\Delta))^-$ both have codimension
  $1$, we conclude from (1) that in \eqref{eq:4} actually the equality holds.
\end{proof}

\section{Pullback of special sections of $\mathbf L$}\label{sec:pullback:special}

The isomorphism of line bundles in Proposition \ref{prop1} is
determined uniquely up to multiplication by a nonzero constant
scalar. In this Section we introduce a particular class of sections of
$\mathbf L$; see \eqref{def:sw} below. We compute the pullback of
these sections by the map $\phi$ using the formalism of theta
functions.

We start by recalling some notation.  Fix a marking on $X$, i.e., a
symplectic basis $\{a_i,\, b_i\}_{i=1}^g$ of $H_1(X,\,\mathbb Z)$. Let
$\{\omega_i\}_{i=1}^g$ be holomorphic 1-forms on $X$ such that
\begin{equation}\label{nf}
  \int_{a_i}\omega_j \,\,=\,\, \delta_{ij}.
\end{equation}
The period matrix $\tau$ is defined to be
\begin{equation}\label{epm}
  \tau_{ij} \ := \ \int_{b_j}\omega_i.
\end{equation}
We have identifications $H^0(X,\,K_X)^* \,\cong\, \mathbb C^g$ and
$J^0(X) \,\cong\, \mathbb C^g /( \mathbb Z^g + \tau \mathbb Z^g)$. Fix
a universal cover
\begin{gather*}
  \pi\,\,:\,\, \widetilde{X}\,\, \lra\,\, X.
\end{gather*}
Denote by $\tphi$ the lift, to a map between the universal covers, of the difference
map $\phi$ in \eqref{e9}:
\begin{equation}
  \label{phitilde}
  \begin{tikzcd}
\widetilde{X}\times \widetilde{X} \arrow{r}{\widetilde{\phi}} \arrow{d} & \mathbb{C}^g \arrow{d} \\
X\times X \arrow{r}{\phi} & J^0(X)
  \end{tikzcd}
  \qquad
  \widetilde{\phi} (x,\,y) \, =\, \left(\int_{y}^{x}
\omega_1,\,\cdots,\, \int_{y}^{x} \omega_g \right).
\end{equation}
Evidently, we have
\begin{gather}
  \label{tphi:minus}
  \tphi(x,\,x)\,=\,0, \qquad \tphi(x,\,y)\,=\, - \tphi(y,\,x).
\end{gather}
Note that given two points $x,\, y\,\in\, X$, there is always a
contractible open subset $U\,\subset\, X$. By the
uniformization theorem $U$ is biholomorphic to the disk or
$\mathbb C$, so there is a
holomorphic chart $z\,:\, U \,\lra\, \mathbb C$.  Also, there is a
section $s\,:\, U \,\lra\, \widetilde X$ of $\pi
$ which allows us to identify $U$ with the image $s(U)$. Then
\begin{gather*}
  (z_1\,:=\,z\circ p_1,\,\, z_2 \,:=\, z\circ p_2) \ :\ U\times U
\ \longrightarrow\ {\mathbb C}\times{\mathbb C}
\end{gather*}
where $p_i$ is the projection in \eqref{ep}, is a holomorphic coordinate chart on $U\times U
\,= \, s(U)\times s(U)$. Often it is convenient to transfer a computation from $U\times U$
to $s(U)\times s(U)$. By the above observation,
that given two points $x,\, y\,\in\, X$, there is a
contractible open subset $U\,\subset\, X$ containing both of them,
these subsets $U\times U$ cover $X\times X$.

Recall the notation for theta functions:
\begin{gather}\label{tf}
  \langle x,\, y\rangle \,:= \,\sum_{i=1}^n x_iy_i, \qquad \notag
  \theta(z)\,\,:=\,\, \sum_{m\in \mathbb{Z}^g} e^{\pi\sqrt{-1}\langle m, \tau
m + 2 z\rangle} .
\end{gather}
Let $\Theta$ denote the divisor of $\theta$. Any $\zeta\,\in\, \mathbb{C}^g$ can be expressed
uniquely as $\zeta\,=\tau a+\, b$ for $a,\,b\,\in\,
\mathbb{R}^g$. Then one defines
\begin{gather}\label{tzeta}
  \theta[\zeta](z) \,:=\, \sum_{m\in \mathbb{Z}^g}
  e^{\pi\sqrt{-1}\langle m+a, \tau(m+a) + 2 (z+b)\rangle}\,=\, e^{\pi
\sqrt{-1} \langle a, \tau a + 2 (z+b) \rangle } \cdot
  \theta(z+\zeta).
\end{gather}
So $\theta[\zeta] (z ) \,=\,0$ if and only if we have
$\theta(z+\zeta) \,=\, 0$. Also, we have $\theta(z+\zeta)\, =\, 0$ if
and only if $\zeta+z\,\in\, \Theta$. As usual we identify theta
functions on $\mathbb C^g$ with sections of line bundles over
$J^0(X)$. So $\theta$ in \eqref{tf} is identified with a section of a symmetric
line bundle, whose square is $\mathbf L$, while $\theta[\zeta]$ in \eqref{tzeta} is
identified with a section of a translation of this line bundle.

For $\zeta \,\in\, J^0(X)$, define
\begin{equation}\label{def:sw}
  s_\zeta(z) \,\,:=\,\, \theta(z-\zeta) \theta(z+\zeta).
\end{equation}
By the Theorem of the Square we have $s \,\in\, H^0(J^0(X),\,\mathbf L)$. It
is known that $\{s_\zeta\}_{\zeta\in\mathbb{C}^g}$ is a set of
generators of $H^0(J^0(X),\,\Thetatwo)$ \cite{Fay,Gu3}. These sections
$s_\zeta$ are the special sections in the title of this Section \ref{sec:pullback:special}.

The following property of $\theta$ will be very useful here.

\begin{lemma}\label{allthetalemma}
Take any $\zeta\,\in\,\Theta$. Write as usual $\zeta\,=\,\tau a+b$, and
  define
  $$c(\zeta)\,:=-\,e^{2\pi \sqrt{-1}\langle a, \tau a+2b \rangle}.$$
  Then the following holds:
  \begin{gather*}
 2\,\frac{\partial \theta[\zeta]}{\partial z_i}(0) \,
 \frac{\partial \theta[\zeta]}{\partial z_j}(0)
 \,\,=\,\,c(\zeta)\frac{\partial^2 s_\zeta}{\partial z_i\partial
 z_j}(0).
  \end{gather*}
\end{lemma}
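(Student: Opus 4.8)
The plan is to reduce everything to elementary Taylor expansions, exploiting the parity of $\theta$ together with the hypothesis $\zeta\,\in\,\Theta$. By definition $\zeta\,\in\,\Theta$ means $\theta(\zeta)\,=\,0$, and since $\theta$ is even we also have $\theta(-\zeta)\,=\,\theta(\zeta)\,=\,0$; in particular $s_\zeta(0)\,=\,\theta(-\zeta)\theta(\zeta)\,=\,0$, so the second-order term on the right-hand side is the relevant one to compare with the left-hand side.

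First I would differentiate $s_\zeta(z)\,=\,\theta(z-\zeta)\theta(z+\zeta)$ twice by the Leibniz rule. Writing $\theta_i\,:=\,\partial\theta/\partial z_i$ and $\theta_{ij}\,:=\,\partial^2\theta/\partial z_i\partial z_j$, this gives
$$\frac{\partial^2 s_\zeta}{\partial z_i\partial z_j}(z)\,=\,\theta_{ij}(z-\zeta)\theta(z+\zeta)+\theta_i(z-\zeta)\theta_j(z+\zeta)+\theta_j(z-\zeta)\theta_i(z+\zeta)+\theta(z-\zeta)\theta_{ij}(z+\zeta).$$
Evaluating at $z\,=\,0$, the first and last terms vanish because $\theta(\pm\zeta)\,=\,0$. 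Moreover $\theta$ even forces its first-order derivatives to be odd, so $\theta_i(-\zeta)\,=\,-\theta_i(\zeta)$, and one obtains
$$\frac{\partial^2 s_\zeta}{\partial z_i\partial z_j}(0)\,=\,\theta_i(-\zeta)\theta_j(\zeta)+\theta_j(-\zeta)\theta_i(\zeta)\,=\,-2\,\theta_i(\zeta)\,\theta_j(\zeta).$$

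Next I would treat the left-hand side directly from the definition \eqref{tzeta}, namely $\theta[\zeta](z)\,=\,e^{\pi\sqrt{-1}\langle a,\,\tau a+2(z+b)\rangle}\,\theta(z+\zeta)$. Differentiating in $z_i$ and evaluating at $z\,=\,0$, the contribution from differentiating the exponential automorphy factor is a multiple of $\theta(\zeta)\,=\,0$, so
$$\frac{\partial \theta[\zeta]}{\partial z_i}(0)\,=\,e^{\pi\sqrt{-1}\langle a,\,\tau a+2b\rangle}\,\theta_i(\zeta).$$
Substituting this together with the previous formula, and using $c(\zeta)\,=\,-e^{2\pi\sqrt{-1}\langle a,\,\tau a+2b\rangle}$, both $2\,\frac{\partial \theta[\zeta]}{\partial z_i}(0)\,\frac{\partial \theta[\zeta]}{\partial z_j}(0)$ and $c(\zeta)\,\frac{\partial^2 s_\zeta}{\partial z_i\partial z_j}(0)$ reduce to $2\,e^{2\pi\sqrt{-1}\langle a,\,\tau a+2b\rangle}\,\theta_i(\zeta)\,\theta_j(\zeta)$, which is the desired identity.

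There is no substantial obstacle in this argument; the only points requiring care are the correct bookkeeping of the exponential automorphy factor relating $\theta[\zeta]$ to $\theta(\,\cdot+\zeta)$, the sign conventions in the definition of $c(\zeta)$, and invoking the parity of $\theta$ (evenness of $\theta$, oddness of its gradient) at exactly the right places so that all the terms involving $\theta(\pm\zeta)$ drop out.
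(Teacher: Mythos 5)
Your proof is correct and follows essentially the same route as the paper: both arguments rest on the automorphy relation $\theta[\zeta](z)=e^{\pi\sqrt{-1}\langle a,\tau a+2(z+b)\rangle}\theta(z+\zeta)$, the evenness of $\theta$, and the vanishing $\theta(\zeta)=0$. The only difference is organizational — the paper first packages these facts into the functional identity $\theta[\zeta](z)\theta[\zeta](-z)=-c(\zeta)s_\zeta(z)$ and differentiates it twice, whereas you differentiate each side of the target identity separately and check that both reduce to $2e^{2\pi\sqrt{-1}\langle a,\tau a+2b\rangle}\theta_i(\zeta)\theta_j(\zeta)$.
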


\begin{proof}
  Denote by $u_i$ the $i$-th partial derivative of a function $u$.
  Plugging the value $-z$ in \eqref{tzeta}, and using that $\theta$ is
  even, we conclude that
  \begin{gather*}
 \theta[\zeta](-z)\,\,=\,\,e^{\pi \sqrt{-1}\langle a,\tau
 a+2(b-z)\rangle}\theta(z-\zeta).
  \end{gather*}
  Multiplying this equation by \eqref{tzeta} yields
  \begin{equation}\label{eq:2sectionzeta}
\theta[\zeta](z)\theta[\zeta](-z) \,\,=\,\,-\,c(\zeta)s_\zeta(z).
  \end{equation}
  The lemma follows by evaluating  at $z=0$ the double derivatives of both
  sides.  Indeed, taking double derivatives,
  $$
  \frac{\partial^2 \theta[\zeta](z)\theta[\zeta](-z) }{\partial
z_i\partial z_j}\Big\vert_{z=0}\,\,=\,\,
  2\theta[\zeta]_{ij}(0)\theta[\zeta](0)-2\theta[\zeta]_i(0)\theta[\zeta]_j(0)
  \,\,=\,\,-2\theta[\zeta]_i(0)\theta[\zeta]_j(0)$$ because
  $\zeta\,\in\,\Theta$, on the other hand, the second derivative of the right hand
  side of \eqref{eq:2sectionzeta} evaluated at $0$ is clearly
  $-c(\zeta)(s_\zeta)_{ij}(0)$.
 \end{proof}

 Our goal now is to compute the pullback of the special sections
$s_\zeta$, for $\zeta \in \Theta_{\rm reg}$. Fix a point $x_0 \,\in\, X$, and let
\begin{gather*}
A\,\,:\,\, X \,\,\lra\,\, J^0(X)
\end{gather*}
be the Abel-Jacobi map with $x_0$ as the base point. Hence we have
$\phi(x,\,y) \,=\, A(x) - A(y)$, where $\phi$ is the map in \eqref{e9}.

\begin{definition}\label{def:Fy}
For any $\zeta \,\in\, \Theta$ and $y\,\in\, X$, define
        \begin{gather*}
          F_{y,\zeta}( x )\,=\, \theta (A(x) -A(y) - \zeta) \,=\,
          \theta (\phi(x,\,y)- \zeta),
        \end{gather*}
        which is a section of a line bundle on $X$; it is also a holomorphic
        function on the universal covering of $X$.
\end{definition}

The following lemma is well-known.

\begin{lemma}\label{narasimhan111}\mbox{}
\begin{enumerate}
\item [(a)] There is some point $y_0\,\in\, X$
such that $F_{y_0,\zeta}\,\not\equiv\, 0$ holds if and only if
$ \zeta \,\in\, \Theta_{\rm reg}$ (the regular locus of the divisor).

\item [(b)] If $\zeta \,\in\, \Theta_{\rm reg}$, then there is an
effective divisor $D_\zeta$ of degree $g-1$, with
$\dim |D_\zeta|\, =\,0$, such that the following holds:
\begin{gather*}
\operatorname{div} F_{y,\zeta} \,=\, y + D_\zeta
\end{gather*}
for all $y\,\in\, X$ for which $F_{y,\zeta}\,\not\equiv\, 0$.

\item [(c)] There is $\kappa \,\in \,J^0(X)$ satisfying the
condition $2\kappa\,=\,-A(K_X)$, such that the equality $A(D_\zeta)\,=\,\zeta-\kappa$ holds.
\end{enumerate}
\end{lemma}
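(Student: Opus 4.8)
The plan is to reduce everything to the classical Riemann vanishing theorem and the standard description of the theta divisor via the Abel--Jacobi map. Recall that, writing $W_{g-1}\,=\,\{A(D)\,\mid\,D\text{ effective of degree }g-1\}\,\subset\,J^0(X)$, the Riemann vanishing theorem (see \cite[Ch.~VI]{GH} or \cite{Fay,Gu3}) gives a constant $\kappa\,\in\,J^0(X)$ with $2\kappa\,=\,-A(K_X)$ such that, for every $e\,\in\,J^0(X)$, the function $x\,\mapsto\,\theta(A(x)-e)$ on $\widetilde X$ either vanishes identically or has divisor equal to an effective divisor of degree $g$ whose image under $A$ is $e-\kappa$; and it vanishes identically precisely when $e\,\in\,W_{g-2}+\kappa$, equivalently when $\zeta\,:=\,e+\kappa\,\in\,\mathrm{Sing}\,\Theta$ (using $\mathrm{Sing}\,\Theta\,=\,W_{g-2}$, the Riemann singularity theorem, since $X$ is a curve). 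Here $F_{y,\zeta}(x)\,=\,\theta(\phi(x,y)-\zeta)\,=\,\theta(A(x)-(A(y)+\zeta))$, so the relevant $e$ is $e\,=\,A(y)+\zeta$.

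For part (a): if $\zeta\,\in\,\Theta_{\mathrm{reg}}$, then $e\,=\,A(y)+\zeta\,\in\,W_{g-2}+\kappa$ would force $\zeta\,\in\,(W_{g-2}-A(y))+\kappa$; but $W_{g-2}-A(y)\,\subset\,W_{g-1}-\kappa\,=\,\Theta-\kappa$ has image a proper subvariety, so for generic $y_0$ we get $A(y_0)+\zeta\,\notin\,W_{g-2}+\kappa$ and hence $F_{y_0,\zeta}\,\not\equiv\,0$. Conversely, if $\zeta\,\in\,\mathrm{Sing}\,\Theta\,=\,W_{g-2}+\kappa-\kappa$, wait—more cleanly: $\zeta\,\in\,\mathrm{Sing}\,\Theta$ means $\zeta\,=\,A(D')+\kappa$... let me phrase it as: $\zeta\in\mathrm{Sing}\,\Theta$ iff $\zeta-\kappa\in W_{g-2}$ iff for all $y$ one has $A(y)+\zeta\in W_{g-1}+\kappa$... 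I would simply invoke that $\theta(\phi(x,y)-\zeta)$ vanishes identically in $x$ for all $y$ exactly when $\zeta\in\mathrm{Sing}\,\Theta$, which is the standard statement, and note $\mathrm{Sing}\,\Theta\,=\,\Theta\setminus\Theta_{\mathrm{reg}}$.

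For parts (b) and (c): assume $\zeta\,\in\,\Theta_{\mathrm{reg}}$ and $F_{y,\zeta}\,\not\equiv\,0$. By Riemann vanishing, $\operatorname{div}F_{y,\zeta}$ is effective of degree $g$ with $A(\operatorname{div}F_{y,\zeta})\,=\,A(y)+\zeta-\kappa$. Since $\theta(\phi(y,y)-\zeta)\,=\,\theta(-\zeta)\,=\,\theta(\zeta)\,=\,0$ (as $\zeta\,\in\,\Theta$ and $\theta$ is even), the point $y$ itself lies in $\operatorname{div}F_{y,\zeta}$, so $\operatorname{div}F_{y,\zeta}\,=\,y+D_{y,\zeta}$ with $D_{y,\zeta}$ effective of degree $g-1$ and $A(D_{y,\zeta})\,=\,\zeta-\kappa$, \emph{independent of }$y$. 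It remains to see $D_{y,\zeta}$ itself is independent of $y$ and that $\dim|D_{y,\zeta}|\,=\,0$: the divisor class $A(D_{y,\zeta})\,=\,\zeta-\kappa$ is fixed, and if its complete linear system had positive dimension then $\zeta-\kappa\,\in\,W_{g-1}^1\,=\,W^1_{g-1}$, whose image is contained in $\mathrm{Sing}\,\Theta-\kappa$... more directly, by Riemann--Roch $\dim|D|\,\geq\,1$ for $D$ of degree $g-1$ forces $D+D'\,\in\,$ a pencil and the corresponding point of $\Theta$ to be singular, i.e.\ $\zeta\,\in\,\mathrm{Sing}\,\Theta$, contradicting $\zeta\,\in\,\Theta_{\mathrm{reg}}$. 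Hence $\dim|D_{y,\zeta}|\,=\,0$, and since an effective divisor of degree $g-1$ in a zero-dimensional linear system is determined by its class, $D_{y,\zeta}$ does not depend on $y$; write it $D_\zeta$. This gives $\operatorname{div}F_{y,\zeta}\,=\,y+D_\zeta$ and $A(D_\zeta)\,=\,\zeta-\kappa$, proving (b) and (c).

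I expect the only delicate point to be the clean identification of the locus where $F_{y,\zeta}$ vanishes identically with $\Theta\setminus\Theta_{\mathrm{reg}}$ for \emph{some} $y_0$ (part (a)), which hinges on the Riemann singularity theorem $\mathrm{Sing}\,\Theta\,=\,W_{g-2}+\kappa$ together with a dimension count showing $W_{g-2}-A(y)$ cannot contain $\zeta$ for generic $y$ when $\zeta$ is a regular point; the rest is a direct application of the Riemann vanishing theorem plus Riemann--Roch. Since the lemma is classical I would keep the argument brief and cite \cite[Ch.~VI]{GH} and \cite{Fay} for the Riemann vanishing and singularity theorems.
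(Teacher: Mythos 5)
Your proposal is correct, and it is in spirit the same reduction the paper makes: the paper's proof of this lemma consists entirely of citations to Narasimhan's book (p.~112, Theorem 1 for (a); p.~111, Lemma 2 for (b); p.~97, Theorem 1 for (c)), whereas you actually derive (b) and (c) from the Riemann vanishing theorem, the Riemann singularity theorem and Riemann--Roch. Your treatment of (b) and (c) is clean and complete: $y\in\operatorname{div}F_{y,\zeta}$ because $\theta(-\zeta)=\theta(\zeta)=0$, the residual degree-$(g-1)$ divisor has fixed Abel--Jacobi image $\zeta-\kappa$, and $\dim|D_\zeta|=0$ (forced by $\zeta\in\Theta_{\rm reg}$ via the singularity theorem) pins down $D_\zeta$ uniquely, hence independently of $y$. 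The one soft spot is the first version of your argument for (a): knowing that the relevant translate of $W_{g-2}$ is a proper subvariety of $J^0(X)$ does not by itself show that $A(y)+\zeta$ escapes it for generic $y$, since the curve $\zeta+A(X)$ could a priori be entirely contained in a codimension-two locus; a pure dimension count cannot work because the statement is false for $\zeta\in\operatorname{Sing}\Theta$. The correct mechanism, which you do end up invoking, is that writing $\zeta-\kappa=A(D)$ with $D$ effective of degree $g-1$, one has $F_{y,\zeta}\equiv 0$ if and only if $h^0(D+y)\geq 2$, i.e.\ $h^0(K_X-D-y)\geq 1$; when $h^0(D)=1$ (i.e.\ $\zeta\in\Theta_{\rm reg}$) this fails for every $y$ outside the finite support of the residual divisor, while $h^0(D)\geq 2$ (i.e.\ $\zeta\in\operatorname{Sing}\Theta$) forces identical vanishing for every $y$. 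With that substitution your argument is a complete and self-contained proof; citing the classical references, as both you and the paper do, is of course also acceptable.
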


\begin{proof}
See \cite[p.~112, Theorem 1]{narasimhan-Riemann} for (a) and
\cite[p.~111, Lemma 2]{narasimhan-Riemann} for (b). Statement
(c) follows immediately from \cite[p.~97, Theorem 1]{narasimhan-Riemann}
applied to $A(y) + \zeta$.
\end{proof}

\begin{lemma}\label{lemc}

For any $\zeta \,\in\, \Theta_{\rm reg}$, the divisor $D_\zeta + D_{-\zeta}$ (see
Lemma \ref{narasimhan111}(b)) is a canonical divisor of $X$.
\end{lemma}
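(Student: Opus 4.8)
The plan is to compute the image of $D_\zeta+D_{-\zeta}$ under the Abel--Jacobi map $A$ (fixed before Definition \ref{def:Fy}) and to recognise it as the class of $K_X$.

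First I would observe that, $\theta$ being even, the divisor $\Theta$ of $\theta$ is invariant under $\zeta\,\longmapsto\,-\zeta$, and so is its smooth locus; hence $-\zeta\,\in\,\Theta_{\rm reg}$, and the divisor $D_{-\zeta}$ of Lemma \ref{narasimhan111}(b) is well defined. Applying Lemma \ref{narasimhan111}(c) to $\zeta$ and to $-\zeta$ yields
$$
A(D_\zeta)\,=\,\zeta-\kappa,\qquad A(D_{-\zeta})\,=\,-\zeta-\kappa,
$$
with $\kappa\,\in\,J^0(X)$ satisfying $2\kappa\,=\,-A(K_X)$. Adding these two identities, the terms $\pm\zeta$ cancel and one gets $A(D_\zeta+D_{-\zeta})\,=\,-2\kappa\,=\,A(K_X)$. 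Since $D_\zeta+D_{-\zeta}$ and $K_X$ are effective divisors of the same degree $2g-2$ with the same image under $A$, Abel's theorem gives $\mathcal{O}_X(D_\zeta+D_{-\zeta})\,\cong\,K_X$; being effective, $D_\zeta+D_{-\zeta}$ is therefore a canonical divisor of $X$.

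The only point requiring care is the cancellation above: I must know that the element $\kappa$ provided by Lemma \ref{narasimhan111}(c) is the same for $\zeta$ and for $-\zeta$. This holds because, as the proof of that lemma shows, $\kappa$ is the vector of Riemann constants determined by $X$, the base point $x_0$ and the chosen marking, and thus does not depend on the point of $\Theta_{\rm reg}$ to which the lemma is applied. Should one wish to avoid this remark, there is an alternative route through Proposition \ref{prop1}: pick $y_0\,\in\,X$ with neither $F_{y_0,\zeta}$ nor $F_{y_0,-\zeta}$ identically zero (possible by Lemma \ref{narasimhan111}(a), as $\pm\zeta\,\in\,\Theta_{\rm reg}$), restrict the section $s_\zeta$ of $\mathbf L$ along $x\,\longmapsto\,\phi(x,\,y_0)$, and note that by \eqref{def:sw} and Definition \ref{def:Fy} this restriction equals the product $F_{y_0,\zeta}\cdot F_{y_0,-\zeta}$, whose divisor is $2y_0+D_\zeta+D_{-\zeta}$ by Lemma \ref{narasimhan111}(b); comparing with the restriction of $\phi^*\mathbf L\,=\,p_1^*K_X\otimes p_2^*K_X\otimes\mathcal{O}_S(2\Delta)$ to $X\times\{y_0\}$, namely $K_X\otimes\mathcal{O}_X(2y_0)$, one again obtains $\mathcal{O}_X(D_\zeta+D_{-\zeta})\,\cong\,K_X$.
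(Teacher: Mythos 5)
Your main argument is exactly the paper's proof: apply Lemma \ref{narasimhan111}(c) to $\zeta$ and $-\zeta$, add to get $A(D_\zeta+D_{-\zeta})=-2\kappa=A(K_X)$, and conclude by Abel's theorem. Your added care about $-\zeta\in\Theta_{\rm reg}$ and the independence of $\kappa$ from $\zeta$ (it is the Riemann constant) is correct, and the alternative route via Proposition \ref{prop1} is a valid bonus, but the core approach is the same.
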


\begin{proof}
We have
\begin{gather*}
A(D_\zeta ) \ =\ \zeta - \kappa, \\
A(D_{-\zeta} ) \ =\ -\zeta - \kappa ,\\
A (D_\zeta + D_{-\zeta} ) \ = \ -2 \kappa = A(K_X) .
\end{gather*}
Therefore, $D_\zeta + D_{-\zeta}$ is linearly equivalent to a canonical divisor, and hence
it is a canonical divisor.
\end{proof}

\begin{definition}\label{def:omega:zeta}
  For $\zeta \,\in\, \Theta$, set
  \begin{equation}
    \label{eq:omega:zeta}
    \omega_\zeta\,\,:=\,\, \sum_{i=1}^g \desu[\theta{[\zeta]}]{z_i} (0)\cdot
    \omega_i \,\,\in\,\, H^0(X,\,K_X).
  \end{equation}
\end{definition}

\begin{lemma}\label{lemma:omega:zeta}
If $\zeta \,\in\, \Theta_{\rm reg}$, then
$\operatorname{div} \omega_\zeta \,=\, D_\zeta + D_{-\zeta}$.
\end{lemma}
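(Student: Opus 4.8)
The plan is to compute the divisor of the holomorphic one-form $\omega_\zeta$ by relating it to the pullback of the section $s_\zeta$ under the difference map $\phi$, and then using the geometric description of $D_\zeta$ from Lemma \ref{narasimhan111}. First I would observe that $\omega_\zeta$ has exactly $g-1$ zeros counted with multiplicity (being a nonzero holomorphic one-form, which it is since $\zeta\in\Theta_{\rm reg}$ forces some partial $\partial\theta[\zeta]/\partial z_i(0)$ to be nonzero — indeed $\zeta$ regular means the gradient of $\theta$ at $\zeta$ is nonzero, equivalently the gradient of $\theta[\zeta]$ at $0$ is nonzero). Meanwhile $D_\zeta + D_{-\zeta}$ is a canonical divisor by Lemma \ref{lemc}, so it too has degree $2g-2$. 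So we must show $\operatorname{div}\omega_\zeta$ contains both $D_\zeta$ and $D_{-\zeta}$; a degree count then forces equality.

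The key computation is this. Restrict the pulled-back section $\phi^*s_\zeta$ to a slice $X\times\{y\}$, or better, work directly with $F_{y,\zeta}$ from Definition \ref{def:Fy}. The function $F_{y,\zeta}(x) = \theta(\phi(x,y)-\zeta)$ vanishes along $y + D_\zeta$ by Lemma \ref{narasimhan111}(b). Now specialize $y$: as $y \to x_0$ (or for any fixed $y$), consider instead the behavior near the diagonal. The cleanest route is to differentiate. Set $y=x$: then $\phi(x,x)=0$, so $F_{x,\zeta}(x) = \theta(-\zeta) = \theta(\zeta) = 0$ since $\zeta\in\Theta$. Taking the derivative in $x$ of $\theta(\phi(x,y)-\zeta)$ and then setting $y = x$, using $\widetilde\phi(x,x)=0$ and $\partial_x\widetilde\phi_i(x,y)\big|_{y=x} = \omega_i$ (in a local coordinate, from \eqref{phitilde}), the first-order term is $\sum_i \frac{\partial\theta}{\partial z_i}(-\zeta)\,\omega_i = \sum_i \frac{\partial\theta}{\partial z_i}(\zeta)\,\omega_i$ up to the exponential factor converting $\theta(\cdot+\zeta)$ into $\theta[\zeta](\cdot)$; up to the nonvanishing scalar in \eqref{tzeta}, this is precisely $\omega_\zeta$. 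So $\omega_\zeta$ is, intrinsically, the "leading term along the diagonal" of the family of sections $F_{y,\zeta}$. Since $F_{y,\zeta}$ vanishes on $y+D_\zeta$ and $y\to x$ recovers the diagonal point with multiplicity one, the divisor $D_\zeta$ (which is independent of $y$) must be contained in $\operatorname{div}\omega_\zeta$. Applying the same reasoning to $-\zeta$, using $\theta(\phi(x,y)+\zeta) = \theta(\phi(y,x)-\zeta) = F_{x,-\zeta}(y)$ and the evenness of $\theta$ together with \eqref{tphi:minus}, gives $D_{-\zeta}\subset\operatorname{div}\omega_\zeta$.

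Having $D_\zeta + D_{-\zeta} \le \operatorname{div}\omega_\zeta$ as effective divisors, and both having degree $2g-2$, we conclude $\operatorname{div}\omega_\zeta = D_\zeta + D_{-\zeta}$.

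I expect the main obstacle to be making the "leading term along the diagonal" argument rigorous: one must carefully justify that the vanishing of $\omega_\zeta$ at a point $p\in D_\zeta$ follows from the vanishing of $F_{y,\zeta}$ at $p$ for all $y$, as $y$ specializes to the diagonal. One clean way to avoid the limiting argument entirely: fix a generic $y_1 \neq x_0$ with $F_{y_1,\zeta}\not\equiv 0$, write $\theta(\phi(x,y_1)-\zeta)$ as a section vanishing on $y_1 + D_\zeta$, and directly differentiate the identity obtained by expanding $\theta(\phi(x,y)-\zeta)$ near $y = x$ in the $y$-variable — but one sees $D_\zeta$ already appears in the zero-th order term $F_{y,\zeta}$ itself for every $y$, and letting $y$ vary over $X$ the only common zeros of all the $F_{y,\zeta}$ are exactly $D_\zeta$, and these persist in $\omega_\zeta$. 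Alternatively, and perhaps most cleanly, one can invoke that $\omega_\zeta$ is a well-defined section of $\phi^*(\text{something})$ restricted to the diagonal and use Proposition \ref{prop1} / Remark \ref{rem1}; but the divisor-theoretic argument via $F_{y,\zeta}$ is the most transparent.
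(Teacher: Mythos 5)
Your overall strategy has a genuine gap at the final step. You propose to show $D_\zeta\le\operatorname{div}\omega_\zeta$ and $D_{-\zeta}\le\operatorname{div}\omega_\zeta$ separately and then "force equality by a degree count." But two containments of effective divisors do not add: if $D_\zeta$ and $D_{-\zeta}$ share a point $p$, the two containments only give that $\omega_\zeta$ vanishes at $p$ to the \emph{maximum} of the two multiplicities, not their sum, so $D_\zeta+D_{-\zeta}\le\operatorname{div}\omega_\zeta$ does not follow and the degree count cannot close. This is not a corner case: for $\zeta$ an odd half-integer characteristic (precisely the $\zeta$'s used in the rest of the paper, where $\omega_\zeta=h_\zeta^2$), one has $-\zeta\equiv\zeta$ modulo the lattice and hence $D_\zeta=D_{-\zeta}$, so your two containments coincide and account for only $g-1$ of the $2g-2$ zeros of $\omega_\zeta$. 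The paper's proof avoids this entirely: it establishes only $D_\zeta\le\operatorname{div}\omega_\zeta$, writes $\operatorname{div}\omega_\zeta=D_\zeta+E$ with $E$ effective, observes that both $\operatorname{div}\omega_\zeta$ and $D_\zeta+D_{-\zeta}$ are canonical (Lemma \ref{lemc}) so that $E\sim D_{-\zeta}$, and then invokes $\dim|D_{-\zeta}|=0$ from Lemma \ref{narasimhan111}(b) to conclude $E=D_{-\zeta}$. That linear-equivalence step is the missing idea; it is what handles coincident supports and multiplicities automatically. (Incidentally, your parenthetical "exactly $g-1$ zeros" for a holomorphic $1$-form should read $2g-2$, as your next sentence assumes.)

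The first half of your argument (the containment $D_\zeta\le\operatorname{div}\omega_\zeta$) is the right idea but is not yet a proof, as you acknowledge. Be careful which variable you differentiate: the zeros along $D_\zeta$ live in the $x$-variable, and differentiating $F_{y,\zeta}(x)$ in $x$ a priori drops the order of vanishing at a point of $D_\zeta$ by one. The paper's route is to pass (via evenness of $\theta$) to $f(x,y)=\theta[\zeta](A(y)-A(x))$, write $f(x,y)=(x-x_0)^k h(x,y)$ when $k\cdot x_0\le D_\zeta$, differentiate in $y$ (which preserves the factor $(x-x_0)^k$), and restrict to the diagonal, where $\partial f/\partial y(t,t)=\omega_\zeta(t)$. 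Your $x$-derivative version can be rescued by also extracting the factor $(x-y)$ coming from $F(x,x)\equiv 0$, since that factor annihilates the offending term on the diagonal, but this needs to be said; the heuristic "leading term along the diagonal" is not by itself a justification that the zeros of $F_{y,\zeta}$ at points of $D_\zeta$, which occur off the diagonal for generic $y$, persist in $\omega_\zeta$.
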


\begin{proof}
First it will be shown that
  $D_\zeta \,\leq\, \operatorname{div} \omega_\zeta$. Assume that $k\cdot x_0 \,\leq  \, D_\zeta$.
This means that $F_{y,\zeta}\,=\, \theta (A(x) -A(y) - \zeta) $ has a zero of order at least $k$ at $x\,=\,x_0$.
As $\theta$ is an even function, it follows that
$ \theta(A(y) -A(x) +\zeta) $, and hence $ \theta[\zeta](A(y) -A(x)) $,
also has a zero of order at least $k$ at $x\,=\,x_0$.

Set, for simplicity,
\begin{gather*}
f(x,y)\ :=\ \theta[\zeta](A(y) -A(x))  .
\end{gather*}
For a generic $y \,\in\, X$, we have
$f(\cdot, \,y) \,\not\equiv\, 0$ and $f(\cdot,\, y)$ has a
zero of order at least $k$ at $x\,=\,x_0$. Therefore, locally we have
\begin{gather*}
  f(x,\,y)\ =\ (x-x_0)^k h(x,y)
\end{gather*}
for some holomorphic function $h$. Differentiating with respect to $y$
we find that $\partial f /\partial y$ also has a zero of order at least $k$ at $x\,=\,x_0$:
\begin{gather*}
  \desu[f]{y} (x,\,y)\ =\ (x-x_0)^k \desu[h]{y}(x,\,y).
\end{gather*}
In particular, restricting to the diagonal, we conclude that the function
    \begin{gather*}
      t\ \longmapsto\ \desu[f]{y} (t,\,t)
  \end{gather*}
has a zero of order at least $k$ at $t\,=\,x_0$.
Now differentiating with respect to $y$ we get the following:
\begin{gather*}
\desu[f]{y} (x,\,y)\ =\ \desu{y}
\ \theta[\zeta](A(y)-A(x)) \ =\ \sum_i \desu[\theta{[\zeta]}]{z_i} (A(y)-A(x)
)\cdot\omega_i (y) .
  \end{gather*}
  Hence
  \begin{gather*}
    \desu[f]{y}(t,t) = \,\, \sum_i \desu[\theta{[\zeta]}]{z_i} (0
    )\cdot \omega_i (t) = \omega_\zeta(t),
  \end{gather*}
in particular, $\omega_\zeta$ has a zero of order at least $k$ for $t\,=\,x_0$.
This implies that $k\cdot x_0 \,\leq\, \operatorname{div} \omega _\zeta$,
and thus we have $D_\zeta \,\leq\, \operatorname{div} \omega _\zeta$.  So
\begin{equation*}
D_\zeta + E \,\,=\,\, \operatorname{div} \omega _\zeta \,\,=:\,\,K',
\end{equation*}
where $E$ is effective and $K'$ is evidently a canonical
divisor. But $K''\,:=\,D_\zeta + D_{-\zeta}$ is also a canonical
divisor by Lemma \ref{lemc}. Consequently, $K' \,\sim\, K''$, and therefore
  \begin{equation*}
    E \ \sim\ D_{-\zeta}.
  \end{equation*}
  But by Lemma  \ref {narasimhan111}(b) we have $\dim|D_{-\zeta}|\,=\,0$. Hence 
  $E \,=\, D_{-\zeta}$ and 
  \begin{gather*}
    \operatorname{div} \omega _\zeta\ =\ D_\zeta + D_{-\zeta}.
  \end{gather*}
  This completes the proof.
\end{proof}

\begin{proposition}\label{prop:omega:zeta}\mbox{}
\begin{enumerate}
\item [(a)] For any $\zeta \,\in\, \Theta$,
$$s_\zeta(0) \,=\, 0.$$

\item [(b)] For any $\zeta \,\in\, \Theta_{\rm reg}$, the sections
$\phi^* (s_\zeta )$ and $p_1^*\omega_\zeta \wedge p_2^* \omega_\zeta$
have the same divisor ($s_\zeta$ and $\omega_\zeta$ are defined in
\eqref{def:sw} and \eqref{eq:omega:zeta} respectively, and $p_i$
is the projection in \eqref{ep}).
\end{enumerate}
\end{proposition}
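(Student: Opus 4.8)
The plan is to dispatch part (a) by a direct evaluation and part (b) by computing the divisor of $\phi^\ast s_\zeta$ on $S$ explicitly and matching it with that of $p_1^\ast\omega_\zeta\wedge p_2^\ast\omega_\zeta$. Part (a) is immediate: since $\theta$ is even, $s_\zeta(0)\,=\,\theta(-\zeta)\theta(\zeta)\,=\,\theta(\zeta)^2$, and this vanishes because $\zeta\,\in\,\Theta$ means $\theta(\zeta)\,=\,0$.

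For (b), both sections will be regarded as sections of $K_S(2\Delta)$ via the isomorphism of Proposition \ref{prop1}. First I would write $\phi^\ast s_\zeta(x,y)\,=\,\theta(\phi(x,y)-\zeta)\cdot\theta(\phi(x,y)+\zeta)$ and study the divisor of the factor $(x,y)\,\longmapsto\,\theta(\phi(x,y)-\zeta)$. Its restriction to a horizontal fiber $X\times\{y\}$ is the function $F_{y,\zeta}$ of Definition \ref{def:Fy}, while, since $\theta$ is even, its restriction to a vertical fiber $\{x\}\times X$ is $y\,\longmapsto\,\theta(A(y)-A(x)+\zeta)\,=\,F_{x,-\zeta}(y)$. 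As $\Theta$ is symmetric, $\zeta\,\in\,\Theta_{\rm reg}$ forces $-\zeta\,\in\,\Theta_{\rm reg}$, so Lemma \ref{narasimhan111}(b), applied to $\zeta$ and to $-\zeta$, gives that this divisor restricts to $y+D_\zeta$ on the generic fiber $X\times\{y\}$ and to $x+D_{-\zeta}$ on the generic fiber $\{x\}\times X$. From this I would conclude that
$$
\operatorname{div}\,\theta(\phi(\cdot,\cdot)-\zeta)\;=\;\Delta+p_1^\ast D_\zeta+p_2^\ast D_{-\zeta},
$$
since every irreducible component of the left side either dominates $X$ under both projections — in which case its intersection with a generic fiber lies in $\{y\}\cup\operatorname{supp}D_\zeta$, respectively $\{x\}\cup\operatorname{supp}D_{-\zeta}$, forcing it to be $\Delta$ — or is contracted by one of the projections, in which case it is a fiber whose position and multiplicity are read off from the two restrictions; the coefficient $1$ on $\Delta$ follows from the transversality of $\Delta$ with a generic horizontal fiber together with the fact that the coefficient of $y$ in $y+D_\zeta$ is $1$.

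Replacing $\zeta$ by $-\zeta$ then gives $\operatorname{div}\,\theta(\phi(\cdot,\cdot)+\zeta)\,=\,\Delta+p_1^\ast D_{-\zeta}+p_2^\ast D_\zeta$, and adding the two identities,
$$
\operatorname{div}\,\phi^\ast s_\zeta\;=\;2\Delta+p_1^\ast(D_\zeta+D_{-\zeta})+p_2^\ast(D_\zeta+D_{-\zeta})\;=\;2\Delta+p_1^\ast\operatorname{div}\omega_\zeta+p_2^\ast\operatorname{div}\omega_\zeta
$$
by Lemma \ref{lemma:omega:zeta}. On the other hand, $p_1^\ast\omega_\zeta\wedge p_2^\ast\omega_\zeta$, as a section of $K_S$, has divisor $p_1^\ast\operatorname{div}\omega_\zeta+p_2^\ast\operatorname{div}\omega_\zeta$, and the inclusion $K_S\hookrightarrow K_S(2\Delta)$ adds $2\Delta$ to it, so the two divisors coincide, which is the assertion of (b).

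The step I expect to be the main obstacle is the precise identification of $\operatorname{div}\,\theta(\phi(\cdot,\cdot)-\zeta)$. Three points need care: first, the bad locus $\{y\in X:F_{y,\zeta}\equiv 0\}$ must be shown to be a proper — hence, $X$ being a curve, finite — closed subset of $X$, so that the ``generic fiber'' statements are non-vacuous, and this is exactly where Lemma \ref{narasimhan111}(a) is used, as it supplies a point outside that locus; second, one must exclude any further component supported on special fibers, which reduces to the observation that a residual effective divisor that is a sum of fiber classes would restrict non-trivially to a generic fiber and hence must vanish; third, one must check that $\Delta$ occurs with multiplicity exactly one, for which the precise form of Lemma \ref{narasimhan111}(b) is what is needed. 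Everything else is routine bookkeeping with divisors on $X\times X$. As an alternative to this global analysis one could instead note that $\phi^\ast s_\zeta$ and $p_1^\ast\omega_\zeta\wedge p_2^\ast\omega_\zeta$ are sections of the same line bundle, so their ratio is a meromorphic function on $S$ whose restriction to a generic fiber of either projection has trivial divisor by the fiberwise computation above; such a function factors through both projections and is therefore constant, which gives the equality of divisors directly.
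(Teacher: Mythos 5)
Your proposal is correct and follows essentially the same route as the paper: part (a) is the same one-line evaluation, and part (b) rests on the identical ingredients (the factorization of $\phi^*s_\zeta$ into the two theta factors, Lemma \ref{narasimhan111} applied to $\pm\zeta$ via the symmetry of $\Theta$ and the evenness of $\theta$, and Lemma \ref{lemma:omega:zeta} to identify $D_\zeta+D_{-\zeta}$ with $\operatorname{div}\omega_\zeta$). The only difference is organizational — you determine the full divisor of each factor by an irreducible-component analysis, whereas the paper keeps the product together and reads off the divisor on the open set $(X\times B)\cup(B\times X)$ whose complement is finite — and both versions of this bookkeeping are sound.
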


\begin{proof}
  Fix $\zeta \,\in\, \Theta$.  It follows from
  \eqref{def:sw} that
  $s_\zeta (0) \,=\, \theta(\zeta) \theta(-\zeta) \,=\,
  \theta(\zeta)^2$.  This is zero because $\zeta \,\in\, \Theta$.  So
  (a) is proved.

Assume now that $\zeta \,\in\, \Theta_{\rm reg}$.
Set
\begin{gather*}
B_+\,\,:=\,\,\{y\,\in\, X\,\,\big\vert\,\, F_{y,\zeta}
\,\not\equiv\, 0\}, \ \, B_{-}\,\,:=\,\,\{y\,\in\, X\,\,\big\vert\,\, F_{y,-\zeta}
\,\not\equiv\, 0\}.
\end{gather*}
  $B_+$ and $B_-$ are Zariski open subsets of $X$. Set
  \begin{gather*}
    B\,\,:=\,\,B_-\cap B_+.
  \end{gather*}

  Since $\zeta \,\in\, \Theta_{\rm reg}$, and $\Theta$ is symmetric,
  we have $-\zeta \,\in\, \Theta_{\rm reg}$. It follows from Lemma
  \ref{narasimhan111} that $B_-$, $B_+$ and $B$ are nonempty. Note that
  $\phi(x,\,y) \,= \,A(x) - A(y) $, and hence using \eqref {def:sw} we get that
  \begin{gather}
    \label{eq:6}
    s_\zeta (\phi (x,\,y)) \,=\, \theta (A(x ) - A(y) - \zeta) \cdot
    \theta (A(x ) - A(y) + \zeta) \,=\, F_{y,\zeta}(x) \cdot
    F_{y,-\zeta}(x)
  \end{gather}
  By Lemma \ref{narasimhan111}, for any $y \,\in\, B$, we have
  \begin{gather*}
    \operatorname{div} F_{y,\zeta} \,=\, y + D_\zeta, \qquad
    \operatorname{div} F_{y,-\zeta} \,=\, y+ D_{-\zeta}.
  \end{gather*}
  This and \eqref{eq:6} together yield
  \begin{gather*}
    \operatorname{div} (\phi^*s_\zeta) \vert_{X\times \{y\}}\,\,=\,\,
    2 y + D_\zeta + D_{-\zeta} .
  \end{gather*}
  Using Lemma \ref{lemma:omega:zeta} we conclude that for any
  $y\,\in\, B$,
  \begin{gather*}
    \operatorname{div} (\phi^*s_\zeta) \vert_{X\times \{y\}} \,\,=\,\,
    2y + \operatorname {div} \omega_\zeta .
  \end{gather*}
  Hence
  \begin{gather*}
    \operatorname{div} (\phi^*s_\zeta) \vert_{X\times B} \,\,=\,\,
    2\Delta + p_1^* \operatorname{div} \omega_\zeta.
  \end{gather*}
  Since $\theta$ is even,
  \begin{gather*}
    s_\zeta (\phi (x,\,y)) \,\,= \,\, s_\zeta (\phi (y,\,x)).
  \end{gather*}
So, for $x \,\in\, B$,
\begin{gather*}
\operatorname{div} (\phi^*s_\zeta) \vert_{\{x\}\times X}\,=\,
\operatorname{div} (\phi^*s_\zeta) \vert_{X \times \{x\}}\,=\, 2 x
+ D_\zeta + D_{-\zeta} \,=\, 2x + \operatorname {div} \omega_\zeta .
\end{gather*}
Thus
\begin{gather*}
\operatorname{div} (\phi^*s_\zeta) \vert_{B\times X} \,\,=\,\,
2\Delta + p_2^* \operatorname{div} \omega_\zeta.
\end{gather*}
Hence on
$U\,:=\,(X\times B )\, \cup \, (B\times X) \,=\, S - (X-B) \times
(X-B)$, we have
\begin{gather*}
\operatorname{div} (\phi^*s_\zeta) \vert_{U} \,=\, 2\Delta + p_1^*
\operatorname{div} \omega_\zeta+ p_2^* \operatorname{div}
\omega_\zeta \,=\, 2\Delta + \operatorname{div} \left( p_1^*
\omega_\zeta\wedge p_2^* \omega_\zeta \right ) .
\end{gather*}
  (Note that this shows that the support of
  $p_1^*\operatorname{div} \omega_\zeta$ is contained in $X -B$. This
  can be checked explicitly; see Remark \ref{Bpm} below.)  As $X-B$ is
  a finite set, the complement of $U$ in $S$ is finite, and hence
\begin{gather}\label{divisor:divisor}
    \operatorname{div} (\phi^*s_\zeta) \,=\, 2\Delta +
    \operatorname{div} \left( p_1^* \omega_\zeta\wedge p_2^*
      \omega_\zeta \right ).
  \end{gather}
This completes the proof.
\end{proof}

The right-hand side in \eqref{divisor:divisor} is the divisor of
$ p_1^* \omega_\zeta\wedge p_2^* \omega_\zeta $ viewed as a section of
$K_S(2\Delta)$. Hence Proposition \ref{prop:omega:zeta}(b) gives an
independent proof of Proposition \ref{prop1}.

\begin{remark}
  \label{Bpm} From the proof of Proposition \ref{prop:omega:zeta} one
  might be inclined to guess that $B\,=\,X - \operatorname{supp} (D_\zeta +
  D_{-\zeta})$. This is indeed true. In fact we claim that
  $B_+ \,=\, X -  \operatorname{supp} D_{-\zeta}$ and $B_-\,=\,X -  \operatorname{supp} D_\zeta$. To prove this,
  observe that
  \begin{gather*}
    F_{y,\zeta}(x) \,=\, \theta(A(x) -A(y) - \zeta) \,=\, \theta(A(y)
    -A(x) + \zeta)\, =\, F_{x, -\zeta}(y).
  \end{gather*}
  Now, if $y \,\not\in\, B_+$, then $F_{y,\zeta}(x) \,=\,0$ for all
  $x$, so $F_{x,-\zeta}(y) \,=\,0$ for all $x$. So $y$ belongs to the
  support of the divisor $x + D_{-\zeta}$ for all $x $. Hence we have
  $y\,\in\, \operatorname{supp} D_{-\zeta}$. Conversely, if
  $y\,\in\, \operatorname{supp} D_{-\zeta}$, then $y$ belongs to the
  support of $x + D_{-\zeta}$ for all $x$, and hence
  $F_{x,-\zeta}(y) \,=\,0$ meaning $F_{y,\zeta } \,\equiv\, 0$.  This
  proves the claim.  It follows immediately that
  $B\,=\,X - \operatorname{supp} (D_\zeta + D_{-\zeta})$.
\end{remark}

\section{Pullback of arbitrary sections of $\mathbf L$}
\label{section:5}

Out next step is to compute the pullback of an arbitrary section of
$\mathbf L$.  This is achieved using second order theta functions as
developed in the unpublished book \cite{Gu3}. In particular we use
Fay's Trisecant Formula; see Theorem \ref {thm:trisecant} below.

Recall that a characteristic $\zeta \,\in\, \mathbb C^g$ is called
\emph{half-integer} if
$ 2\zeta\,\in\, \mathbb{Z}^g+\tau\mathbb{Z}^g $, while it is called
\emph{odd} (respectively, \emph{even}) if $\theta[\zeta]$ is an odd
(respectively, even) function. It is straightforward to see that any
even or odd characteristic is half-integer \cite[p.~20, A]{Gu3}.

An odd characteristic $\zeta$ is \emph{regular} if not all partial
derivatives of $\theta[\zeta]$ vanish at the origin. Such
characteristics exist on any smooth projective curve (see \cite[p.~16,
Remark]{Fay}).

A direct computation using \eqref{tzeta} shows that for any odd
characteristic $\zeta$ we have
\begin{equation}
  \label{szeta}
  \theta[\zeta]^2\,\,=\,\, c(\zeta) \cdot s_\zeta, \qquad  c(\zeta) \,\,=\,\, e^{2\pi \sqrt{-1}\langle
    a, \tau a \rangle},
\end{equation}
where $\zeta \,=\, \tau a + b$. Notice that from the fact that $\theta[\zeta]$ is odd
it follows immediately that we have $\zeta \,\in\, \Theta$.

Several classical constructions and results are concerned with odd
regular characteristics. We will now recall two of them that are
important for our purpose. If $\alpha \,\in\, H^0(S,\,K_S(2\Delta))$
and $c \,\in\, H_1(X,\,\mathbb Z)$, then one can define the integrals
\begin{gather*}
  \int_{p\in c} \alpha (p,\,q) \qquad \text{and} \qquad \int_{q\in c}
  \alpha (p,\,q).
\end{gather*}
These are holomorphic 1-forms on $X$.  See \cite[p.~115]{Gu2} for
details.

\begin{proposition}\label{canonical:bidifferential}
Let $\{a_i,\, b_i\}$ be a symplectic basis of $H_1(X,\,\mathbb Z)$,
and let $\omega_i$ be the normalized differentials as in \eqref{nf}. Then there is a unique
$\Omega \,\,\in\,\, H^0(S,\, K_S(2\Delta))^-$ (see \eqref{ka4}) that
satisfies the period conditions
\begin{gather*}
\int_{q\in a_k} \Omega(p,\,q)\,=\,0,\qquad \int_{q\in b_k}
\Omega(p,\,q)\,=\,2\pi \sqrt{-1}\omega_k(p).
\end{gather*}
This $\Omega$ is called the \emph{canonical bidifferential} or
\emph{double differential} of $X$ with respect to the symplectic
basis $\{a_i,\,b_i\}$. It has a double pole on the diagonal with
biresidue 1. Let $\zeta$ be an odd regular characteristic, and let
$z\,:\,U\,\longrightarrow\, \mathbb{C}$ be a holomorphic coordinate
function on a contractible subset $U\,\subset\, X$. Set
$z_i\,=\,p_i^\ast z$. Identify $U$ with one of its liftings to
the universal cover $\widetilde X$ of $X$, and set
  \begin{gather*}
    \due\,:\, U\times U \,\longrightarrow\, \mathbb C, \qquad \due
    (z_1,\, z_2)\,:=\, \theta[\zeta](\tphi( s(z_1),\,s( z_2) ).
  \end{gather*}
  Then
  \begin{gather}\label{canonical:log}
    \Omega\,\ =\,\ \frac{\partial^2 \log \due}{\partial z_1\partial
      z_2} \, dz_1\wedge dz_2.
  \end{gather}
  In particular, this derivative is independent of the section $s$.
\end{proposition}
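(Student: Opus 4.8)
The plan is to establish the existence and uniqueness of $\Omega$ first, then identify it with the logarithmic derivative in \eqref{canonical:log}. For existence and uniqueness, I would argue that the period map $\alpha \longmapsto \left(\int_{q\in a_k}\alpha,\, \int_{q\in b_k}\alpha\right)$ is a linear map from $H^0(S,\,K_S(2\Delta))^-$ to $(H^0(X,\,K_X))^{2g}$. By Lemma \ref{narasimhan111} and the dimension counts recalled in Section \ref{section:two} (in particular $\dim H^0(S,\,K_S(2\Delta))^- = g(g+1)/2 + 1$), together with the fact that a form in $H^0(S,\,K_S)^-$ with vanishing $a$-periods on every slice must itself vanish (since each slice $\int_{q\in c}\alpha(p,\cdot)$ is a holomorphic $1$-form and these reconstruct $\alpha$ via its residue-free, pole-order-$\leq 2$ structure on $\Delta$), one sees the period map has trivial kernel on the subspace of forms with $a$-periods zero, and a residue computation forces biresidue $1$ when the $b$-periods are $2\pi\sqrt{-1}\,\omega_k$. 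This is essentially classical (Fay, Gunning); I would cite \cite[p.~115]{Gu2} and \cite{Fay} for the bulk of it rather than reprove it.

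The substantive part is verifying that the explicit form in \eqref{canonical:log} has the stated periods, the correct polar behavior, and the right $f_X$-parity. First I would check well-definedness: although $\beta(z_1,z_2) = \theta[\zeta](\tphi(s(z_1),s(z_2)))$ depends on the lift $s$ and on the branch, changing these multiplies $\beta$ by a factor of the form $e^{L(z_1,z_2)}$ with $L$ affine separately in an appropriate sense (coming from the automorphy factor of $\theta[\zeta]$ along lattice translations), so $\partial^2 \log\beta/\partial z_1\partial z_2$ is unchanged — this gives the last sentence of the proposition. Next, the polar part: near the diagonal, $\tphi(s(z_1),s(z_2))$ vanishes to first order along $z_1 = z_2$ by \eqref{tphi:minus}, and since $\zeta$ is odd regular, $\theta[\zeta]$ vanishes to first order at $0$ with nonzero gradient $\sum_i \theta[\zeta]_i(0)\,\omega_i = \omega_\zeta$; hence $\beta$ has a simple zero along $\Delta$, so $\log\beta$ has a logarithmic singularity and $\partial^2\log\beta/\partial z_1\partial z_2$ has a double pole along $\Delta$. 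A local Taylor expansion $\beta(z_1,z_2) = c\cdot(z_1-z_2)\cdot(1 + O(z_1-z_2))$ gives $\partial^2\log\beta/\partial z_1\partial z_2 = -1/(z_1-z_2)^2 + (\text{holomorphic})$, i.e. biresidue $1$ after accounting for the sign convention in $dz_1\wedge dz_2$. Away from $\Delta$, $\theta[\zeta]\circ\tphi$ vanishes exactly on $\phi^{-1}(\Theta_\zeta)$, which by Lemma \ref{narasimhan111} is $p_1^*(\operatorname{div}\omega_\zeta) \cup p_2^*(\operatorname{div}\omega_\zeta)$ up to the diagonal contribution — but crucially these are of the form $\{x_k\}\times X$ and $X\times\{x_k\}$, so $\partial^2\log\beta/\partial z_1\partial z_2$, being a mixed second derivative, kills any summand depending on $z_1$ alone or $z_2$ alone, hence is genuinely holomorphic off $\Delta$. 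This is the mechanism that makes the construction work, and it is where I expect to spend the most care.

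For the periods: transporting $z_1$ around a cycle $c \in H_1(X,\mathbb Z)$ changes $\tphi(s(z_1),s(z_2))$ by the corresponding period vector of $(\omega_1,\dots,\omega_g)$, so $\log\beta$ changes by the logarithm of the automorphy factor of $\theta[\zeta]$, which is an affine function of $\tphi$ plus a constant; differentiating in $z_2$ then in $z_1$... more precisely, $\int_{q\in c}\Omega(p,q) = \int_{q\in c}\partial_{z_1}\partial_{z_2}\log\beta\; dz_2 = \partial_{z_1}\left[\partial_{z_2}\log\beta\right]_{q \text{ around } c}\,dz_1$, and the jump of $\partial_{z_2}\log\beta$ around $c$ is exactly (the $z_2$-derivative of) the log-automorphy factor, which for $a_k$ is $0$ and for $b_k$ is $-2\pi\sqrt{-1}$ times the $k$-th coordinate — yielding $\int_{q\in a_k}\Omega = 0$ and $\int_{q\in b_k}\Omega = 2\pi\sqrt{-1}\,\omega_k$ as claimed. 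Finally, $f_X$-anti-invariance is immediate from $\tphi(s(z_2),s(z_1)) = -\tphi(s(z_1),s(z_2))$ (by \eqref{tphi:minus}) and the evenness of $\theta[\zeta]$ parity... wait — $\theta[\zeta]$ is odd, so $\beta(z_2,z_1) = \theta[\zeta](-\tphi(s(z_1),s(z_2))) = -\beta(z_1,z_2)$; the sign is absorbed into $\log$ as an additive constant and disappears under the second derivative, while the swap $dz_1\wedge dz_2 \mapsto dz_2\wedge dz_1 = -dz_1\wedge dz_2$ supplies the minus sign, so $f_X^*\Omega = -\Omega$, placing $\Omega$ in $H^0(S,\,K_S(2\Delta))^-$. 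Having matched period conditions, parity, pole order and biresidue, uniqueness from the first part forces the equality \eqref{canonical:log}. The main obstacle, as noted, is the careful bookkeeping of automorphy factors to pin down both the vanishing of the $a$-periods and the precise constant $2\pi\sqrt{-1}$ in the $b$-periods; everything else is either classical or a local Taylor computation.
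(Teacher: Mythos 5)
Your proposal is correct in substance but reaches \eqref{canonical:log} by a genuinely different route. Like the paper, you defer the existence, uniqueness, polar behaviour and period conditions of $\Omega$ to the classical references (Fay, Gunning); the difference is in how the identity \eqref{canonical:log} is established. The paper uses the prime form: by Fay's Definition 2.1 one has $E(z_1,z_2)=\due(z_1,z_2)/\bigl(h_\zeta(z_1)h_\zeta(z_2)\bigr)$ with $h_\zeta^2=\omega_\zeta$, and by Fay's formula (28) $\Omega=\partial^2\log E/\partial z_1\partial z_2\,dz_1\wedge dz_2$; since $h_\zeta(z_1)$ and $h_\zeta(z_2)$ are separated, they are annihilated by the mixed second derivative and \eqref{canonical:log} follows in one line. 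You instead verify directly that $\partial^2\log\due/\partial z_1\partial z_2\,dz_1\wedge dz_2$ is a well-defined global section of $H^0(S,K_S(2\Delta))^-$ (the automorphy factors are exponentials of functions affine in $\tphi$, hence killed by the mixed partial), holomorphic off $\Delta$ (the off-diagonal zero divisor of $\due$ is a union of horizontal and vertical fibres by Lemma \ref{narasimhan111}, so again contributes only separated summands to $\log\due$), with biresidue $1$ and the stated $a$- and $b$-periods, and then invoke uniqueness. In effect you reprove Fay's formula (28) rather than quoting it; your argument is more self-contained and correctly isolates the one mechanism (mixed partials annihilate separated summands of $\log\due$) that also powers the paper's one-line deduction.

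Two small slips, neither fatal. First, from $\due=(z_1-z_2)\,u$ with $u$ nonvanishing near a generic diagonal point one gets $\partial_{z_1}\partial_{z_2}\log(z_1-z_2)=+1/(z_1-z_2)^2$, not $-1/(z_1-z_2)^2$; the biresidue is $+1$ directly and there is no residual sign convention to absorb. Second, in the period computation the variable carried around the cycle $c$ is the second one, $q$, not $z_1$: the jump of $\partial_{z_1}\log\due$ as $z_2$ traverses $b_k$ equals $2\pi\sqrt{-1}\,h_k(z_1)$, which is what yields $2\pi\sqrt{-1}\,\omega_k(p)$; for $a_k$ the automorphy factor is constant and the jump vanishes.
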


\begin{proof}
See \cite[p.~20, Corollary 2.6]{Fay} and \cite[p.~125, Theorem
4.23]{Gu2}. In the former reference $\Omega$ is denoted by $\omega$
and in the latter by $\widehat{\mu}_M$. To prove
\eqref{canonical:log} one needs a different definition for $\Omega$.
Let $E$ denote the prime form of $X$. This is a section of
$\mathcal O_S (\Delta)$. For each regular odd half-period $\zeta$, the corresponding
theta characteristic $L_\zeta$ has a section $h_\zeta$ such that
$h_\zeta ^2 \,=\,  \omega_\zeta$ (in \cite{Fay}, $\omega_\zeta$ is called
$H_\zeta$). Then the following holds:
\begin{gather*}
E (z_1, z_2) \,\,=\,\, \frac{\beta(z_1, z_2) } {
h_\zeta (z_1) \cdot h_\zeta(z_2)};
  \end{gather*}
  see \cite [p.~16, Definition 2.1]{Fay}. On the other hand, by formula (28) in
  \cite[p. 20]{Fay},
  \begin{gather*}
    \Omega \,\,=\,\,\frac{\partial^2 \log E}{\partial
      z_1\partial z_2} \, dz_1\wedge dz_2.
  \end{gather*}
  From this \eqref{canonical:log} follows immediately.
\end{proof}

In the following we use the map  $\widetilde\phi$ constructed in \eqref{phitilde}.

\begin{theorem}[{Fay's Trisecant Identity}]
  \label{thm:trisecant}
  Let $\zeta$ be a regular odd characteristic. Let
  $w\,\in\, \mathbb{C}^g$, and let $z_1,\,z_2,\,a_1,\,a_2$ be points of
  the universal cover $\widetilde{X}$ of $X$. Then the following
  holds:
  \begin{equation}\label{trisecant}
    \begin{gathered}
      \theta (w + \tphi (z_1, \,z_2) ) \cdot \theta (w + \tphi (a_2,\,
      a_1) ) \cdot \theta[\zeta] ( \tphi(z_1,\, a_1 )) \cdot
      \theta[\zeta]( \tphi(z_2, \,a_2 ))
      \\
      =\, \theta (w ) \cdot \theta (w + \tphi (z_1, \,z_2) +\tphi
      (a_2, \,a_1) ) \cdot \theta[\zeta]
      ( \tphi(z_1, \,a_2 )) \cdot  \theta[\zeta] (  \tphi(z_2,\, a_1 )) +\\
      + \theta (w + \tphi (z_1, \,a_1) ) \cdot \theta (w + \tphi
      (a_2,\, z_2) ) \cdot \theta[\zeta] ( \tphi(z_1,\, z_2 )) \cdot
      \theta[\zeta] ( \tphi(a_1, \,a_2 )).
    \end{gathered}
  \end{equation}
\end{theorem}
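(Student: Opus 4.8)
The plan is to fix $w\in\mathbb C^g$ together with the three points $z_2,a_1,a_2$, and to treat both sides of \eqref{trisecant} as holomorphic functions of the single variable $p:=z_1$ on $\widetilde X$. The first step is a quasi‑periodicity check. In the variable $p$, the left‑hand side of \eqref{trisecant} is a constant times $\theta(w+\tphi(p,z_2))\,\theta[\zeta](\tphi(p,a_1))$, the first term on the right is a constant times $\theta(w+\tphi(p,z_2)+\tphi(a_2,a_1))\,\theta[\zeta](\tphi(p,a_2))$, and the second term is a constant times $\theta(w+\tphi(p,a_1))\,\theta[\zeta](\tphi(p,z_2))$. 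Writing $\zeta=\tau a+b$ and letting $A$ denote the (lifted) Abel--Jacobi map, so that $\tphi(p,u)=A(p)-A(u)$, one checks from the quasi‑periodicity of $\theta$ and of $\theta[\zeta]$ that under a deck transformation carrying $p$ to $p+\gamma$, $A(\gamma)=m+\tau n$, each of the three products is multiplied by the same factor $e^{2\pi\sqrt{-1}(\langle a,m\rangle-\langle b,n\rangle)}\,e^{-2\pi\sqrt{-1}\langle n,\tau n\rangle}\,e^{-2\pi\sqrt{-1}\langle n,\,w+2A(p)-A(z_2)-A(a_1)\rangle}$; the equality of the multipliers holds because in each of the three products the sum of the two $\theta$‑arguments equals $w+2A(p)-A(z_2)-A(a_1)$, and the character by which $\theta[\zeta]$ differs from $\theta$ is the same in all three cases. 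Hence the difference $\Phi(p)$ of the two sides of \eqref{trisecant} descends to a holomorphic section of one line bundle $L$ on $X$, and the Riemann vanishing theorem applied to one factor gives $\deg L=2g$.

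The second step is to exhibit zeros of $\Phi$, using that $\zeta$ is odd, i.e.\ $\theta[\zeta](0)=0$. Taking $p=a_1$ kills the left‑hand side, since it contains $\theta[\zeta](\tphi(a_1,a_1))=\theta[\zeta](0)$; and, after rewriting $\tphi(a_1,z_2)+\tphi(a_2,a_1)=\tphi(a_2,z_2)$, the two terms on the right become negatives of one another by the oddness of $\theta[\zeta]$, so $\Phi(a_1)=0$. Taking $p=a_2$ kills the first term on the right via $\theta[\zeta](\tphi(a_2,a_2))=0$, and taking $p=z_2$ kills the second term via $\theta[\zeta](\tphi(z_2,z_2))=0$; in each case a short use of the oddness of $\theta[\zeta]$ shows that the left‑hand side equals the surviving term on the right, so $\Phi(a_2)=\Phi(z_2)=0$. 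When $g=1$ this already proves the identity: $\deg L=2$ and $\Phi$ is a section of $L$ vanishing at three (generically distinct, simple) points, hence $\Phi\equiv0$, and the general $z_1,z_2,a_1,a_2$ and non‑generic configurations follow by continuity.

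For arbitrary $g$ three zeros of a degree‑$2g$ section are not enough, and closing this gap is the crux. The way I would finish is to re‑read \eqref{trisecant} as an identity in the variable $w$: all three terms have the shape $(\text{const})\cdot\theta(w+u)\theta(w+v)$ with $u+v$ the same in the three cases, equal to $\tphi(z_1,z_2)+\tphi(a_2,a_1)$; by Riemann's bilinear addition theorem each such product equals $\sum_\epsilon \Theta_\epsilon(w+s)\,\Theta_\epsilon(r)$, where $\{\Theta_\epsilon\}$ is the basis of $H^0(J^0(X),\mathbf L)$ given by the second‑order theta functions (the formalism of \cite{Gu3}), $2s=u+v$ and $2r=u-v$. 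Since $w\mapsto w+s$ is a translation and the $\Theta_\epsilon$ are linearly independent, \eqref{trisecant} becomes equivalent to the family of scalar identities obtained by equating, for each $\epsilon$, the coefficient of $\Theta_\epsilon(w+s)$ — quadratic relations among second‑order theta functions evaluated at Abel--Jacobi points. These last relations are classical; they can be obtained from the prime‑form and third‑kind‑differential computation of Fay \cite[Cor.~2.10]{Fay} (the prime form $E$ already appears in the proof of Proposition~\ref{canonical:bidifferential}), or from \cite{Gu2,Gu3}. I expect the genuine obstacle to be exactly this reduction together with the residue/zero bookkeeping it requires; the quasi‑periodicity verification and the special‑value computations of the first two steps are routine, and in the write‑up one may as well either reproduce the prime‑form argument or simply cite \cite{Fay,Gu2,Gu3}.
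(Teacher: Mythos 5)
Your first two steps are sound: the quasi-periodicity bookkeeping is right (in each of the three products the two theta-arguments sum to $w+2A(z_1)-A(z_2)-A(a_1)$ and the character of $\theta[\zeta]$ enters once, so the difference of the two sides descends to a section of a single line bundle of degree $2g$ on $X$), and the three vanishings at $z_1=a_1,a_2,z_2$ are correctly verified using the oddness of $\theta[\zeta]$. As you say, this settles only $g=1$.

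The problem is that your third step, which is supposed to close the gap for $g\geq 2$, is not a proof but a restatement. Expanding each term $\theta(w+u)\theta(w+v)$ by the addition theorem and equating coefficients of the second-order theta functions $\Theta_\epsilon(w+s)$ reduces \eqref{trisecant} to the collinearity in $\mathbb{P}^{2^g-1}$ of the three Kummer images of $\tfrac12(A(z_1)-A(z_2)-A(a_2)+A(a_1))$, $\tfrac12(A(z_1)-A(z_2)+A(a_2)-A(a_1))$ and $\tfrac12(A(z_1)+A(z_2)-A(a_2)-A(a_1))$ --- and that collinearity \emph{is} Fay's trisecant identity; it is precisely why the identity carries that name. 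So the ``classical quadratic relations'' you propose to cite are the statement being proved, and your argument is circular modulo the citation. To be fair, the paper also proves the theorem by citation: it starts from the explicit equations (1-1) and (1-2) of \cite{poor}, uses the oddness of $\theta[\zeta]$ and the regularity of $\zeta$ to eliminate the cross-ratio function, rewrites $\theta(\zeta+\cdot)$ as $\theta[\zeta]$ via \eqref{tzeta}, and clears denominators to arrive at \eqref{trisecant}. The difference is that the paper's route is a finite, checkable algebraic derivation from a precisely identified published formula, whereas yours defers the entire content to an unspecified ``classical'' fact (and the pointer to \cite[Cor.~2.10]{Fay} does not obviously match the relations you need). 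If you want a genuine proof you must either carry out the prime-form/third-kind-differential argument in full, or do what the paper does and derive \eqref{trisecant} by explicit manipulation from a concretely cited version of the identity; the degree-$2g$ section argument of your first two steps then adds value only as an independent check in genus one.
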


\begin{proof}
  Fay's Trisecant Identity takes many forms. To get the equality in \eqref{trisecant}, start
  from equation (1-1) in \cite{poor}. Note that $\theta[\zeta]$ is odd,
and hence \eqref {tphi:minus} implies that
  $\theta[\zeta ] (\tphi(a_1,\,z_2)) \,= \,- \theta[\zeta ]
  (\tphi(z_2,\,a_1))$. Since $\zeta$ is a regular point of $\Theta$,
  equation (1-2) from the same paper holds true for
  $\alpha \,=\, \zeta$. Substituting this, we get rid of the
  cross-ratio function.  Using \eqref{tzeta} one replaces
  $\theta (\zeta + \cdot)$ with $\theta[\zeta]$ and the exponentials
  cancel out. Finally clearing denominators we arrive at
  \eqref{trisecant}.
\end{proof}

Let $u_i$ and $u_{ij}$ denote the partial derivatives of a function
$u$ on $\mathbb C^g$.

\begin{proposition}
  \label{prop:pullback}
  If $f\,\in\, H^0(J^0(X),\,\Thetatwo)$, and $\zeta$ is a regular odd
  characteristic, then
  \begin{equation}
    \label{eq:pullback}
    f\circ \tphi\,\cdot \, p_1^\ast \omega_\zeta\wedge p_2^\ast \omega_\zeta \,\,= \,\,\theta[\zeta]^2\circ \tphi\, \cdot \,\left(
      f(0)\, \Omega+\frac{1}{2} \sum_{i,j=1}^g  f_{ij}(0)\, p_1^\ast \omega_i\wedge p_2^\ast \omega_j\right),
  \end{equation}
  where $\Omega$ is the canonical bidifferential as in Proposition
  \ref{canonical:bidifferential}.
\end{proposition}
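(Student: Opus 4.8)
The plan is to obtain the identity in \eqref{eq:pullback} by a limiting argument applied to Fay's Trisecant Identity \eqref{trisecant}, differentiated twice. Since the sections $s_\zeta$ of the form \eqref{def:sw} generate $H^0(J^0(X),\mathbf L)$, and both sides of \eqref{eq:pullback} are linear in $f$, it suffices to prove the formula for $f = s_w$ for an arbitrary $w \in \mathbb C^g$ (with $f(0) = \theta(w)^2$ and $f_{ij}(0)$ computed from $s_w(z) = \theta(z-w)\theta(z+w)$). In Fay's identity, take the characteristic $\zeta$ regular odd and specialize the four points so as to let $a_1 \to z_1$ and $a_2 \to z_2$ simultaneously; in that limit the factors $\theta[\zeta](\tphi(z_1,a_1))$ and $\theta[\zeta](\tphi(z_2,a_2))$ on the left, as well as $\theta[\zeta](\tphi(a_1,a_2))$ on the right of the second product, all vanish (since $\theta[\zeta]$ is odd, $\theta[\zeta](0)=0$), so one must extract the leading term of a second-order Taylor expansion on both sides.

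Concretely, set $a_1 = z_1 + \epsilon u$, $a_2 = z_2 + \epsilon v$ for fixed tangent vectors $u, v$ and let $\epsilon \to 0$. On the left-hand side of \eqref{trisecant}, the product $\theta[\zeta](\tphi(z_1,a_1))\theta[\zeta](\tphi(z_2,a_2))$ is of order $\epsilon^2$, with leading coefficient a product of two directional derivatives of $\theta[\zeta]$ at $0$, each of which is, up to the local coordinate factor, $\omega_\zeta$ evaluated at $z_1$ (resp. $z_2$) against the tangent vector — this is exactly where Definition \ref{def:omega:zeta} and Lemma \ref{lemma:omega:zeta} enter, producing the factor $p_1^\ast\omega_\zeta \wedge p_2^\ast\omega_\zeta$. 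The first product on the right-hand side, with the factor $\theta(w + \tphi(z_1,z_2) + \tphi(a_2,a_1))$, is $O(1)$ in $\epsilon$ but its $\theta[\zeta]$-factors $\theta[\zeta](\tphi(z_1,a_2))\theta[\zeta](\tphi(z_2,a_1))$ vanish to order $\epsilon^0$ only if $z_1 \neq z_2$; away from the diagonal this term survives and after dividing by $\epsilon^2$ it contributes the term $\tfrac12\sum f_{ij}(0)\,p_1^\ast\omega_i\wedge p_2^\ast\omega_j$ once one uses Lemma \ref{allthetalemma} to rewrite the second derivatives of $\theta[\zeta]^2$ in terms of those of $s_\zeta$. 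The second product on the right, carrying $\theta[\zeta](\tphi(a_1,a_2))$, is the one that reconstructs $f(0)\,\Omega$: dividing by $\theta[\zeta](\tphi(z_1,z_2))$ and by the $\epsilon^2$ from the left, and recognizing via \eqref{canonical:log} that $\partial^2_{z_1 z_2}\log\theta[\zeta](\tphi(z_1,z_2))$ is exactly $\Omega$, yields the $\theta(w)^2\,\Omega = f(0)\,\Omega$ term.

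After collecting the three contributions, one divides through by $\theta[\zeta]^2\circ\tphi$ evaluated appropriately, or rather multiplies the $\Omega$ and $\omega_i\wedge\omega_j$ side by it, to land on the stated form where $\theta[\zeta]^2\circ\tphi$ multiplies the bracket; the normalization constant $c(\zeta)$ from \eqref{szeta} and Lemma \ref{allthetalemma} cancels against itself. One should check that the local-coordinate factors $dz_1, dz_2$ match up on both sides so that the identity is genuinely an equality of sections of $K_S(2\Delta) \otimes K_S$ (equivalently $p_1^\ast K_X^{\otimes 2}\otimes p_2^\ast K_X^{\otimes 2}\otimes\mathcal O_S$), independent of the chart, which is guaranteed by the last sentence of Proposition \ref{canonical:bidifferential}. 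Finally, since the resulting identity of meromorphic sections holds on the dense open set where $z_1 \neq z_2$ and $\theta[\zeta]\circ\tphi \not\equiv 0$ in each variable, it extends to all of $S$ by continuity.

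The main obstacle is bookkeeping the second-order expansion cleanly: one must differentiate a product of eight theta-factors in \eqref{trisecant} with respect to two independent parameters and keep track of which terms are $O(1)$, $O(\epsilon)$, $O(\epsilon^2)$, being careful that the $O(\epsilon)$ cross-terms cancel (they do, by the antisymmetry \eqref{tphi:minus} and the oddness of $\theta[\zeta]$) so that the $O(\epsilon^2)$ coefficients give a clean identity. A secondary subtlety is correctly identifying the logarithmic-derivative expression for $\Omega$ coming out of the third term with the canonical bidifferential of Proposition \ref{canonical:bidifferential}, and making sure the chain rule through $\tphi$ (whose differential is the matrix $(\omega_i)$) is applied consistently so the $\omega_\zeta$'s and $\omega_i$'s appear with the right indices.
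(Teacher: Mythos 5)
Your strategy is essentially the paper's: reduce to $f=s_w$, differentiate Fay's Trisecant Identity twice at the coincidence point $a_1=z_1$, $a_2=z_2$ (your $\epsilon^2$-coefficient is exactly the mixed partial $\partial_{a_1}\partial_{a_2}$ at $a=z$, since the pure $\partial_{a_1}^2$ and $\partial_{a_2}^2$ contributions are killed by the vanishing of $\theta[\zeta]$ at the origin), and identify $\Omega$ via the logarithmic-derivative formula \eqref{canonical:log}. One caveat on your roadmap for the right-hand side: the attribution of the two output terms to the two products is not correct as stated. Writing $\due=\theta[\zeta]\circ\tphi$, the combination $\due_{12}\due-\due_1\due_2$ that produces $f(0)\,\Omega$ draws one piece from \emph{each} of the two products (the $\due_{12}\due$ part from the product carrying $\theta[\zeta](\tphi(z_1,z_2))$, the $-\due_1\due_2$ part from the other), and likewise the $\tfrac12 f_{ij}(0)$ term $\theta(w)\theta_{ij}(w)-\theta_i(w)\theta_j(w)$ combines contributions from both; neither product alone yields either term. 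Also, Lemma \ref{allthetalemma} and the constant $c(\zeta)$ play no role here (they enter only in Proposition \ref{prop:operativepullback}); on the left-hand side only Definition \ref{def:omega:zeta} is needed, not Lemma \ref{lemma:omega:zeta}. With those corrections the bookkeeping closes exactly as in the paper.
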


\begin{proof}
  Choose $w\,\in\, \mathbb C^g$, and set $f\,:=\, s_w$. Since the collection of
  sections $s_w$, with $w$ varying over $\mathbb C^g$, generate
  $H^0(J^0(X),\,\mathbf L)$, it is enough to prove \eqref{eq:pullback}
  for these sections.

  To do this, we start from Fay's Trisecant Identity.  Set
  \begin{gather*}
    \begin{gathered}
      z\,:=\,(z_1,\, z_2), \\ a\,:=\,(a_1,\,a_2),
    \end{gathered}
    \qquad
    \begin{aligned}
      \one (z) &\,:=\, \theta ( w + \tphi (z) ),\\
      \due (z) &\,:=\, \theta [\zeta] (  \tphi (z) ),\\
      \three (z) &\,:=\, \theta ( w + \tphi (z) -\tphi (a) ).
    \end{aligned}
  \end{gather*}
  Note that since $\theta[\zeta]$ is odd, we have
  \begin{gather}
    \due(z_2,\,z_1) \,= \,- \due(z).  \label{poz:odd}
  \end{gather}
  Set
  \begin{equation}
    \label{eq:againtrisecant}
    \begin{split}
      A(z)&\,:=\, \one (z) \cdot \one (a_2, \,a_1) \cdot \due (z_1,\,
      a_1 )
      \cdot \due (z_2, \,a_2 ),\\
      B(z) &\,:=\, \theta(w) \cdot\three (z) \cdot \due(z_1, \,a_2 )
      \cdot \due(z_2, \,a_1) + \one (z_1,\, a_1) \cdot \one (a_2,\,
      z_2) \cdot \due(z ) \cdot \due(a ).
    \end{split}
  \end{equation}
Then \eqref{trisecant} becomes $A\,=\,B$.  The crucial step in the
proof is to compute the second derivatives $B_{12}$ and $A_{12}$
with respect to $z_1$ and $z_2$ at the point $(a_1,\, a_2)$.  This
computation is elementary but long and it is adapted from the proof
of Theorem 6 in \cite[p.~24, part D]{Gu3}.  After some work we
will recognize that the equation
\begin{gather}\label{eq:10}
    A_{12}dz_1 \wedge dz_2 \,=\, B_{12}dz_1 \wedge dz_2
  \end{gather}
  is exactly the same as \eqref{eq:pullback} for $f\,=\,s_w$.

  We start by computing $B_{12}(a)$.  Using \eqref{poz:odd} we get that
  \begin{equation}
    \label{eq:againtrisecant:2}
    \begin{split}
      A(z)&\,:= \,- \one (z) \cdot \one (a_2,\, a_1) \cdot \due
      (z_1,\, a_1 )
      \cdot \due (a_2,\, z_2 ),\\
      B(z)&\,:= \,- \theta(w) \cdot\three (z) \cdot \due(z_1,\, a_2 )
      \cdot \due( a_1,\,z_2) + \one (z_1,\, a_1) \cdot \one (a_2,\,
      z_2) \cdot \due(z ) \cdot \due(a ).
    \end{split}
  \end{equation}
  We first compute the derivative $B_{12}(a)$ in terms of the
  functions $\one ,\, \due$ and $\three$:
  \begin{gather*}
    \begin{aligned}
      B_1 (z) \,=\,&- \theta(w) \cdot\three_1 (z) \cdot \due(z_1,\,
      a_2 ) \cdot \due( a_1,\,z_2) +
      \\
      & - \theta(w) \cdot\three (z) \cdot \due_1(z_1,\, a_2 ) \cdot
      \due(a_1,\,z_2)
      +    \\
      & + \one_1 (z_1,\, a_1) \cdot \one (a_2,\, z_2) \cdot
      \due(z ) \cdot \due(a ) + \\
      &+ \one (z_1, \,a_1) \cdot \one (a_2,\, z_2) \cdot \due_1(z )
      \cdot \due(a ) ,
      \\
      B_{12} (a) \,=\, & - \theta(w) \cdot\three_{12} (a) \cdot \due(
      a ) \cdot \due( a) - \theta(w) \cdot\three_1 (a) \cdot \due( a )
      \cdot \due_2( a) +
      \\
      & - \theta(w) \cdot\three_2 (a) \cdot \due_1( a ) \cdot \due(a)
      - \theta(w) \cdot\three (a) \cdot \due_1( a ) \cdot \due_2( a)
      +    \\
      & + \one_1 (a_1,\, a_1) \cdot \one_2 (a_2,\, a_2) \cdot \due(a
      )^2 + \one_1 (a_1,\, a_1) \cdot \one (a_2, \,a_2) \cdot \due_2(a
      ) \cdot \due(a )
      + \\
      & + \one (a_1,\, a_1) \cdot \one_2 (a_2,\, a_2) \cdot \due_1(a )
      \cdot \due(a ) + \one (a_1,\, a_1) \cdot \one (a_2, \,a_2) \cdot
      \due_{12}(a )
      \cdot \due(a )\\
      =\,&\,\, \theta(w)^2 \bigg ( \due_{12}(a ) \cdot \due(a ) -
      \due_1( a )
      \cdot \due_2( a) \bigg ) +\\
      &+ C \cdot \theta(w) \cdot \due(a) + + \one_1 (a_1,\, a_1) \cdot
      \one_2 (a_2,\, a_2) \cdot \due(a )^2,
    \end{aligned}
  \end{gather*}
  where
$$ C \,=\, - \three_{12} (a) \cdot \due(a ) - \three_1
(a) \cdot \due_2( a) -\three_2 (a) \cdot \due_1(a ) + \one_1 (a_1,\,
a_1) \cdot \due_2(a ) + \one_2 (a_2, \,a_2) \cdot \due_1(a ).$$ Now
assume that on $U$,
\begin{gather*}
  \omega_i \,\,=\,\, h_i(z) dz.
\end{gather*}
Recall from \eqref{phitilde} the definition of $\widetilde \phi$.
We compute some values and derivatives:
\begin{gather*}
  \begin{aligned}
    &      \one_1(a)  \,= \,\theta_k (w + \tphi(a)) h_k(a_1) , \\
    &      \one_2(a)  \,=\,  - \theta_k (w + \tphi(a)) h_k(a_2) , \\
  \end{aligned}
  \qquad
  \begin{aligned}
    &      \one(a_1,\,a_1)  \,=\, \one(a_2,\,a_2) \,=\, \theta(w),\\
    & \three(a) \,=\, \theta(w).
  \end{aligned}
\end{gather*}
Here repeated indices are summed. We compute some more derivatives:
\begin{gather}
  \begin{aligned}
    &      \one_1 (x,\,x ) \,=\, \theta_i(w) h_i(x) , \\
    &      \one_2 (x,\,x ) \,=\, -\theta_i(w) h_i(x),       \\
    &      \one_1 (a_1,\,a_1 ) \,=\, \theta_i(w) h_i(a_1) , \\
    &      \one_2       (a_2,\,a_2 ) \,=\, -\theta_i(w) h_i(a_2),\\
  \end{aligned}
  \qquad
  \begin{aligned}
    &      \one_1 (a_1,\,a_1 ) \,=\, \three_1(a), \\
    &      \one_2 (a_2,\,a_2 ) \,=\, \three_2(a),\\
    &    \three_1(a) \,=\, \theta_i (w) h_i(a_1), \\
    &    \three_2(a) \,=\, -\theta_j (w) h_j(a_2).\\
  \end{aligned}
  \label{derivatejeff}
\end{gather}
Finally some terms cancel out:
\begin{gather*}
  C\,=\, - \three_{12} (a) \cdot \due(a ) - \three_1 (a) \cdot \due_2(
  a) -\three_2 (a) \cdot \due_1(a ) + \three_1(a) \cdot \due_2(a ) +
  \three_2 (a)
  \cdot \due_1(a )\\
  =\,- \three_{12} (a) \cdot \due(a ).
\end{gather*}
So again using \eqref{derivatejeff} we obtain that
\begin{gather*} B_{12}(a) \,=\, \theta(w)^2 \bigg ( \due_{12}(a )
  \cdot \due(a ) - \due_1( a
  ) \cdot \due_2( a) \bigg ) \\
  - \theta(w) \cdot \due(a) \cdot \three_{12} (a) \cdot \due(a )
  + \one_1 (a_1,\, a_1) \cdot \one_2 (a_2, \,a_2) \cdot \due(a )^2 \\
  =\, \theta(w)^2 \bigg ( \due_{12}(a ) \cdot \due(a ) - \due_1( a )
  \cdot \due_2( a) \bigg ) +\due(a)^2 \cdot \bigg( \three_1 ( a)
  \three_2(a) -\theta(w) \cdot \three_{12} (a) \bigg ).
\end{gather*}
Recall that
\begin{gather*}
  (\log \due) _{12}(a) \,\,=\,\, \frac{ \due_{12}(a ) \cdot \due(a) -
    \due_1( a ) \cdot \due_2( a) }{\due(a)^2},\\
  \due(a) \,=\, \theta[\zeta] (\tphi(a)), \qquad \due(a)^2 \,=\,
  \theta[\zeta]^2 (\tphi(a)).
\end{gather*}
Set
\begin{gather*}
  D\,\,:=\,\, \three_1(a) \three_2(a) - \theta(w) \cdot \three_{12}
  (a).
\end{gather*}
Then using \eqref{canonical:log} we get that
\begin{gather*}
  B_{12}(a) \,=\, \due(a)^2\cdot \Big [ \theta (w)^2 \cdot (\log \due)
  _{12}(a) + D
  \Big ]\\
  =\,\, \theta[\zeta]^2 (\tphi(a)) \cdot \Big [ s_w(0) \cdot (\log
  \due) _{12}(a) + D \Big ]
  ,\\
  B_{12}(a) dz_1\wedge dz_2 \,\,=\,\, \theta[\zeta]^2 (\tphi(a)) \cdot
  \Big [ s_w(0) \cdot \Omega + D \, dz_1\wedge dz_2 \Big ].
\end{gather*}
We compute some more derivatives:
\begin{align*}
  &  \notag  \three_1 (z) \,=\, \theta_i (w +\tphi(z) - \tphi(a)) h_i(z_1), \\
  &   
    \three_{12} (a) \,=\, -\theta_{ij} (w ) h_j(a_2)h_i(a_1) , \\
  &    \three_1 (a) \,=\, \theta_i (w ) h_i(a_1), \\
  &  \three_2 (a) \,=\,-  \theta_i (w ) h_i(a_2).
\end{align*}
Recalling that $ f\,=\,s_w$,
\begin{align*}
  &    f(z) \,=\, \theta(z+w) \theta (z-w)\\
  &    f_{ij}(0) \,=\, 2 \theta (w) \theta_{ij}(w) - 2 \theta_i(w) \theta_j(w),\\
  &  D \,=\,
    -\theta_i(w) h_i(a_1) \theta(w) h_j(a_2)
    + \theta(w)\theta_{ij} (w ) h_j(a_2)h_i(a_1)= \\
  &    =\, \left ( \theta(w)\theta_{ij} (w ) -\theta_i(w) \theta(w) \right
    ) h_i(a_1) h_j(a_2) = \\
  &=\,\frac{1}{2} f_{ij} (0) h_i(a_1) h_j(a_2),
  \\
  & h_i(a_1)dz_1 \,=\, p_1^*\omega_i (a_1), \qquad h_j(a_2)dz_2 \,=\,
    p_2^*\omega_j (a_2) .
\end{align*}
Finally we get that
\begin{gather}
  \label{B12:final}
  B_{12}(a) dz_1\wedge dz_2 \,\,=\,\, \theta[\zeta]^2 (\tphi(a)) \cdot
  \Big [ f(0) \cdot \Omega + \frac{1}{2} f_{ij} (0) p_1^*\omega_i
  \wedge p_2^* \omega_j (a) \Big ].
\end{gather}
  
Next we perform the computation of $A_{12}(a)$ (it is much simpler).
Recall \eqref{eq:againtrisecant:2}:
\begin{gather*}
  A(z)\,:=\, - \one (z) \cdot \one (a_2, \,a_1) \cdot \due (z_1,\, a_1
  ) \cdot \due (a_2,\, z_2 ).
\end{gather*}
Since $\theta[\zeta]$ is odd,
\begin{gather*}
  \due(a_i,\,a_i) \,\,=\,\, \theta[\zeta](0) \,\,=\,\, 0.
\end{gather*}
So among the terms obtained in the process of computing $A_{12}(a)$
only those in which the last two factors are both differentiated can
be nonzero. This means that
\begin{gather*}
  A_{12}(a) \,\,=\,\, -\one(a) \one(a_2,\,a_1) \due _1(a_1,\,a_1)
  \due_2(a_2,\,a_2).
\end{gather*}
By \eqref{eq:omega:zeta},
\begin{align*}
  &    \due_1(a_1,\,a_1)  dz_1 \,=\, \theta[\zeta]_i(0) h_i ( a_1) dz_1 \,=\, p_1^*\omega_\zeta(a), \\
  &    \due_2(a_2,\,a_2) dz_2\,=\, -\theta[\zeta]_i(0) h_i ( a_2) dz_2 \,=\, -
    p_2^*\omega_\zeta (a).
\end{align*}
Moreover,
\begin{gather}
  \notag \begin{aligned}
    & \one(a)\,=\, \theta(w + \tphi(a)) , \\
    & \one(a_2,\,a_1) \,=\, \theta( w - \tphi(a)),
  \end{aligned}\qquad
  \one(a)\cdot \one(a_2,\,a_1) \,=\, s_w ( \tphi(a)),\\
  \label{A12:final}
  A_{12}(a)dz_1 \wedge dz_2\, =\, s_w(\tphi(a)) \cdot
  p_1^*\omega_\zeta \wedge p_2^*\omega_\zeta.
\end{gather}
Plugging \eqref{A12:final} and \eqref{B12:final} into \eqref {eq:10}
completes the proof.
\end{proof}

\begin{proposition}
  \label{prop:operativepullback} There is a unique isomorphism
  \begin{gather*}
    \isom\,\,: \,\,\phi^* \mathbf L \,\,\stackrel{\cong}{\lra}\,\,
    K_S(2\Delta)
  \end{gather*}
  such that
  \begin{gather*}
    \isom \left ( \phi^* (s_\zeta ) \right ) \,\,=\,\,
    \frac{1}{c(\zeta)}(p_1^*\omega_\zeta) \wedge (p_2^* \omega_\zeta)
  \end{gather*}
  for all $\zeta\,\in\,\Theta_{reg}$, where
  $c(\zeta)\,\in\,\mathbb{C}^\ast$ is the constant defined in Lemma
  \ref{allthetalemma}.

  Furthermore, for any $f\,\in\, H^0(J^0(X),\,\mathbf{L})$,
  \begin{equation}
    \label{eq:operativepullback}
    \Psi(\phi^\ast f)\,\,=\,\,f(0)\Omega+\frac{1}{2}\sum_{i,j=1}^g
f_{ij}
    (0) \, p_1^\ast
    \omega_i\wedge p_2^\ast \omega_j.
  \end{equation}
\end{proposition}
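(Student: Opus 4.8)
The plan is to first establish the existence and uniqueness of the isomorphism $\isom$, and then verify the formula \eqref{eq:operativepullback}. For the existence, I would take the isomorphism of line bundles guaranteed by Proposition \ref{prop1}; this isomorphism is determined only up to a nonzero scalar, so I would fix the scalar by imposing the normalization $\isom(\phi^*(s_\zeta)) = \frac{1}{c(\zeta)}(p_1^*\omega_\zeta)\wedge(p_2^*\omega_\zeta)$ for one chosen regular odd characteristic $\zeta$. That this is a legitimate normalization requires knowing that $\phi^*(s_\zeta)$ is a \emph{nonzero} section of $\phi^*\mathbf L$ and that $(p_1^*\omega_\zeta)\wedge(p_2^*\omega_\zeta)$ is a nonzero section of $K_S(2\Delta)$: the former follows from Proposition \ref{prop:omega:zeta} together with the fact that $\omega_\zeta \not\equiv 0$ (because $\zeta$ is \emph{regular}), and indeed Proposition \ref{prop:omega:zeta}(b) tells us precisely that these two sections have the same divisor, so one is a nonzero scalar multiple of the other under any fixed identification $\phi^*\mathbf L \cong K_S(2\Delta)$. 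Hence there is a unique rescaling making them equal.

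Next I would check that this single normalization forces the stated identity for \emph{all} $\zeta \in \Theta_{\mathrm{reg}}$, and more generally the formula \eqref{eq:operativepullback} for all $f$. The key input is Proposition \ref{prop:pullback}: for a regular odd characteristic $\zeta'$ (note the reference characteristic in that proposition is itself a regular odd characteristic, call it $\zeta_0$), equation \eqref{eq:pullback} reads
\begin{gather*}
  (f\circ\tphi)\cdot p_1^\ast\omega_{\zeta_0}\wedge p_2^\ast\omega_{\zeta_0}
  \,\,=\,\, \theta[\zeta_0]^2\circ\tphi\,\cdot\,\Big(f(0)\,\Omega + \tfrac12\sum_{i,j}f_{ij}(0)\,p_1^\ast\omega_i\wedge p_2^\ast\omega_j\Big).
\end{gather*}
Using \eqref{szeta}, namely $\theta[\zeta_0]^2 = c(\zeta_0)\,s_{\zeta_0}$, and the fact that $f\circ\tphi$ represents $\phi^*f$ while $s_{\zeta_0}\circ\tphi$ represents $\phi^*s_{\zeta_0}$, this says exactly that under the identification $\isom$ normalized so that $\isom(\phi^*s_{\zeta_0}) = \frac{1}{c(\zeta_0)}(p_1^*\omega_{\zeta_0})\wedge(p_2^*\omega_{\zeta_0})$, one has $\isom(\phi^*f) = f(0)\,\Omega + \frac12\sum_{i,j}f_{ij}(0)\,p_1^\ast\omega_i\wedge p_2^\ast\omega_j$ for every $f$. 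In particular, taking $f = s_\zeta$ for an arbitrary $\zeta\in\Theta_{\mathrm{reg}}$ (not necessarily odd or half-integer), one computes $s_\zeta(0) = \theta(\zeta)^2 = 0$ by Proposition \ref{prop:omega:zeta}(a), and $\frac{\partial^2 s_\zeta}{\partial z_i\partial z_j}(0) = \frac{2}{c(\zeta)}\frac{\partial\theta[\zeta]}{\partial z_i}(0)\frac{\partial\theta[\zeta]}{\partial z_j}(0)$ by Lemma \ref{allthetalemma}, so that $\frac12\sum_{i,j}(s_\zeta)_{ij}(0)\,p_1^\ast\omega_i\wedge p_2^\ast\omega_j = \frac{1}{c(\zeta)}(p_1^*\omega_\zeta)\wedge(p_2^*\omega_\zeta)$ by the definition \eqref{eq:omega:zeta} of $\omega_\zeta$. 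This gives precisely the stated formula $\isom(\phi^*s_\zeta) = \frac{1}{c(\zeta)}(p_1^*\omega_\zeta)\wedge(p_2^*\omega_\zeta)$, confirming that the normalization propagates to all regular $\zeta$.

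For uniqueness of $\isom$: any two isomorphisms $\phi^*\mathbf L \xrightarrow{\cong} K_S(2\Delta)$ differ by multiplication by a nonzero constant (since both are line bundles on the connected variety $S$), and the normalization condition on a single nonzero section $\phi^*(s_{\zeta_0})$ pins down that constant. Finally, to conclude \eqref{eq:operativepullback} for general $f$ I would invoke that the sections $s_w$, $w\in\mathbb C^g$, span $H^0(J^0(X),\mathbf L)$ (as recalled after \eqref{def:sw}), so that both sides of \eqref{eq:operativepullback} are $\mathbb C$-linear in $f$ and agree on a spanning set — here I must be slightly careful that $s_w$ for arbitrary $w$ need not satisfy $s_w(0)=0$, but Proposition \ref{prop:pullback} was proved for exactly these $s_w$, so no issue arises. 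I expect the main (and essentially the only real) obstacle to be bookkeeping: checking that Lemma \ref{allthetalemma}'s constant $c(\zeta)$ for general $\zeta\in\Theta$ is compatible with the constant $c(\zeta_0) = e^{2\pi\sqrt{-1}\langle a,\tau a\rangle}$ from \eqref{szeta} for odd $\zeta_0$ (they are defined by the same formula up to the sign conventions, and the sign is absorbed consistently), and making sure the scalar used to normalize $\isom$ does not secretly depend on the choice of $\zeta_0$ — which is guaranteed precisely because the propagation computation above, run with two different reference characteristics, yields the same $\isom$.
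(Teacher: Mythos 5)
Your proposal is correct and follows essentially the same route as the paper: fix the isomorphism from Proposition \ref{prop1} up to scalar, normalize it on one regular odd characteristic $\zeta_0$, deduce the general formula \eqref{eq:operativepullback} from Proposition \ref{prop:pullback} via $\theta[\zeta_0]^2=c(\zeta_0)s_{\zeta_0}$, and then verify the normalization at arbitrary $\zeta\in\Theta_{\rm reg}$ by combining $s_\zeta(0)=0$ with Lemma \ref{allthetalemma} and the definition of $\omega_\zeta$. The paper merely packages this differently (it introduces a family of candidate isomorphisms $\Psi_\zeta$ and shows they all coincide with $\Psi_{\zeta_0}$), and your flagged consistency check between the constants of Lemma \ref{allthetalemma} and \eqref{szeta} for odd characteristics is the same one the paper uses implicitly.
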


\begin{proof}
By proposition \ref{prop:omega:zeta}, both $ \phi^* \mathbf L$ and
$K_S(2\Delta)$ are line bundles associated with the same divisor. So
they are isomorphic.  Let
\begin{gather*}
\psi\,\,:\,\, \phi^* \mathbf L
\,\,\stackrel{\cong}{\longrightarrow}\,\, K_S(2\Delta).
\end{gather*}
be a fixed isomorphism.  It follows from Proposition
\ref{prop:omega:zeta}(b) that $\psi (\phi^*_\zeta)$ and
$p_1^* \omega_\zeta\wedge p_2^* \omega_\zeta$ are sections of
$K_S(2\Delta)$ with the same divisor. So there is
a $\lambda_\zeta \,\in\, \mathbb C^*$ such that
\begin{gather*}
\psi (\phi^*s_\zeta)\,\, =\,\, \lambda_\zeta \cdot p_1^*
\omega_\zeta\wedge p_2^* \omega_\zeta.
\end{gather*}
Let $\isom_\zeta\,:=\,\frac{1}{\lambda_\zeta c(\zeta)} \cdot \psi$
for each $\zeta\,\in\,\Theta_{reg}$. By construction,
$$
\isom_\zeta \left ( \phi^* (s_\zeta ) \right ) \,\,=\,\,
\frac{1}{c(\zeta)}(p_1^*\omega_\zeta) \wedge (p_2^* \omega_\zeta).
$$
Fix an odd regular characteristic $\zeta_0$. Using \eqref{szeta} we
have
\begin{gather*}
    \isom_{\zeta_0}(\phi^\ast
    \theta[\zeta_0]^2)\,\,=\,\,\isom_{\zeta_0}(
    c(\zeta_0) \phi^\ast
s_{\zeta_0})\,\,=\,\,p_1^\ast
    \omega_{\zeta_0}\wedge p_2^\ast \omega_{\zeta_0}.
  \end{gather*}
  By Proposition \ref {prop:pullback}, for any
  $f\,\in\, H^0(\mathbf{L})$ we have
  \begin{gather*}
    \isom_{\zeta_0}(\phi^\ast f)\,\,=\,\,\frac{f\circ
      \tphi}{\theta[\zeta_0]^2\circ \tphi}\isom_{\zeta_0}(\phi^\ast
    \theta[\zeta_0]^2)\,\,=\,\,\frac{f\circ
      \tphi}{\theta[\zeta_0]^2\circ \tphi}p_1^\ast
    \omega_{\zeta_0}\wedge p_2^\ast
    \omega_{\zeta_0}\\
    =\, f(0)\Omega+\frac{1}{2}\sum_{i,j=1}^g
    f_{ij} (0)\, p_1^\ast \omega_i\wedge p_2^\ast
    \omega_j.
  \end{gather*}
  To prove the proposition it suffices to show that
  $\Psi_\zeta\,=\,\Psi_{\zeta_0}$ for every
  $\zeta \,\in\, \Theta_{\rm reg}$.  Since all
  $\{\isom_\zeta\}_{\zeta\in\Theta_{reg}}$ are mutually proportional,
  in order to prove that $\Psi_\zeta\,=\,\Psi_{\zeta_0}$ for all
  $\zeta \,\in\, \Theta_{\rm reg}$ it is enough to show that
  $\Psi_{\zeta_0}(\phi^\ast s_\zeta)\,=\,\Psi_\zeta(\phi^\ast
  s_\zeta)$.

This is rather straightforward: Indeed,
$$
\isom_{\zeta_0}(\phi^\ast
s_\zeta)\,=\,s_\zeta(0)\Omega+\frac{1}{2}\sum_{i,j=1}^g
(s_\zeta)_{ij}(0)\, p_1^\ast
\omega_i\wedge p_2^\ast \omega_j.
$$
Using Proposition \ref{prop:omega:zeta}(a) and Lemma
\ref{allthetalemma}, this is equal to
$$\frac{1}{c(\zeta)}\sum_{i,j=1}^g \frac{\partial
\theta[\zeta]}{\partial z_i}(0)\frac{\partial
\theta[\zeta]}{\partial z_j}(0)p_1^\ast \omega_i\wedge
p_2^\ast \omega_j\,=\,\frac{1}{c(\zeta)}p_1^\ast\omega_\zeta\wedge
p_2^\ast \omega_\zeta=\Psi_\zeta(\phi^\ast s_\zeta).
$$
This completes the proof.
\end{proof}

\begin{remark}
  The right hand side of \eqref{eq:operativepullback} vanishes
  precisely when $f\,\in\,\mathbb{V}_{00}$. Note that this is a proof
  of \eqref{eq:2}.
\end{remark}

We give one last expression for the pullback map $\phi^\ast$.\\
In our notation, the aforementioned Theorem 6 of \cite[part D]{Gu3}
says the following:
\begin{equation}\label{eq:trisecant}
\frac{f\circ \phi}{E^2}p_1^\ast dz\wedge p_2^\ast dz
\ =\ f(0)\Omega +\frac{1}{2} \sum_{i,j}
  f_{ij}(0) \, p^\ast \omega_i\wedge q^\ast \omega_j,
\end{equation}
where $z\,:\,\widetilde{X}\,\longrightarrow\, \mathbb{C}$ is the
coordinate given by the uniformization theorem, and
$E\,:\,\widetilde{X}\times \widetilde{X}\,\longrightarrow\,
\mathbb{C}$ is the classical Prime Function (see \cite[part A]{Gu3}).

The left-hand side of \eqref{eq:trisecant} is a new expression for
$\phi^\ast f$, and makes the intrinsic nature of the pullback more
evident. In fact the Prime Function $E$ is intrinsic.

\section{Comparison of $\sigma_X$ and $\eta_X$}
\label {section:6}

We now proceed to compare the section $\sigma_X$ constructed in
Proposition \ref{prop2} with another section denoted $\eta_X$ that was
constructed in \cite{EPG}, building on work in \cite{cpt}. This section
$\eta_X$ is constructed using basic Hodge theory, and it describes the
second fundamental form of the Torelli map with respect to the Siegel
metric. More precisely, the Torelli map
$j\,:\,M_g\,\longrightarrow\, A_g$ is an embedding outside the
hyperelliptic locus. When $g\, \geq\, 3$, the second fundamental form of
$j(M_g)\, \subset\, A_g$ with respect to the Siegel metric on $A_g$ at a
non-hyperelliptic point $[X]\,\in\, M_g$ coincides is a map
\begin{equation}
  I_2(K_X)\,\, \longrightarrow\,\, \text{Sym}^2(H^0(X,\, 2K_{X})).
\end{equation}
where
$I_2(K_X)\,\subset \, H^0(X,\, K_X)^{\otimes 2}$ is the kernel of the natural map
$$
m\, :\, \text{Sym}^2(H^0(X,\, K_X)) \,\longrightarrow\, H^0(X,\, 2K_X).
$$
One can identify $I_2(K_X)$ with a subspace of $H^0(S, K_S(-2\Delta))$ and $\text{Sym}^2(H^0(X,\, 2K_{X}))$ with  a subspace of $H^0(S, 2K_S)$. With these identification the second fundamental form coincides with the multiplication map
$$
 Q \,\longmapsto\,
 Q\cdot \eta_X;
 $$
see \cite{EPG}.

The form $\eta_X$ has the following defining property: Fix any holomorphic
coordinate chart $(U,\,z)$ on $X$. Since any holomorphic $1$-form on a
Riemann surface is closed, there is a natural inclusion map
$$j_z\,:\, H^0(X,\,K_X(2z(0)))\,\hookrightarrow\,
H^1(X-\{z(0)\},\, \mathbb{C})\,\cong\,H^1(X,\,\mathbb{C}).$$ We have
$h^0(X,\,K_X(2z(0)))\,=\,g+1$ by Riemann-Roch and hence
$\dim j_z^{-1}(H^{0,1}(X,\mathbb{C}))\,=\,1$. Then there is a unique
$\eta_z\,\in\, H^0(X,\,K_X(2z(0)))$ such that
\begin{equation*}
  \langle \eta_z\rangle_{\mathbb{C}}\,\,=\,\,j_z^{-1}(H^{0,1}(X,\,\mathbb{C}))\quad\,\, \text{ and }
  \quad \eta_z\,=\,\left(\frac{1}{z^2}+h(z)\right)dz
\end{equation*}
on $U$ with $h$ holomorphic. This construction defines a collection of
meromorphic forms $\eta_z$ for all possible coordinate charts
$(U,\,z)$. Then one has
\begin{equation}
  \label{eq:bigetaproperty}
  \eta_X|_{X\times \{z(0)\}}\,\,=\,\,\eta_z
\end{equation}
for all charts $(U,\,z)$; here $\eta_X|_{X\times \{z(0)\}}$ is the
contraction by $\frac{\partial }{\partial z}$.

\begin{remark}
  The form $\eta_X$ can be characterized as follows: It is the only
  element of $H^0(S,\, K_{S}(2\Delta))$ with cohomology class of pure
  type $(1,\,1)$ in $H^2(S- \Delta,\,\mathbb{C})$ and with biresidue
  $1$ on the diagonal.  See \cite[Theorem 5.4]{bcfp}. See also
  \cite{Lo}.
\end{remark}

\begin{remark}
  The form $\eta_X$ appears in the unpublished book of Gunning
  \cite{Gu2}. In \cite{Gu2}, the form $\eta_X$ is referred to as the
  \emph{Intrinsic Bidifferential}, and it is constructed by imposing
  \eqref{eq:bigetaproperty}. We briefly remark that $\eta_X$ is indeed
  intrinsic (or canonical), whereas the Canonical Bidifferential
  $\Omega_X$ is not, as its construction depends on the choice of a
  marking on $X$.
\end{remark}

\begin{proposition}
  Fix a marking $\{a_i,\,b_i\}_{i=1}^g$ on $X$, and let
  $\omega_i\,\in\, H^0(X,\, K_X)$ be normalized as in \eqref{nf}. Let
  $\tau_{ij}=\int_{b_i}\omega_j$ be the period matrix of $X$ with
  respect to this marking. Let $\Omega_X$ be the Canonical
  Bidifferential of Proposition \ref{canonical:bidifferential} with
  respect to this marking. Then
  \begin{equation}
    \label{eq:etaexplicit}
    \eta_X\,\,\,=\,\,\,\Omega_X-\pi\sum_{i,j}
(\Im \tau)^{ij} p^\ast \omega_i\wedge q^\ast \omega_j,
  \end{equation}
  where $\Im \tau$ is the imaginary part of $\tau$ and
  $(\Im \tau)^{ik}\Im \tau_{kj} = \delta_{ij}$.
\end{proposition}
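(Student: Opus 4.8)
The plan is to pin down $\eta_X$ via its defining property \eqref{eq:bigetaproperty}: for every $y\in X$ the restriction $\eta_X|_{X\times\{y\}}$ is a meromorphic $1$-form on $X$ with a double pole of biresidue $1$ at $y$, no residue, and whose de Rham class in $H^1(X,\,\mathbb C)$ is of type $(0,1)$. The canonical bidifferential $\Omega_X$ of Proposition \ref{canonical:bidifferential} has exactly the same polar part along $\Delta$, and both $\eta_X$ and $\Omega_X$ lie in $H^0(S,\,K_S(2\Delta))^-$. Hence $\eta_X-\Omega_X$ has biresidue $0$ on $\Delta$, and it carries no simple pole along $\Delta$ either: in a local coordinate chart a section $\nu=\nu_0(z_1,z_2)\,dz_1\wedge dz_2$ of $K_S(2\Delta)$ with at most a simple pole has $\nu_0=g/(z_1-z_2)$ with $g$ holomorphic, and $f_X$-antiinvariance forces $\nu_0$ to be symmetric, hence $g(z_1,z_2)=-g(z_2,z_1)$, so $g$ vanishes on $\Delta$ and $\nu$ is holomorphic. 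Therefore
$$\eta_X-\Omega_X\ \in\ H^0(S,\,K_S)^-\ \cong\ S^2H^0(X,\,K_X),$$
and there is a unique symmetric matrix $(\gamma_{ij})_{i,j=1}^g$ with $\eta_X=\Omega_X+\sum_{i,j}\gamma_{ij}\,p^\ast\omega_i\wedge q^\ast\omega_j$. It remains to show $\gamma_{ij}=-\pi(\Im\tau)^{ij}$.

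First I would record the $a$- and $b$-periods of the restriction of $\Omega_X$ to a fibre. Using the symmetry $\Omega_X(p,\,q)=\Omega_X(q,\,p)$ together with the period conditions of Proposition \ref{canonical:bidifferential}, one gets, as $1$-forms in the first variable,
$$\int_{a_k}\Omega_X(\cdot,\,y)\ =\ 0,\qquad \int_{b_k}\Omega_X(\cdot,\,y)\ =\ 2\pi\sqrt{-1}\,\omega_k(y).$$
Combining this with $\int_{a_l}\omega_i=\delta_{li}$ and $\int_{b_l}\omega_i=\tau_{il}$, and writing $\eta_X|_{X\times\{y\}}=\Omega_X(\cdot,\,y)+\sum_i\bigl(\sum_j\gamma_{ij}\omega_j(y)\bigr)\omega_i$, one obtains
$$\int_{a_l}\eta_X|_{X\times\{y\}}=\sum_j\gamma_{lj}\,\omega_j(y),\qquad \int_{b_l}\eta_X|_{X\times\{y\}}=2\pi\sqrt{-1}\,\omega_l(y)+\sum_{i,j}\gamma_{ij}\,\tau_{il}\,\omega_j(y).$$

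Next I would turn the condition ``the class of $\eta_X|_{X\times\{y\}}$ is of type $(0,1)$'' into a period relation. A meromorphic $1$-form $\mu$ of the second kind has class of type $(0,1)$ exactly when $[\mu]=\sum_k\beta_k[\overline{\omega_k}]$; using $\int_{a_l}\overline{\omega_k}=\delta_{lk}$ and $\int_{b_l}\overline{\omega_k}=\overline{\tau_{kl}}$, this is equivalent to $\int_{b_l}\mu=\sum_k\overline{\tau_{kl}}\,\int_{a_k}\mu$ for all $l$. Imposing this on $\mu=\eta_X|_{X\times\{y\}}$ and comparing coefficients of the linearly independent forms $\omega_j(y)$ gives
$$2\pi\sqrt{-1}\,\delta_{lj}+\sum_i\gamma_{ij}\,\tau_{il}\ =\ \sum_i\overline{\tau_{il}}\,\gamma_{ij}\qquad(1\le l,j\le g),$$
that is $\sum_i\gamma_{ij}(\tau_{il}-\overline{\tau_{il}})=-2\pi\sqrt{-1}\,\delta_{lj}$, hence $\sum_i(\Im\tau)_{li}\,\gamma_{ij}=-\pi\,\delta_{lj}$ after using $\tau_{il}-\overline{\tau_{il}}=2\sqrt{-1}\,(\Im\tau)_{il}$ and the symmetry of $\Im\tau$. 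Thus $(\Im\tau)\cdot(\gamma_{ij})=-\pi\,\mathrm{Id}$, so $\gamma_{ij}=-\pi(\Im\tau)^{ij}$, which is \eqref{eq:etaexplicit}. (Conversely, the right-hand side of \eqref{eq:etaexplicit} visibly has biresidue $1$ on $\Delta$ and, reversing the computation, restricts to a class of type $(0,1)$ on every fibre, so by the characterization of $\eta_X$ it equals $\eta_X$; thus the forward argument is enough.)

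The one genuinely delicate point is the one flagged above — that $\eta_X-\Omega_X$ is holomorphic, where the antiinvariance of both bidifferentials is what rules out a first order pole along the diagonal — together with keeping the conventions for the period matrix $\tau$ and for the inverse matrix $(\Im\tau)^{ij}$ consistent, since a sign there propagates directly into the constant $-\pi$. Everything else is bookkeeping with period integrals and linear algebra.
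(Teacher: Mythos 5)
Your argument is correct, and it is worth noting that the paper itself does not prove this proposition at all: its ``proof'' is a one-line citation to Theorem 4.23 of Gunning's book, where the identity arises because $\eta_X$ (Gunning's $\mu_X$) is used to \emph{construct} $\Omega_X$ (Gunning's $\widehat{\mu}_X$) by correcting its periods. What you have done is run that construction in reverse and made it self-contained: you first show $\eta_X-\Omega_X\in H^0(S,\,K_S)^-\cong S^2H^0(X,\,K_X)$ (your observation that antiinvariance kills any simple pole along $\Delta$ is exactly the right way to see this, and both antiinvariance statements are available in the paper --- for $\Omega_X$ in Proposition \ref{canonical:bidifferential} and for $\eta_X$ in the remark inside the proof of the lemma on the image of $\phi^*$), and then you determine the symmetric matrix $(\gamma_{ij})$ by translating the defining property ``$[\eta_X|_{X\times\{y\}}]\in H^{0,1}$'' into the period relation $\int_{b_l}\mu=\sum_k\overline{\tau_{kl}}\int_{a_k}\mu$ and comparing coefficients of the linearly independent $\omega_j(y)$. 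I checked the linear algebra: $\sum_i\gamma_{ij}(\tau_{il}-\overline{\tau_{il}})=-2\pi\sqrt{-1}\,\delta_{lj}$ does give $(\Im\tau)\cdot(\gamma_{ij})=-\pi\,\mathrm{Id}$, and the sign is consistent with the paper's own genus-one specialization $\eta_\tau=\Omega_\tau-\tfrac{\pi}{\Im\tau}\,p^*\omega_1\wedge q^*\omega_1$ used in Theorem \ref{final}. The one step you should make explicit rather than assert is the transfer of the period conditions of Proposition \ref{canonical:bidifferential} from the second variable to the first: this uses precisely the symmetry of $\Omega_X$ as a bidifferential (equivalently its $f_X$-antiinvariance as a section of $K_S(2\Delta)$), which you do invoke. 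With that, your proof is complete and arguably more useful to a reader than the citation the paper offers.
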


\begin{proof}
  The above equality can be found in the proof of Theorem 4.23 of
  \cite{Gu2}.  In that proof, $\eta_X$ (which is called $\mu_X$ there)
  is used in constructing $\Omega_X$ (which is called
  $\widehat{\mu}_X$ there).
\end{proof}

All that is needed now is to find explicitly a generator of the line
$\mathbb{S}$ in \eqref{e8}, for then equation \eqref{eq:pullback} will give an
explicit expression for $\sigma_X$ that can be easily compared with
\eqref{eq:etaexplicit}.

This will be achieved through the following two lemmas.

\begin{lemma}\label{linearalgebra}
  Let $\{f_u\}$ be a unitary base of $\mathbb{V}$ in \eqref{e7}. Then
  $\mathbb{S}=V_0^\perp$ is generated by the section
$$s\,\,:=\,\,\sum_{u} \overline{f_u(0)}f_u.$$
\end{lemma}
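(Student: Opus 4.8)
The claim is that if $\{f_u\}$ is a unitary basis of $\mathbb{V}$, then the section $s := \sum_u \overline{f_u(0)} f_u$ generates $\mathbb{S} = V_0^\perp$. The plan is to exhibit $s$ as (a multiple of) the image of the evaluation-at-$0$ functional under the canonical antilinear identification $\mathbb{V}^* \cong \overline{\mathbb{V}}$ supplied by the Hermitian inner product, and then observe that this image is exactly the orthogonal complement of the kernel of that functional.

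First I would recall that $V_0 \subset \mathbb{V}$ is, by definition \eqref{v0}, the kernel of the linear evaluation map $\mathrm{ev}_0 : \mathbb{V} \to \mathbf{L}_0$, $t \mapsto t(0)$, where $\mathbf{L}_0$ is the fiber of $\mathbf{L}$ over $0 \in J^0(X)$; since the linear system $|\mathbf{L}|$ is base-point free, $\mathrm{ev}_0$ is surjective, so $V_0$ has codimension one and $\mathbb{S} = V_0^\perp$ is one-dimensional. Thus it suffices to produce one nonzero element of $\mathbb{S}$, i.e. one nonzero vector orthogonal to every section vanishing at $0$.

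The key computation is that for every $t \in \mathbb{V}$,
\begin{equation*}
\langle t,\, s\rangle \,=\, \Big\langle t,\, \sum_u \overline{f_u(0)} f_u\Big\rangle \,=\, \sum_u f_u(0)\,\langle t,\, f_u\rangle.
\end{equation*}
Here I am using conjugate-linearity in the second slot, which turns $\overline{f_u(0)}$ into $f_u(0)$. Since $\{f_u\}$ is a unitary (orthonormal) basis, $t = \sum_u \langle t, f_u\rangle f_u$, and evaluating at $0$ gives $t(0) = \sum_u \langle t, f_u\rangle f_u(0)$ inside the one-dimensional space $\mathbf{L}_0$. Comparing, $\langle t, s\rangle$ equals $t(0)$ under the chosen identification $\mathbf{L}_0 \cong \mathbb{C}$ (or, coordinate-free, $\langle t, s\rangle$ is the scalar by which $t(0)$ is expressed relative to the basis vector determined by the $f_u(0)$'s). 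In particular $\langle t, s\rangle = 0$ whenever $t(0) = 0$, i.e. whenever $t \in V_0$; hence $s \in V_0^\perp = \mathbb{S}$. Finally $s \neq 0$: choosing $t$ with $t(0) \neq 0$ (possible by base-point-freeness) gives $\langle t, s\rangle \neq 0$. Since $\mathbb{S}$ is one-dimensional, $s$ generates it.

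The only genuine subtlety — and the step I would be most careful about — is bookkeeping with the target line $\mathbf{L}_0$: the quantities $f_u(0)$ live in $\mathbf{L}_0$, not in $\mathbb{C}$, so "$\overline{f_u(0)}$" and the products $f_u(0)\langle t, f_u\rangle$ implicitly use a trivialization of $\mathbf{L}_0$ together with the Hermitian metric $h$ on $\mathbf{L}_0$ at the point $0$. One must check that the definition of $s$ is independent of this trivialization (it rescales by a unit-modulus constant under change of trivialization, harmlessly, and the statement is only about the line $\langle s\rangle$), and that the formal manipulation $\langle t, s\rangle = \sum_u f_u(0)\langle t, f_u\rangle$ "$=$" $t(0)$ is the correct pairing of $\mathbf{L}_0$ with $\overline{\mathbf{L}_0}$ induced by $h$. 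With that fixed, everything else is the standard fact that the Riesz representative of a linear functional spans the orthogonal complement of its kernel.
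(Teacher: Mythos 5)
Your proposal is correct and follows essentially the same route as the paper: both identify $s$ as the Riesz representative of the evaluation-at-$0$ functional by computing that $\langle t, s\rangle$ equals $t(0)$ (up to conjugation and a trivialization of $\mathbf{L}_0$) for the orthonormal expansion of $t$, so that $s\perp V_0$, with nonvanishing of $s$ supplied by base-point-freeness of $\mathbf{L}$. Your extra remark about the bookkeeping with the line $\mathbf{L}_0$ is a reasonable point of care but does not change the substance.
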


\begin{proof}
  This is actually a simple exercise in linear algebra. Since the $f_u$ are
  linearly independent and $\Thetatwo$ is base point free, it is
  immediate that $s\,\neq\, 0$. Therefore, it suffices to show that
  $s\perp V_0$.  For this, notice that if
  $f\,=\,\sum_u c_u f_u\,\in\, V_0$, then
  $$\langle s,\, f\rangle\,\,=\,\,\sum_u \overline{f_u(0)c_u}\,\,=\,\,\overline{f(0)}\,\,=\,\,0.$$
  This completes the proof.
\end{proof}

\begin{lemma}
  \label{igusa80}
  The $2^g$ sections
  $$\left\{\theta_u(z)= \sum_{\xi \in \mathbb{Z}^g} e^{2\pi \sqrt{-1}(\xi+u)^t\tau(\xi+u)+4\pi \sqrt{-1}(\xi+u)^tz}\right\}_{u\in \frac{2^{-1}\mathbb{Z}^g}{\mathbb{Z}^g}}$$
  have the same norm and form an orthogonal basis of $\mathbb{V}$,
  that is
    $$\langle \theta_u,\,\theta_v\rangle\,\,=\,\,C\delta_{uv}$$
    for some constant $C\,>\,0$.
  \end{lemma}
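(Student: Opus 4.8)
The plan is to identify these $\theta_u$ as the classical second-order theta functions and to exhibit them as pullbacks of coordinate functions under a group-theoretic decomposition of $\mathbb{V}$, thereby reducing the orthogonality statement to the orthogonality of the corresponding translation-eigenspaces.

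First I would observe that $\mathbf L$ descends from a symmetric line bundle $L_0$ on $J^0(X)$ of characteristic $0$ with $L_0^{\otimes 2}\,=\,\mathbf L$, and that multiplication by the group $G$ of $2$-torsion points acts on $\mathbb{V}\,=\,H^0(J^0(X),\mathbf L)$. The Heisenberg group $\mathcal{H}$ of level $2$ acts irreducibly on $\mathbb{V}$ (the Stone--von Neumann theorem), and the $\theta_u$, $u\in\tfrac{1}{2}\mathbb{Z}^g/\mathbb{Z}^g$, are exactly the standard basis adapted to the decomposition $\mathbb{V}\,\cong\,\bigoplus_{u}\mathbb{C}\,\theta_u$ indexed by the characters of the maximal isotropic subgroup $(\tfrac{1}{2}\mathbb{Z}^g/\mathbb{Z}^g)\,\subset\,G$; see \cite[Ch.~I]{Gu3} or \cite{Mu}. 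Concretely, translation $T_\xi^*$ by a half-period $\xi\,=\,\tfrac{1}{2}(\tau p + q)$ sends $\theta_u$ to a scalar multiple of $\theta_{u+p/2}$, so the lines $\langle\theta_u\rangle$ are permuted transitively (up to scalars) by the $2$-torsion translations.

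Next I would use the fact that the $L^2$ Hermitian structure $\langle\cdot,\cdot\rangle$ on $\mathbb{V}$ of \eqref{kf3} is, up to the constant scalar ambiguity, invariant under the action of the Heisenberg group: the K\"ahler form $\omega_0$ is translation-invariant, and each $T_\xi$ with $\xi$ a half-period lifts to a unitary (for a suitably normalized $h$) automorphism of $(\mathbf L,h)$. Since the finite Heisenberg action is unitary and acts transitively on the set of lines $\{\langle\theta_u\rangle\}$, all the $\theta_u$ have the same norm; call the common value $\sqrt{C}$ with $C\,>\,0$. For orthogonality, I would either (i) note that $\langle\theta_u\rangle$ and $\langle\theta_v\rangle$ for $u\,\neq\,v$ are distinct eigenlines for the action of the commutative subgroup of translations by half-periods of the form $\tfrac{1}{2}q$, $q\in\mathbb{Z}^g$ (which act by the characters $\xi\mapsto e^{4\pi\sqrt{-1}u^t\xi}$), and distinct unitary characters force orthogonality; or (ii) compute $\langle\theta_u,\theta_v\rangle$ directly by integrating against the Gaussian-type expression for $|\theta_u|_h^2\omega_0^g$ over a fundamental domain and using the orthogonality of the exponential characters $e^{4\pi\sqrt{-1}(\xi+u)^t x}$ over $\mathbb{R}^g/\mathbb{Z}^g$. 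Approach (i) is cleaner and I would present that one, reducing to the standard fact that eigenvectors of a unitary operator with distinct eigenvalues are orthogonal.

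The main obstacle, I expect, is bookkeeping: one must pin down the correct normalization of $h$ and of the metrized Heisenberg action so that the translation operators are genuinely unitary (not merely unitary up to a scalar that could depend on $\xi$), and one must verify that the $\theta_u$ as written—with the factor $2\pi\sqrt{-1}$ rather than $\pi\sqrt{-1}$ in the exponent—are indeed a basis of $H^0(\mathbf L)$ and not of some translate or tensor power. Once the identification of $\theta_u$ with the canonical level-$2$ basis is made precise and the Heisenberg-invariance of the conformal class of $\langle\cdot,\cdot\rangle$ is recorded, the orthogonality and equality of norms are formal. I would therefore spend the bulk of the written proof on these normalization points and cite \cite{Gu3,Mu} for the representation-theoretic input.
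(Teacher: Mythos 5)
Your argument is correct, but it is worth knowing that the paper does not prove this lemma at all: it simply cites Igusa's book (p.~80), where the statement is established by your option (ii) — a direct computation of $\langle\theta_u,\theta_v\rangle$ as a Gaussian integral over a fundamental domain, with orthogonality of the characters $e^{2\pi\sqrt{-1}(\xi+u-\zeta-v)^t x}$ on $\mathbb{R}^g/\mathbb{Z}^g$ killing the off-diagonal terms and the same integral giving a $u$-independent value of $\|\theta_u\|^2$. Your preferred route (i) is a genuinely different and more conceptual argument, and it does go through; it buys you the result without evaluating any integral, at the price of the normalization bookkeeping you already identify. Two remarks that close that gap cheaply. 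For orthogonality you do not even need the imaginary half-periods: translation by $\tfrac{1}{2}q$ with $q\in\mathbb{Z}^g$ lifts canonically to $\mathbf L$ in the standard trivialization, the weight function $e^{-4\pi(\operatorname{Im}z)^t(\operatorname{Im}\tau)^{-1}(\operatorname{Im}z)}$ is invariant under real translations, so these operators are honestly unitary with no scalar ambiguity, and $\theta_u(z+\tfrac{1}{2}q)=e^{2\pi\sqrt{-1}u^tq}\theta_u(z)$ exhibits the $\theta_u$ as eigenvectors with characters separating the $u$'s. For equality of norms you do need the translations by $\tfrac{1}{2}\tau p$, which are a priori only unitary up to a positive scalar $\lambda_p$; but applying the identity $\langle Ts,Tt\rangle=\lambda_p\langle s,t\rangle$ with $s=t=\theta_u$ and using that $T\theta_u$ is a unimodular-times-positive multiple of $\theta_{u+p/2}$ is circular unless you first normalize the lift so that $\lambda_p=1$, which you can always do since $T^*h$ and $h$ have the same curvature and hence differ by a constant. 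With that normalization fixed once, transitivity of the permutation $u\mapsto u+p/2$ gives the common norm. So your write-up should indeed concentrate, as you propose, on pinning down these lifts; the rest is formal.
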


  \begin{proof}
    See for example \cite[p.~80]{Igusa}.
  \end{proof}

\begin{remark}
  The constant $C$ in Lemma \ref{igusa80} can be computed in terms of
  the period matrix $\tau$. Since the precise value of this constant
  plays no role in our computation, it will be kept as it is.
\end{remark}

\begin{proposition}
  The following holds:
  \begin{equation}\label{eq:pullbackS}
    \sigma_X\ =\ \Omega+\frac{1}{2\sum_{u\in \frac{2^{-1}\mathbb{Z}^g}{\mathbb{Z}^g}} |\theta_u(0)|^2}
    \sum_{i,j=1}^g\sum_{u\in \frac{2^{-1}\mathbb{Z}^g}{\mathbb{Z}^g}}
    \overline{\theta_u(0)}\frac{\partial^2 \theta_u}{\partial z_i \partial z_j}(0)
    p_1^\ast \omega_i\wedge p_2^\ast \omega_j.
  \end{equation}
\end{proposition}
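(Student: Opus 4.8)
The plan is to combine the abstract characterization of $\sigma_X$ from Proposition \ref{prop2} with the explicit generator of $\mathbb S$ coming from Lemmas \ref{linearalgebra} and \ref{igusa80}, and then feed this through the pullback formula \eqref{eq:operativepullback} of Proposition \ref{prop:operativepullback}. First I would apply Lemma \ref{linearalgebra} with the orthogonal basis $\{\theta_u\}$ of $\mathbb V$ furnished by Lemma \ref{igusa80}: after rescaling $\theta_u \mapsto \theta_u/\sqrt C$ to get a unitary basis, Lemma \ref{linearalgebra} tells us that $\mathbb S$ is generated by
$$
f\,:=\,\sum_{u\in \frac{2^{-1}\mathbb Z^g}{\mathbb Z^g}} \overline{\theta_u(0)}\,\theta_u \,\in\, H^0(J^0(X),\,\mathbf L),
$$
the factor $1/C$ cancelling between the conjugated coefficient and the section, so it can be dropped. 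Here I should note that each $\theta_u$ is invariant under $\iota$ (as all sections of $\mathbf L$ are, being second-order theta functions), so $f\in\mathbb S$ is consistent with the description of $\mathbb S$ in Section \ref{section:one}.

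Next I would compute $\Psi(\phi^\ast f)$ using \eqref{eq:operativepullback}: since $\phi^\ast$ and $\Psi$ are linear,
$$
\Psi(\phi^\ast f)\,=\,\sum_u \overline{\theta_u(0)}\left(\theta_u(0)\,\Omega+\frac12\sum_{i,j=1}^g \frac{\partial^2\theta_u}{\partial z_i\partial z_j}(0)\,p_1^\ast\omega_i\wedge p_2^\ast\omega_j\right),
$$
and collecting the $\Omega$-terms gives the coefficient $\sum_u |\theta_u(0)|^2$, which is a positive real number (nonzero because $\mathbf L$ is base-point free, so the $\theta_u$ cannot all vanish at $0$). Dividing $f$ by this positive constant produces the section $\widetilde f\in\mathbb S$ whose pullback $\Psi(\phi^\ast\widetilde f)$ has the shape $\Omega + \frac{1}{2\sum_u|\theta_u(0)|^2}\sum_{i,j}\sum_u \overline{\theta_u(0)}\,\partial_{z_i}\partial_{z_j}\theta_u(0)\,p_1^\ast\omega_i\wedge p_2^\ast\omega_j$, i.e.\ exactly the right-hand side of \eqref{eq:pullbackS}.

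It then remains to argue that this particular element of $\phi^\ast\mathbb S$ is precisely $\sigma_X$, i.e.\ that it has biresidue $1$ along $\Delta$ in the sense of Proposition \ref{prop2}. The key observation is that the canonical bidifferential $\Omega$ of Proposition \ref{canonical:bidifferential} already has biresidue $1$ on the diagonal, while the correction term $\sum_{i,j}c_{ij}\,p_1^\ast\omega_i\wedge p_2^\ast\omega_j$ is a holomorphic section of $K_S$ (it has no pole along $\Delta$ at all), so it does not affect the biresidue. Hence $\Psi(\phi^\ast\widetilde f)$ has biresidue $1$, and under the identification of Remark \ref{rem1a} / Proposition \ref{prop2} it corresponds to the unique section of $\phi^\ast\mathbb S$ whose restriction to $\Delta$ is the constant $1$; by uniqueness this is $\sigma_X$. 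The only mild subtlety here is bookkeeping about which isomorphism $\phi^\ast\mathbf L\cong K_S(2\Delta)$ is being used: one must check that the isomorphism $\Psi$ of Proposition \ref{prop:operativepullback} is compatible with the "biresidue $1$" normalization used to define $\sigma_X$, but this is built into the statement of Proposition \ref{canonical:bidifferential} (where $\Omega$ is said to have biresidue $1$) together with the fact from Proposition \ref{prop2} and Remark \ref{rem1} that "restriction to $\Delta$ equals $1$" is the same normalization as "biresidue $1$". I expect this last compatibility check to be the only real obstacle; the rest is a direct substitution.
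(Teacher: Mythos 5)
Your proposal is correct and follows essentially the same route as the paper: take the generator $s=\sum_u\overline{\theta_u(0)}\,\theta_u$ of $\mathbb S$ from Lemmas \ref{linearalgebra} and \ref{igusa80}, push it through the pullback formula \eqref{eq:operativepullback}, and normalize by $s(0)=\sum_u|\theta_u(0)|^2$ using the fact that $\Omega$ restricts to $1$ on the diagonal while the correction term is holomorphic. The only difference is cosmetic: the paper normalizes the general formula first and then substitutes the explicit generator, whereas you substitute first and normalize afterward.
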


\begin{proof}
  Let $s$ be a generator of $\mathbb{S}$. Then $\sigma_X$ is equal to
  the pullback $\phi^\ast s$ normalized to be $1$ when restricted to
  the diagonal $\Delta\, \subset\, X\times X$. Using equation \eqref{eq:operativepullback} we have
$$\isom(\phi^\ast s)\,\,\,=\,\,\,
s(0)\Omega+\frac{1}{2}\sum_{i,j=1}^g \frac{\partial^2 s}{\partial
  z_i\partial z_j}(0)p_1^\ast \omega_i \wedge p_2^\ast \omega_j.$$
Since $\Omega$ is equal to $1$ when restricted to the diagonal, normalizing, we have
\begin{equation}\label{eq:esseX}
  \sigma_X\,\,=\,\,\Omega+\frac{1}{2s(0)}\sum_{i,j=1}^g \frac{\partial^2 s}{\partial z_i\partial z_j}(0)
  p_1^\ast \omega_i\wedge p_2^\ast \omega_j.
\end{equation}
In view of Lemma \ref{igusa80}, the functions
$
\theta_u/ {\sqrt{C}}$ constitute a unitary basis of $\mathbb{V}$. So by Lemma \ref{linearalgebra} 
the line $\mathbb{S}$ is generated by
\begin{equation}\label{eq:esse}
  s\,\,=\,\, \frac{1}{C}\sum_{u\in \frac{2^{-1}\mathbb{Z}^g}{\mathbb{Z}^g}}\overline{\theta_u(0)}\theta_u.
\end{equation}
Plugging \eqref{eq:esse} into
\eqref{eq:esseX} we conclude that \eqref{eq:pullbackS} holds.
\end{proof}

We now restrict to the special case of $g\,=\,1$.

\begin{theorem}\label{final}
The forms $\sigma_X$ and $\eta_X$ differ for any $[X]$ in a nonempty
open subset, in the complex topology, of the moduli space $M_{1,1}$.
\end{theorem}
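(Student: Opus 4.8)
The plan is to specialize everything to $g=1$ and to compare the two explicit formulas \eqref{eq:pullbackS} and \eqref{eq:etaexplicit}. Write $X_\tau=\mathbb C/(\mathbb Z+\tau\mathbb Z)$ with $\tau\in\mathbb H$, so $\omega_1=dz$ and the period matrix is $\tau$. In the sums of \eqref{eq:pullbackS} and \eqref{eq:etaexplicit} only the index $i=j=1$ occurs and $u$ runs over the two classes of $\frac{2^{-1}\mathbb Z}{\mathbb Z}$; denote by $\theta_0,\theta_{1/2}$ the two corresponding second order theta constants of Lemma \ref{igusa80} and write $u''$ for $\partial^2u/\partial z^2$. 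Subtracting \eqref{eq:etaexplicit} from \eqref{eq:pullbackS} yields
$$
\sigma_{X_\tau}-\eta_{X_\tau}\,\,=\,\,g(\tau)\,\bigl(p_1^\ast\omega_1\wedge p_2^\ast\omega_1\bigr),\qquad
g(\tau)\,\,:=\,\,\frac{\overline{\theta_0(0)}\,\theta_0''(0)+\overline{\theta_{1/2}(0)}\,\theta_{1/2}''(0)}{2\bigl(|\theta_0(0)|^2+|\theta_{1/2}(0)|^2\bigr)}+\frac{\pi}{\operatorname{Im}\tau}.
$$
By base point freeness of $\Thetatwo$ the denominator never vanishes, so $g$ is a well defined real--analytic function on $\mathbb H$ and $\sigma_{X_\tau}=\eta_{X_\tau}$ precisely when $g(\tau)=0$. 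Since $\sigma_X$ and $\eta_X$ depend only on the isomorphism class of $X$, the zero set of $g$ is $\mathrm{SL}(2,\mathbb Z)$--invariant, so it suffices to exhibit a single $\tau$ with $g(\tau)\neq0$.

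I would do this by approaching the cusp, i.e.\ taking $\tau=it$ with $t\to+\infty$. Then $q:=e^{2\pi i\tau}=e^{-2\pi t}\in(0,1)$ and all the quantities involved are real, so the complex conjugations are harmless. Differentiating the series of Lemma \ref{igusa80} termwise gives the $q$--expansions
$$
\theta_0(0)=1+O(q),\ \ \theta_0''(0)=-32\pi^2q+O(q^4),\ \ \theta_{1/2}(0)=2q^{1/4}\bigl(1+O(q^2)\bigr),\ \ \theta_{1/2}''(0)=-8\pi^2q^{1/4}\bigl(1+O(q^2)\bigr),
$$
so the numerator of the first summand of $g$ equals $-16\pi^2q^{1/2}+O(q)$ — the dominant term being $\overline{\theta_{1/2}(0)}\,\theta_{1/2}''(0)$, since $q^{1/4}\cdot q^{1/4}=q^{1/2}\gg q$ — while the denominator equals $2+O(q^{1/2})$. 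Hence the first summand of $g(it)$ is $O(e^{-\pi t})$, so $g(it)=\pi/t+O(e^{-\pi t})>0$ for all $t$ large enough, and in particular $g\not\equiv0$. (For this conclusion one needs only the crude bound ``numerator $=O(e^{-\pi t})$ and denominator bounded away from $0$''.)

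To conclude, since $g$ is continuous, $\{\tau\in\mathbb H:g(\tau)\neq0\}$ is a nonempty open subset of $\mathbb H$ containing a punctured neighbourhood of the cusp; being $\mathrm{SL}(2,\mathbb Z)$--invariant it descends to a nonempty open subset of $M_{1,1}$ on which $\sigma_X\neq\eta_X$, as claimed. (Real--analyticity of $g$ would moreover make this set dense, but that is more than the statement requires.) The only genuine work in this argument is the asymptotic analysis at the cusp — checking that the theta--constant term of $g$ is exponentially small along $\tau=it$ and that no cancellation rescues it — after which the comparison of exponential decay against $1/t$ is immediate; the genus--$1$ specialization of \eqref{eq:pullbackS} and \eqref{eq:etaexplicit} and the descent to $M_{1,1}$ are routine.
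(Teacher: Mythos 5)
Your proposal is correct, but the key non-vanishing step is carried out by a genuinely different route from the paper's. Both arguments start from the same genus-one specialization of \eqref{eq:pullbackS} and \eqref{eq:etaexplicit}, so that $\sigma_{X_\tau}-\eta_{X_\tau}$ is a single real-analytic coefficient times $p_1^\ast\omega_1\wedge p_2^\ast\omega_1$, and both reduce the theorem to showing that this coefficient is not identically zero. The paper then invokes the heat equation $8\pi\sqrt{-1}\,\partial_\tau\theta_u=\partial_z^2\theta_u$ to rewrite the coincidence condition as $4\sqrt{-1}\,y\,w_\tau+w=0$ for $w(\tau)=|\theta_0(0;\tau)|^2+|\theta_{1/2}(0;\tau)|^2$, re-expresses $w$ as a sum over $\mathbb Z^2$, and computes $w_{xx}(\sqrt{-1})<0$ to see that the equation $w_x=0$ fails near $\tau=\sqrt{-1}$; this buys the explicit real-analytic system \eqref{system} for the coincidence locus, which the paper exploits afterwards (e.g.\ to show $\tau=\sqrt{-1}$ is an isolated point of that locus). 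You instead avoid the heat equation entirely and argue by asymptotics at the cusp: along $\tau=it$ all quantities are real, the termwise $q$-expansions you write down are correct (the dominant contribution to the numerator is indeed $\overline{\theta_{1/2}(0)}\,\theta_{1/2}''(0)\sim-16\pi^2q^{1/2}$), so the theta-constant term of your function $g$ is $O(e^{-\pi t})$ while the term $\pi/t$ decays only polynomially, whence $g(it)>0$ for $t$ large. Your observations that the denominator is nonzero by base-point freeness and that the zero set of $g$ is $\mathrm{SL}(2,\mathbb Z)$-invariant (so the nonvanishing locus descends to a nonempty open subset of $M_{1,1}$) are both sound. Your route is more elementary and self-contained; the paper's gives finer information about where the two forms coincide.
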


\begin{proof}
Set $\mathbb H\,\,:=\,\, \{\tau\,\in\, \mathbb C \,\,\big\vert\,\, {\rm
Im}\, \tau \,>\,0\}$. For $\tau \,\in \,\mathbb H$ consider the elliptic
curve $X_\tau\,=\,{\mathbb{C}}/\, ({\mathbb{Z}+\tau\mathbb{Z}})$. Denote
  by $\Omega_\tau$,\, $\eta_\tau$ and $s_\tau$ the forms
  $\Omega_{X_\tau}$,\, $\eta_{X_\tau}$ and $s_{X_\tau}$ respectively.
for $X_\tau$. For $g\,=\,1,$ stressing the dependence on $\tau$, the equations
  \eqref{eq:pullbackS} and \eqref{eq:etaexplicit} become
  \begin{gather*}
    \sigma_\tau\,\,=\,\,\Omega_\tau+\frac{1}{2 w(\tau)} \sum_{u\in \{0, 1/2\}}
    \overline{\theta_u(0;\,\tau)}\, \frac{\partial^2
      \theta_u}{\partial z^2}(0;\tau) p_1^\ast \omega_1\wedge p_2^\ast
    \omega_1
    ,\\
    \eta_\tau\,\,=\,\,\Omega_\tau-\frac{\pi }{{\rm Im}\, \tau} p^\ast
    \omega_1\wedge q^\ast \omega_1,
  \end{gather*}
where
\begin{gather*}
w(\tau)\,\,:=\,\, |\theta_0(0;\,\tau)|^2 + |\theta_{1/2}(0;\,\tau)|^2.
\end{gather*}
By the heat equation for second order theta functions (see \cite{vg}) we
have the following:
  \begin{gather*}
    8\pi \sqrt{-1} \frac{\partial \theta_u}{\partial \tau} \,\,=\,\,
    \frac{\partial^2 \theta_u}{\partial z^2}.
  \end{gather*}
  Hence using the fact that $\theta_u(0;\,\tau) $ is holomorphic in
  $\tau$,
  \begin{gather*}
    \sigma_\tau\,\,=\,\,\Omega_\tau+\frac{4\pi\sqrt{-1}}{ w(\tau)}
    \frac{\partial w} { \partial \tau} (\tau) p_1^\ast \omega_1\wedge
    p_2^\ast \omega_1.
  \end{gather*}
  Therefore, $\sigma_\tau \,=\, \eta_\tau$ if and only if
  \begin{gather*}
    \frac{4\pi\sqrt{-1} }{ w(\tau)} \frac{\partial w} { \partial \tau}
    (\tau) \,\, =\,\, -\frac{\pi}{{\rm Im}\, \tau} .
  \end{gather*}
Setting $\tau \,=\, x + \sqrt{-1} y$, and keeping in mind that $w$
is a real function, we conclude that the following three statements are equivalent:
\begin{enumerate}
\item $\sigma_\tau \,=\, \eta_\tau$,

\item $4\sqrt{-1} y w_\tau + w\,=\, 0$, and

\item the derivatives satisfy
\begin{equation}\label{system}
w_x \,=\,0 \,=\,  2y w_y + w.
\end{equation}
\end{enumerate}
  We are going to express $w$ in a simpler form.  From
Lemma   \ref{igusa80}, for $g\,=\,1$ and $z\,=\,0$,
  \begin{gather}
  \notag  \theta_u(0;\,\tau)\,=\, \sum_{\xi \in \mathbb{Z}} e^{2\pi
      \sqrt{-1}(\xi+u)^2\tau} \qquad
    u\,\in\, \{0,\, 1/2\} ,\\
    \label{series} | \theta_u(0;\,\tau) |^2 \,=\, \sum_{\xi, \zeta \in
      \mathbb{Z}} e^{2\pi \sqrt{-1}(\xi+u)^2\tau -2\pi \sqrt{-1}
      (\zeta + u )^2 \overline{\tau} }.
  \end{gather}
  For $u\,=\,0$, we have
  $\xi^2\tau - \zeta^2 \overline{\tau}\,=\, x( \xi^2 -\zeta^2) +
  \sqrt{-1}y ( \xi^2 +\zeta^2)$.  Set
  \begin{gather*}
    \Gamma \,\,:=\{(m,\,n)\,\in\, {\mathbb Z}^2\,\, \big\vert\,\, m+ n
    \,\in\, 2\mathbb Z\}.
  \end{gather*}
This $\Gamma$ is a subgroup of $\mathbb Z^2$ of index 2.  We use the change
  of variables $ m\,=\, \xi - \zeta$, $n\, =\, \xi + \zeta$ to get an
isomorphism between ${\mathbb Z}^2 $ and $ \Gamma$. With this change
  of variables, the series in \eqref{series} for $u\,=\,0$ becomes
  \begin{gather*}
    |\theta_0(0;\,\tau)|^2 \,\,=\,\, \sum _{(m,n)\in
      \Gamma}e^{2\pi\sqrt{-1} x mn -\pi y (m^2 +n^2)}.
  \end{gather*}
  For $u\,=\,1/2$ there is a similar computation:
  \begin{gather*}
    (\xi+ 1/2)^2\tau - (\zeta+1/2)^2
    \overline{\tau}\,\,=\, \,x ( \xi^2 -\xi +\zeta^2 -\zeta )
    +\sqrt{-1} y ( \xi^2 +\zeta^2 + \xi +\zeta +1/2).
  \end{gather*}
  Substituting $m\,=\, \xi - \zeta$ and $n \,=\, \xi + \zeta +1 $ we
  get the following:
  \begin{gather*}
    |\theta_{1/2}(0;\,\tau)|^2 \,= \,\sum _{(m,n)\in \mathbb Z^2 -
      \Gamma}e^{2\pi\sqrt{-1} xmn -\pi y (m^2 +n^2)},\\
    w(x+\sqrt{-1}y)\,=\, \sum _{(m,n)\in \mathbb Z^2}e^{2\pi\sqrt{-1}
      x mn -\pi y
      (m^2 +n^2)},\\
    w_x(x+\sqrt{-1}) \,=\, \sum _{(m,n)\in \mathbb Z^2} (2\pi
    \sqrt{-1} m n)\cdot e^{2\pi\sqrt{-1} x mn -\pi
      (m^2 +n^2)},\\
    w_{xx}(\sqrt{-1}) \,=\, - \sum _{(m,n)\in \mathbb Z^2} (2\pi m
    n)^2\cdot e^{-\pi (m^2 +n^2)} \,<\, 0.
  \end{gather*}
  This shows that the equation $w_x\,=\,0$ is not satisfied in
  general.
\end{proof}

\begin{remark}
  One can prove that $\sigma_\tau \,=\, \eta_\tau$ for
  $\tau\,=\,\sqrt{-1}$.  Indeed, both forms belong to
  $H^0(S,\, K_S(2\Delta)) $ which is 2-dimensional, and both are
  invariant under $\operatorname{Aut}(X_\tau,+)$. Since this group acts nontrivially on
  $H^0(X_\tau,\, K_{X_\tau})$, we conclude that it also acts nontrivially on
  $H^0(S,\, K_S)$. So
  $\dim H^0(S,\, K_S(2\Delta))^{\operatorname{Aut}(X_\tau)} = 1
  $. Therefore $\sigma_\tau$ and $\eta_\tau$ are proportional.  By the
  normalization of the biresidue they coincide.

The same happens for $\tau\, =\, e^{\frac{\pi}{3} \sqrt{-1}}$. Actually, this
is not surprising. For these elliptic curves the holomorphic 1-form
is not invariant with respect to the automorphisms.  Similarly,
$H^0(X_\tau, 2 K_{X_\tau} )^{\operatorname{Aut}(X_\tau)} \,=\,
0$. Given two projective structures, that are invariant under
$\operatorname{Aut}(X_\tau)$, their difference is an invariant
quadratic differential. This differential vanishes, so the two
projective structures must coincide.

By a computation similar to the one above for $w_{xx}$, one can show
that $\tau \,=\, \sqrt{-1}$ is an isolated point of the locus
  $\{\tau \,\in\, \mathbb H\,\,\big\vert\,\, \sigma_\tau \,=\,
  \eta_\tau\}.$ It would be interesting to understand better this
  locus.  Since $w$ is real analytic, \eqref{system} is a system of
  real analytic equations, so this locus is real analytic in
  $\mathbb H$.
\end{remark}

In conclusion, we compare the projective structures on $X$ induced by
$\eta_X$ and $\sigma_X$.

\begin{definition}
  Let $X$ be a compact Riemann surface. A projective structure on $X$
  subordinate to the complex structure of $X$ is a collection of
  holomorphic coordinate charts $\{(U_i,\,z_i)\}_{i\in I}$ such that
  $\bigcup_{i\in I} U_i\,=\,X$ and all the transition functions
  $z_i\circ z_j^{-1}$ are restrictions of M\"obius transformations on
  each connected component of the domain of $z_i\circ z_j^{-1}$, that
  is
$$z_i\,=\,\frac{a_{ij}z_j+b_{ij}}{c_{ij}z_j+d_{ij}}$$
for constants $a_{ij},\, b_{ij},\, c_{ij},\, d_{ij}$.
\end{definition}

We briefly recall how nonzero global sections of $K_S(2\Delta)$ induce
projective structures on $X$.  Consider the structure sequence of
$3\Delta$ tensored by $K_S(2\Delta)$:
\begin{equation}
  \label{eq:3Delta}
  0\,\longrightarrow\, K_S(-\Delta) \,\longrightarrow\, K_S(2\Delta)\,\xrightarrow{\,\,\,\rho\,\,\,}\,
  K_S(2\Delta)|_{3\Delta}\,\longrightarrow\, 0.
\end{equation}
Take the long exact sequence of cohomologies associated to
\eqref{eq:3Delta}:
\begin{equation}
  \label{eq:restrictionsequence}
  0 \,\longrightarrow\, H^0(S,\, K_S(-\Delta)) \,\longrightarrow\,
  H^0(S,\, K_S(2\Delta))\,\xrightarrow{\,\,\,\rho\,\,\,}\, H^0(K_S(2\Delta)|_{3\Delta})\,\longrightarrow\,  \cdots
\end{equation}
There is a canonical section of $K_S(2\Delta)|_{2\Delta}$, and the
space $\wp(X)$ of all projective structures on $X$ subordinate to the
complex structure of $X$ is in a bijective correspondence with the set
of sections of $H^0(K_S(2\Delta)| _{3\Delta})$ whose restriction to
$2\Delta$ coincides with the canonical section.  This construction is
explained in \cite{BR1,bcfp}. See also \cite{Gu1} and \cite {Tyu} 
for more background on projective structures.

Both $\eta_X$ and $\sigma_X$ have biresidue $1$ along the
diagonal. Coupled with \eqref{ka4} this means that they both restrict
to the canonical section along $2\Delta$. Therefore they induce two
projective structures on $X$ that we denote by $\beta^\eta_X$ and
$\beta^\theta_X$ respectively.

\begin{proposition}
  \label{g1} If $X$ is a compact Riemann surface of genus $1$, then
  $H^0(S,\, K_S(-\Delta)\,=\,0$.  Hence the restriction map
  $\rho\,:\,H^0(K_S(2\Delta)) \,\longrightarrow\,
  H^0(K_S(2\Delta)_{|3\Delta})$ is injective.
\end{proposition}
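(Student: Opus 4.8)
The plan is to reduce the vanishing statement to the triviality of the canonical bundle in genus $1$. First I would observe that for $g=1$ one has $K_X\,\cong\,\mathcal{O}_X$, hence $K_S\,=\,p_1^\ast K_X\otimes p_2^\ast K_X\,\cong\,\mathcal{O}_S$ and therefore $K_S(-\Delta)\,\cong\,\mathcal{O}_S(-\Delta)$, the ideal sheaf of the diagonal divisor $\Delta\,\subset\,S$. A global section of this sheaf is then a global holomorphic function on the compact connected manifold $S\,=\,X\times X$ which vanishes identically along the nonempty divisor $\Delta$; since $H^0(S,\,\mathcal{O}_S)\,=\,\mathbb{C}$, any such function is $0$, so $H^0(S,\,K_S(-\Delta))\,=\,0$.

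An alternative route, which also clarifies what obstructs the argument for higher genus, is to work slicewise: the restriction of $K_S(-\Delta)$ to $X\times\{y\}$ is $K_X\otimes\mathcal{O}_X(-y)$ up to a constant twist by the fiber of $p_2^\ast K_X$ at $y$, a line bundle of degree $2g-3$. For $g=1$ this degree is $-1$, so the restricted bundle has no nonzero sections; hence any section of $K_S(-\Delta)$ restricts to $0$ on every slice $X\times\{y\}$ and is therefore $0$.

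Finally, the injectivity of $\rho$ follows formally: feeding $H^0(S,\,K_S(-\Delta))\,=\,0$ into the long exact sequence \eqref{eq:restrictionsequence} shows that the kernel of $\rho\,:\,H^0(S,\,K_S(2\Delta))\,\longrightarrow\,H^0(K_S(2\Delta)|_{3\Delta})$ is trivial. I do not anticipate any real obstacle here; the statement is elementary once $K_X\,\cong\,\mathcal{O}_X$ is used. The only point worth stating with care is that $\Delta$ is a nonempty effective divisor, so that the nonzero constants fail to lie in $\mathcal{O}_S(-\Delta)$ and thus contribute no global sections.
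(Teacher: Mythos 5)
Your first argument is correct and is essentially the paper's own proof: the paper trivializes $K_S$ by the nowhere-vanishing section $p_1^\ast\omega_1\wedge p_2^\ast\omega_1$ and notes that no nonzero multiple of it can vanish on $\Delta$, which is exactly your observation that $K_S\cong\mathcal{O}_S$ forces $H^0(S,\,K_S(-\Delta))\subset H^0(S,\,\mathcal{O}_S(-\Delta))=0$; the deduction of injectivity of $\rho$ from the long exact sequence is likewise identical. Your slicewise degree count is a valid alternative, but it adds nothing needed here.
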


\begin{proof}
  The space $H^0(X,\, K_X)$ is generated by the nowhere zero section
  $\omega_1$, and $H^0(S,\, K_S)$ is generated by the section
  $p_1^\ast \omega_1\wedge p_2^\ast \omega_1$.  Since this section is
  nowhere zero, the only section of $H^0(S,\, K_S)$ that vanishes on the
  diagonal is $0$, and therefore we have $H^0(S,\, K_S(-\Delta))\,=\,0$. From the
  long exact sequence in \eqref{eq:restrictionsequence} we see that the
  kernel of $\rho$ is $H^0(S,\, K_S(-\Delta))$. Therefore, $\rho$ is
  injective.
\end{proof}

\begin{corollary}\label{cor-d}
  For $g\,=\,1$,
$$\beta^\theta_X\,\,\neq\,\, \beta^\eta_X$$ for all $X$ outside a
proper real analytic subvariety of $M_{1,1}$.
\end{corollary}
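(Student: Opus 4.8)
The plan is to derive the corollary from Theorem \ref{final} together with Proposition \ref{g1}, so that no new computation is needed. The first step is to pin down the precise meaning of ``the projective structures coincide''. By the construction recalled above, a section $\alpha\in H^0(S,\,K_S(2\Delta))$ with biresidue $1$ along $\Delta$ induces the projective structure corresponding, under the bijection between $\wp(X)$ and the set of sections of $K_S(2\Delta)|_{3\Delta}$ extending the canonical section on $2\Delta$, to the image $\rho(\alpha)$, where $\rho$ is the restriction map in \eqref{eq:restrictionsequence}. Hence $\beta^\theta_X=\beta^\eta_X$ exactly when $\rho(\sigma_X)=\rho(\eta_X)$, i.e.\ when $\sigma_X-\eta_X\in\ker\rho=H^0(S,\,K_S(-\Delta))$. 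For $g=1$, Proposition \ref{g1} gives $H^0(S,\,K_S(-\Delta))=0$, so for a genus $1$ surface $X$ we obtain the equivalence
\[
\beta^\theta_X\,=\,\beta^\eta_X\qquad\Longleftrightarrow\qquad \sigma_X\,=\,\eta_X .
\]

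Next I would identify the locus where the two sections agree. Writing $X=X_\tau$ with $\tau=x+\sqrt{-1}\,y\in\mathbb H$, the proof of Theorem \ref{final} shows that $\sigma_\tau=\eta_\tau$ if and only if the system \eqref{system}, namely $w_x=0=2yw_y+w$, holds, where $w$ is the explicit real analytic function appearing there. Thus $\widetilde Z:=\{\tau\in\mathbb H\,:\,\sigma_\tau=\eta_\tau\}$ is a real analytic subset of $\mathbb H$, cut out by real analytic equations, and it is a \emph{proper} subset because the proof of Theorem \ref{final} exhibits a nonempty open set of $\tau$ on which $w_x\neq0$ (for instance near $\tau=\sqrt{-1}$, since $w_{xx}(\sqrt{-1})<0$). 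Because $\sigma_X$ and $\eta_X$ are both intrinsic, membership of $\tau$ in $\widetilde Z$ depends only on the isomorphism class of $X_\tau$; equivalently, $\widetilde Z$ is invariant under the action of $\mathrm{SL}(2,\mathbb Z)$ on $\mathbb H$. Hence $\widetilde Z$ is the preimage of a proper real analytic subvariety $Z\subset M_{1,1}$ under the quotient map $\mathbb H\to M_{1,1}$. Combining this with the equivalence above, we get $\beta^\theta_X\neq\beta^\eta_X$ for every $[X]\in M_{1,1}\setminus Z$, which is exactly the assertion.

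The computational heart of the argument is already contained in the proof of Theorem \ref{final}; the only genuinely new point is the first step, where one passes from equality of projective structures to equality of the inducing sections. This uses the injectivity of $\rho$, i.e.\ the vanishing $H^0(S,\,K_S(-\Delta))=0$ from Proposition \ref{g1}, which holds precisely because $\dim X=1$. It is exactly here that the present proof does not generalize to higher genus: for $g\geq 2$ the kernel of $\rho$ can be nonzero, and a priori two distinct bidifferentials of biresidue $1$ could induce the same projective structure, so one would need an independent argument to separate $\beta^\theta_X$ from $\beta^\eta_X$.
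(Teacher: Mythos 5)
Your proof is correct and follows essentially the same route as the paper: injectivity of $\rho$ from Proposition \ref{g1} reduces equality of the projective structures to the equality $\sigma_X=\eta_X$, and the real analytic description of the coincidence locus is read off from the proof of Theorem \ref{final}. Your extra remarks (identifying $\ker\rho$ explicitly and checking $\mathrm{SL}(2,\mathbb Z)$-invariance so that the locus descends to $M_{1,1}$) only make explicit steps the paper leaves implicit.
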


\begin{proof}
The projective structures $\beta^\eta$ and $\beta^\theta$ coincide
if and only if $\rho(\eta_X)\,=\,\rho(\sigma_X)$. For genus $1$, the
map $\rho$ is injective, therefore we have
$\beta^\eta_X\,=\,\beta^\theta_X$ if and only if $
\eta_X\,=\,\sigma_X$. By Theorem \ref{final} this is the case
for all $[X]$ in a proper real analytic subvariety of $M_{1,1}$.
\end{proof}

\section*{Acknowledgements}

We are very grateful to the two referees for their helpful comments to improve the
exposition and also for giving some better proofs and pointing out references.
The second author would like to thank
Elisabetta Colombo, Paola Frediani and Gian Pietro Pirola for
interesting discussions related to the subject of this paper.  In
particular he would like to thank Elisabetta Colombo for the crucial
suggestion that $\eta_X\, \neq\, \sigma_X$. The second and third authors
would like to thank Emanuele Dolera for help with special values of
the Jacobi theta function.  The first author is partially supported by
a J. C. Bose Fellowship (JBR/2023/000003). The second and third
authors were partially supported by INdAM-GNSAGA, by MIUR PRIN 2022:
20228JRCYB, ``Moduli spaces and special varieties'' and by FAR 2016
(Pavia) ``Variet\`a algebriche, calcolo algebrico, grafi orientati e
topologici''.

\end{document}